\documentclass[a4paper,12pt, leqno]{article}

\usepackage[utf8]{inputenc}
\usepackage{graphicx}
\usepackage{xcolor}
\usepackage{tikz}
\tikzset{
    use page relative coordinates/.style={
        shift={(current page.south west)},
        x={(current page.south east)},
        y={(current page.north west)}
    },
}
\usetikzlibrary{decorations.pathreplacing,angles,quotes}

\usepackage{amsmath, amssymb, amsthm, amstext, bm, mathtools}
\usepackage{esint}
\usepackage{physics}
\usepackage{bigints}

\usepackage{etoolbox}
\usepackage{comment}
\usepackage{authblk}
\usepackage{arydshln}
\usepackage{enumerate}

\usepackage{biblatex}
\renewbibmacro{in:}{}
\DeclareFieldFormat[article]{journaltitle}{\mkbibemph{\textup{#1}}}
\DeclareFieldFormat[article]{title}{#1}

\usepackage{hyperref}
\hypersetup{
    pdfborder={0 0 1},
    linktocpage=true
}

\theoremstyle{plain}
\newtheorem{theorem}{Theorem}[section]
\newtheorem{lemma}[theorem]{Lemma}

\newtheorem{proposition}[theorem]{Proposition}

\newtheorem{remark}[theorem]{Remark}
\newtheorem{assumption}[theorem]{Assumption}
\numberwithin{equation}{section}

\usepackage{cleveref}



\title{Doubling Argument of the Hessian Estimate for the Hessian Quotient Equations}
\author{Cheuk Yan Fung}
\date{June 2026}

\begin{document}
\maketitle
\begin{abstract}
In this paper, we establish a doubling argument to obtain Hessian estimates for convex solutions to the Hessian quotient equation $\frac{\sigma_n}{\sigma_k}(D^2u) = f(x,u,Du)$ for $k=n-1$ and $k=n-2$ under the condition that $\log f$ is convex in the $Du$ variable. In particular, our approach is pointwise and does not make use of the Legendre transform or integral-based local maximum principles. We provide a counterexample demonstrating that interior estimates can fail if no structural assumption is imposed on $f$ in the $Du$ variable. Finally, we extend our doubling argument to general Hessian quotient equations $\frac{\sigma_l}{\sigma_k}(D^2u) = f(x,u,Du)$ for $k \in \{l-1, l-2\}$, under a similar structural condition imposed on $f$ in the $Du$ variable, alongside an additional structural concavity assumption on the operator introduced in \cite{LuTsai_general_quotient}, which has very recently been established in independent works.
\end{abstract}

\section{Introduction}
\label{s1}

Bombieri–De Giorgi–Miranda \cite{Bombieri1969} established an a priori gradient estimate for graphical minimal hypersurfaces, which was later simplified by Trudinger \cite{Trudinger1972}. The key ingredients were the Jacobi inequality $\Delta a \geq |\nabla a|^2$ and the Michael-Simon mean value inequality \cite{MichaelSimon1973}. Korevaar \cite{Korevaar1987} subsequently established a pointwise argument for this gradient estimate without relying on the Michael-Simon inequality. 

The Hessian estimate for the special Lagrangian equation can be viewed as a particular case of the gradient estimate for minimal surface systems, when the minimal surface is represented as a gradient graph. Chen-Warren-Yuan~\cite{Chen2009}, Wang-Yuan~\cite{WangYu2014SLE}, and Zhou~\cite{Zhou2021} established integral proofs for the Hessian estimates of the special Lagrangian equation, relying on the Michael-Simon mean value inequality and the Jacobi inequality. Recently, Shankar~\cite{Shankar2024SLE} and Fung \cite{fung} established pointwise, doubling arguments to obtain Hessian estimates for the special Lagrangian equation in the settings considered by Chen-Warren-Yuan~\cite{Chen2009}, Wang-Yuan~\cite{WangYu2014SLE}, and Zhou~\cite{Zhou2021}. This approach is entirely pointwise and bypasses the Michael-Simon mean value inequality required in prior proofs. 

Several works have constructed doubling inequalities to establish Hessian estimates and interior regularity results. For the $\sigma_2$ equation in dimension $n=3$, Qiu \cite{Qiu2024} derived the Hessian estimate. A major breakthrough in dimension $n=4$ was achieved by Shankar-Yuan \cite{ShankarYuansigma2dim4}. They established the Hessian estimate for $\sigma_2=1$ by pioneering the framework for the doubling argument, which is now widely adapted to other equations. The estimate also holds in $n\ge 5$ for solutions satisfying a dynamic semi-convexity condition. Subsequently, Fan \cite{Fan2026Savin} established the Hessian estimate with a variable right hand side $\sigma_2=f(x,u,Du)$ when $n=4$ by constructing a generalized Savin theorem \cite{fan2026hessian}. Furthermore, Shankar-Yuan \cite{ShankarYuan2024} proved the interior regularity of strictly convex solutions to the Monge-Amp\`ere equation. As mentioned previously, Shankar \cite{Shankar2024SLE} and Fung \cite{fung} constructed doubling arguments for the special Lagrangian equation in various settings. Additionally, Bhattacharya-Shankar-Wall \cite{BSW25Lagrangian_Largangian_mean_curvature} established the Hessian estimate for the two-dimensional Lagrangian mean curvature equation. Recently, Jiao-Sui \cite{JiaoSui2026} utilized the Lagrange multiplier method to establish doubling inequalities and Hessian estimates for $\frac{\sigma_2}{\sigma_1}(D^2u) = f(x,u)$ for 2-convex solutions in dimension $3$ and for semi-convex solutions when $n \ge 4$. The method also applies to $\sigma_2(D^2u) = f(x,u,Du)$ in dimension $4$. Very recently, Fan-Shankar \cite{FanShankar_scalar_curvature_equation} established interior curvature estimates for the scalar curvature
equation in dimension 4.

Recently, making use of the concavity inequalities established by Guan and Sroka \cite{GuanSroka2026}, Lu \cite{Lu2025_top_quotient} established interior estimates for $\frac{\sigma_n}{\sigma_k}(D^2u) = f(x,u)$ for $k \in \{n-1,n-2\}$ with $k\ge 1$. He also constructed explicit counterexamples demonstrating that such interior Hessian estimates fail when $k \le n-3$. Beyond the top quotient case, Lu and Tsai \cite{LuTsai_general_quotient} recently established the Hessian estimate for convex solutions to the general Hessian quotient equation $\frac{\sigma_l}{\sigma_k}(D^2u) = f(x,u)$ for $k= l-1$ or $l-2$ with $k\ge 1$ under an analogous concavity assumption on the operator $\frac{\sigma_l}{\sigma_k}(D^2u)$, motivated by the concavity inequalities of Guan and Sroka \cite{GuanSroka2026}. 
\begin{assumption}
[\cite{LuTsai_general_quotient} Assumption 1.1]
\label{assumption1.1}
Let $n \ge 2$, $1 \le k < l \le n$ with $k = l-1$ or $ l-2$, and let $F = \frac{\sigma_l}{\sigma_k}$. Suppose $\lambda_1 \ge \dots \ge \lambda_n > 0$. Then for any $\xi \in \mathbb{R}^n$, there exist constants $K_0 \ge 1$ and $K_1 \ge 1$ depending only on $n$, $l$, and $F$, such that whenever $\lambda_1 \ge K_0$, we have
\begin{equation*}
-\sum_{i,j}F^{ii,jj}\xi_i\xi_j + \sum_{\lambda_i \ne \lambda_1}\frac{2F^{ii}\xi_i^2}{\lambda_1-\lambda_i} + \frac{K_1}{F}\left(\sum_i F^{ii}\xi_i\right)^2 \ge (1+c(n,l))\frac{F^{11}\xi_1^2}{\lambda_1},
\end{equation*}
where $c(n,l)$ is a positive constant depending only on $n$ and $l$.
\end{assumption}

\begin{remark}
\label{rem:assumption_verification}
After the completion of this work, the concavity inequality in Assumption \ref{assumption1.1} has been independently established by Tsai \cite{Tsai2026concavity} for the case $k = l-1$ for convex solutions (their result shows that $K_1$ can be chosen as $1$), Li and Wu \cite{LiWu2026concavity} for both $k = l-1$ and $k = l-2$ for convex solutions, and Dong and Zhang \cite{DongZhang2026concavity} for $k = l-1$ and $k = l-2$ for admissible semiconvex solutions. 

Although Assumption \ref{assumption1.1} can now be removed in view of these results, we retain it here to keep the presentation of the proof unchanged.
\end{remark}

Lu \cite{Lu2025_top_quotient} and Lu-Tsai \cite{LuTsai_general_quotient} constructed Jacobi inequalities in the viscosity sense. Then they applied the Legendre transform to the solution, transforming the operator into a uniformly elliptic one, which allows the use of integral tools such as the local maximum principle. They then utilized integral techniques to establish the estimates.

A natural question is whether the pointwise, doubling approach is applicable to this case. A useful ingredient for the doubling argument, the Jacobi inequality, is available in this setting; indeed, Shankar \cite{Shankar2025Singularities} noted that possessing a Jacobi inequality usually implies a doubling inequality can be deduced. In this paper, we answer this question affirmatively. By adapting the pointwise doubling method, we establish Hessian estimates for the Hessian quotient equation without relying on the Legendre transform or integral estimates. Specifically, for the top quotient case, we consider the equation of the form
\begin{equation}
\label{main_equation_1}
F(D^2u)= \frac{\sigma_n}{\sigma_k}(D^2u) = f(x,u,Du)
\end{equation}
for $k=n-1,n-2$, under the structural condition that $\log f$ is convex with respect to the gradient variable $Du$.

Let us briefly review the outline for the doubling argument pioneered by Shankar and Yuan \cite{ShankarYuansigma2dim4}. The first step is to prove an Alexandrov-type theorem. Next, the uniform norm of the solution is flattened so that Savin's small perturbation theorem (\cite{Savin2007}, Theorem 1.3) can be applied to obtain Hessian estimates in a small ball. Finally, by employing the Jacobi inequality, one can derive the doubling inequality, which allows the Hessian estimate in a larger ball to be controlled by the estimate in the smaller ball.

In this paper, we adapt their framework to the Hessian quotient equation. We first establish the Jacobi inequality in the classical sense. Next, we establish a doubling inequality. Finally, we apply the generalized Savin theorem established by Fan \cite{Fan2026Savin} to conclude the proof. The main contributions of this paper are the classical Jacobi inequality and the doubling inequality for the Hessian quotient equation. 

We modify the arguments of Lu \cite{Lu2025_top_quotient} to establish the Jacobi inequality in the classical sense, which will be used in the maximum principle argument in Section \ref{s4}. Establishing this inequality in the classical setting introduces additional third order terms that naturally vanish in the viscosity setting. To absorb these terms when they have negative coefficients,  we must proceed case by case based on the multiplicity $m$, which in turn requires the maximal eigenvalue $\lambda$ to be sufficiently large.

We establish the doubling inequality for the Hessian quotient equation using the test function constructed by Shankar \cite{Shankar2024SLE}. By evaluating this test function at its maximum point, we obtain a maximum principle inequality \eqref{max_prin_inequality}. For the subcase $k=n-1$, we make use of the fact that the trace of the linearized operator, $\sum_i F^{ii}$, is uniformly bounded from above to obtain the desired estimate. However, in the subcase $k=n-2$, the trace is no longer bounded from above. To overcome this, we bound the right hand side of the inequality from below by a multiple of $|x-y|^2 \sum_i F^{ii}$. By dividing the entire inequality by the trace and applying a lower bound on the trace, we obtain the doubling inequality.

\begin{theorem}
\label{main_thm}
Let $n \ge 2$, $k \in \{n-1, n-2\}$ with $k \ge 1$, and let $f(x,u,Du)$ be a positive $C^{1,1}$ function such that $\log f$ is convex in the $Du$ variable. Suppose $u \in C^4(B_{2}(0))$ is a convex solution of \eqref{main_equation_1} on $B_2(0)$. Then we have 
\begin{equation}
|D^2u(0)| \le C\left(n,k,\|u\|_{C^{0,1}(B_2(0))}, \left\|\frac{1}{f}\right\|_{L^{\infty}}, \|f\|_{C^{1,1}}\right).
\end{equation}
\end{theorem}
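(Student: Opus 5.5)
We follow the Shankar–Yuan doubling framework in three stages: a classical Jacobi inequality, a doubling inequality, and closure via Fan's generalized Savin theorem.

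\textbf{Step 1: classical Jacobi inequality.} Let $\lambda_1\ge\dots\ge\lambda_n>0$ be the eigenvalues of $D^2u$, $b=\log\lambda_1$, and $F^{ij}$ the linearized operator. We aim to show
\[
F^{ij}b_{ij}\ge \varepsilon F^{ij}b_ib_j - C\Big(1+\sum_i F^{ii}\Big)
\]
wherever $\lambda_1\ge K$. Since $\lambda_1$ need not be smooth, we work with a smooth quantity near the top eigenvalue, for instance
\[
b=\frac1m\log\sum_{\alpha\le m}\lambda_\alpha
\]
when $\lambda_1$ has multiplicity $m$ at the point considered. Differentiating \eqref{main_equation_1} twice in the direction $e_1$ gives
\[
F^{ij}u_{ij11}=-F^{ij,rs}u_{ij1}u_{rs1}+\partial_{11}\big(f(x,u,Du)\big).
\]
\begin{itemize}
\item The term $\partial_{11} f$ contains $f_{p_ap_b}u_{a1}u_{b1}$, which is of order $\lambda_1^2$. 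Writing $f=e^{g}$ with $g=\log f$ convex in $p$, this becomes $f(g_{p_ap_b}+g_{p_a}g_{p_b})u_{a1}u_{b1}\ge0$, so it has a good sign.
\item The mixed terms $g_{x_ap}u_{11}$ and $g_p u_{111}$ are handled separately. The $g_pu_{111}$ term is absorbed by $\varepsilon F^{11}b_1^2$ plus a constant; this is the step that requires $k\in\{n-1,n-2\}$, so that $F^{11}\lambda_1^2$ is bounded below.
\end{itemize}
For the third-order terms, we use the concavity inequalities of Guan–Sroka in the top-quotient form used by Lu; this is exactly the inequality of Assumption \ref{assumption1.1} with $l=n$. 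They control $-F^{ij,rs}u_{ij1}u_{rs1}$ together with the cross terms $2F^{ii}u_{1i1}^2/(\lambda_1-\lambda_i)$.

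The new difficulty compared with Lu's viscosity argument is the extra third-order terms $u_{1\alpha i}$, $\alpha\le m$, coming from eigenvalue multiplicity. We handle these case by case in $m$. Where they carry negative coefficients, we absorb them using $\lambda_1\ge K$ large.

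\textbf{Step 2: doubling inequality.} We use Shankar's test function on $B_{1}$, of the form
\[
P(x,y)=\eta(x)\,b(x)+\text{(a gradient/quadratic correction in }x-y)\quad\text{or}\quad \max_{|x|\le r}b\ \text{vs.}\ \max_{|x|\le 2r}b,
\]
with a quadratic cutoff $\eta$. We argue by contradiction: suppose $\max_{B_{2r}}b$ is much larger than $\max_{B_r}b$. At an interior maximum point, the Jacobi inequality combined with $DP=0$ gives the maximum-principle inequality. In it, the cutoff terms appear with $\sum_iF^{ii}$, and the $\varepsilon F^{ij}b_ib_j$ term absorbs the cross terms $F^{ij}\eta_ib_j$.
\begin{itemize}
\item For $k=n-1$, $\sum_i F^{ii}$ is bounded above and below, which yields $\sup_{B_{r}}b\le C+C\sup_{B_{r/2}}b$ (up to $\log$).
\item For $k=n-2$, the trace is unbounded above. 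We bound the good side from below by $c|x-y|^2\sum_iF^{ii}$, using $F^{ij}u_{ij}$ and the uniform convexity induced by the test function. We then divide by $\sum_iF^{ii}$, which is bounded below by a dimensional constant since $F$ is concave and homogeneous of degree $n-k$ with $f$ bounded below.
\end{itemize}
This second case is the step we expect to be the main obstacle: finding the correct lower bound proportional to the trace.

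\textbf{Step 3: closure.} By convexity and the $C^{0,1}$ bound, an Alexandrov-type argument gives a point near $0$ at which $u$ is twice differentiable. Rescaling $u_\rho(x)=\rho^{-2}\big(u(\rho x)-\text{affine}\big)$ flattens $u$ to within $\delta$ of a quadratic. Note that $f$ rescales with the same structure and smallness of derivatives.

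Fan's generalized Savin small-perturbation theorem then gives $C^{2,\alpha}$, hence Hessian, bounds on a tiny ball of radius depending only on the data. Iterating the doubling inequality finitely many times, with a number of iterations determined by the data, propagates this bound to $B_{1/2}(0)$ and yields $|D^2u(0)|\le C$.
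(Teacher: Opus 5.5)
The proposal has a genuine gap in how it controls the third-order term coming from the $Du$-dependence of $f$, plus a uniformity gap in the closing step.

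\textbf{The third-order term cannot be absorbed in the Jacobi inequality.} After differentiating twice and dividing by $\lambda_1$, you get $\sum_i f_{p_i}u_{11i}/\lambda_1=\sum_i f_{p_i}b_i$, and you propose to absorb it into $\varepsilon F^{ii}b_i^2$ plus a constant. Young's inequality gives $|f_{p_1}b_1|\le \varepsilon F^{11}b_1^2+f_{p_1}^2/(4\varepsilon F^{11})$. For $k=n-1$ we have $F^{11}=f^2/\lambda_1^2$, so the error is of order $\lambda_1^2$, not $O(1)$. The lower bound $F^{11}\lambda_1^2\ge c$ you invoke only gives $1/F^{11}\le C\lambda_1^2$. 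For $k=n-2$, $F^{11}=\frac{f^2}{\lambda_1^2}\sum_{b\neq 1}\lambda_b^{-1}$ still decays like $\lambda_1^{-2}$ when the other eigenvalues stay bounded. The same obstruction occurs in every direction $i$ with $\lambda_i$ large.

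\textbf{What the paper does instead.} It keeps $\sum_i f_{p_i}(b_m)_i$ in the Jacobi inequality and removes it in the doubling step by exact cancellation. At the maximum of $a_m\eta$, the relation $(a_m)_i=-a_m\eta_i/\eta$ turns the term into $h\sum_i f_{p_i}\varphi_i$. Shankar's cutoff is $\eta=(e^{(1-\varphi)/h}-1)_+$ with $\varphi=(x-y)\cdot Du-u+u(y)+\frac t2|x-y|^2$. Using $\sum_iF^{ii}u_{iij}=\frac{d}{dx_j}f$, the quantity $h\sum_iF^{ii}\varphi_{ii}$ contains exactly $h\sum_i f_{p_i}u_{ii}(x_i-y_i)$, so the two cancel up to bounded terms. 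Absorbing $h\sum_i f_{p_i}\varphi_i$ into the good term $\sum_iF^{ii}\varphi_i^2$ instead fails for the same reason, since $1/F^{ii}\sim\lambda_i^2$. If your cutoff does not involve $Du$, the good term degenerates to roughly $\sum_iF^{ii}(x_i-y_i)^2$ in the large-eigenvalue directions, and no doubling follows. Your Step 2 leaves the test function unspecified, so this cancellation, the key observation for $f$ depending on $Du$, is missing.

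\textbf{Role of log-convexity.} It is needed not merely to make $f_{p_ap_b}u_{a1}u_{b1}\ge 0$; convexity of $f$ would suffice for that. Its real job is to cancel the Guan--Sroka term $-\frac1F\big(\sum_iF^{ii}u_{ii1}\big)^2=-\frac1f\big(\frac{df}{dx_1}\big)^2$. The leading part of that term, $-\frac{f_{p_1}^2}{f}\lambda_1^2$, must be matched by $f_{p_1p_1}\lambda_1^2$, so you need to track it explicitly.

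\textbf{Uniformity in Step 3.} For a single $u$, Alexandrov's theorem gives a point of twice differentiability. However, the scale at which $u$ is $\delta$-close to its quadratic approximation depends on $u$ itself, not only on the data. So you cannot directly get a ball ``of radius depending only on the data.'' The paper gets uniformity by contradiction and compactness:
\begin{enumerate}[(i)]
\item take $u_j,f_j$ with bounded data and $|D^2u_j(0)|\to\infty$;
\item extract a uniformly convergent subsequence with convex limit, and apply Alexandrov to the limit;
\item apply Fan's theorem to the rescalings of $u_j$ for large $j$, using Lemma \ref{lu_eigenvalue_bounds} to keep $\sigma_k(D^2Q)$ away from zero so the rescaled operator is uniformly elliptic near the origin.
\end{enumerate}

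\textbf{Minor point.} Your $\frac1m\log\sum_{\alpha\le m}\lambda_\alpha$ does not equal $\log\lambda_1$ at the point, so it does not touch $\log\lambda_1$ from below. Use $\log\big(\frac1m\sum_{\alpha\le m}\lambda_\alpha\big)$ or, as the paper does, $\frac1m\sum_{\alpha\le m}\log\lambda_\alpha$.
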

We remark that Guan-Qiu \cite{GuanQiu2019} (under the assumption $\sigma_3(D^2u) \ge -A$) and Fan \cite{fan2026hessian} established the interior Hessian estimate for $\sigma_2(D^2u) = f(x,u,Du)$ without imposing structural conditions on $f$ with respect to the gradient variable $Du$. This is due to the properties of the $\sigma_2$ operator. Specifically, by applying the Newton-Maclaurin inequalities, we have $\sum_i \sigma_2^{ii}u_{ii}^2 = \sigma_1 \sigma_2 - 3 \sigma_3 \ge C(n) \sigma_1 \sigma_2 \ge C \sigma_1$. This term can be used to absorb the negative $\lambda$ terms. In our case, taking $\sigma_n / \sigma_{n-1}$ as an example, we have $F^{ii}u_{ii}^2 = F^2$, which is not helpful for absorbing negative $\lambda$ terms. If we do not impose the condition that $\log f$ is convex in $Du$ and instead only use the bounds on the first and second partial derivatives of $f$ in \eqref{jacobi_final}, it would generate negative $\lambda$ terms that cannot be absorbed. 

We demonstrate that imposing structural conditions on $f$ with respect to the gradient variable $Du$ is necessary by constructing the following explicit counterexample. We first consider the simplest case, $\sigma_n/\sigma_{n-1}(D^2u) = f(Du)$. We consider the dual function $w(y)$ obtained via the Legendre transform of $u(x)$. To ensure that the second derivatives of $u$ blow up at the origin, we require the Hessian of $w$ to degenerate at the origin. Under the Legendre transform, the equation simplifies to $\Delta w(y) = 1/f(y)$, where $y = Du$. Motivated by an earlier, more restrictive structural assumption in Theorem \ref{main_thm} that $1/f$ is concave in $Du$, we construct a specific dual function $w$ whose Laplacian is strictly convex in $y$. We verify that this function $w$ forms a valid counterexample and explicitly solve for $u(x)$. Finally, we demonstrate that this exact construction also serves as a valid counterexample for the general quotient $\sigma_l/\sigma_k(D^2u) = f(Du)$ for $k \in \{l-1,l-2\}$.

\begin{remark}
\label{thm:counterexample}
Let $n \ge 2$, $1 \le k < l \le n$ with $k \in \{l-1, l-2\}$. Then there exists a sequence of smooth, convex functions $u_\varepsilon \in C^\infty(B_1(0))$ and positive smooth functions $f_\varepsilon(Du)$ such that 
\begin{equation}
\label{counter_eq}
    \frac{\sigma_l}{\sigma_{k}}(D^2u_\varepsilon) = f_\varepsilon(Du_\varepsilon) \quad \text{in } B_1(0),
\end{equation}
with the following properties as $\varepsilon \to 0$.
\begin{enumerate}[(i)]
    \item The sequence $u_\varepsilon$ is uniformly bounded in $C^{0,1}(B_1(0))$.
    \item The functions $f_\varepsilon$ are uniformly bounded in $C^{1,1}$ with respect to $Du$, and there exist uniform constants such that $0 < \inf_{\varepsilon,Du} f_\varepsilon(Du) \le f_\varepsilon \le C$.
    \item The second derivatives blow up at the origin $|D^2u_\varepsilon(0)| \to \infty$.
\end{enumerate}
\end{remark}

We also establish a doubling argument for strictly convex solutions to the general Hessian quotient equation
\begin{equation}
\label{main_equation_2}
F(D^2u) = \frac{\sigma_l}{\sigma_k}(D^2u) = f(x,u,Du),
\end{equation}
where $k = l-1$ or $l-2$, under Assumption \ref{assumption1.1} established by Lu and Tsai \cite{LuTsai_general_quotient} and a structural inequality imposed on $f$ in the $Du$ variable.

We face additional difficulties in deriving the Jacobi inequality compared to the top quotient case. The coefficients of certain third order terms are complicated, and we introduce auxiliary quantities to simplify them. Furthermore, the coefficients of certain third order terms depend on potentially unbounded eigenvalues $\lambda_{m+1}, \dots, \lambda_{l-1}$. We cannot take $\lambda \rightarrow \infty$ at the beginning because the choice of $\lambda$ might depend on these unbounded terms. We overcome this by factoring out those eigenvalues. 

For the doubling inequality in the general quotient case, we rely on the estimates established by Lu and Tsai \cite{LuTsai_general_quotient}. Specifically, we utilize the property that the trace $\sum_i F^{ii} \le C$ for the subcase $k=l-1$, and that the quantity $F^{ii}(1+u_{ii})^2$ is bounded from both above and below by a constant multiple of $\lambda_{l-1}$ for the subcase $k=l-2$. By combining these bounds with the approach developed for the top quotient, we derive the desired doubling inequality.

\begin{theorem}
\label{main_thm_general_l_k}
Let $n \ge 2$, $1 \le k < l \le n$ with $k \in \{l-1, l-2\}$, and let $f(x,u,Du)$ be a positive $C^{1,1}$ function satisfying the structural inequality $\sum_{i,j} f_{p_i p_j} \xi_i \xi_j \ge \frac{K_1}{f} \left(\sum_i f_{p_i} \xi_i\right)^2$ for all $\xi \in \mathbb{R}^n$. Suppose Assumption \ref{assumption1.1} holds and $u \in C^4(B_{2}(0))$ is a strictly convex solution to \eqref{main_equation_2} on $B_2(0)$. Then we have 
\begin{equation}
|D^2u(0)| \le C\left(n, l, \|u\|_{C^{0,1}(B_2(0))}, \left\|\frac{1}{f}\right\|_{L^{\infty}}, \|f\|_{C^{1,1}}\right).
\end{equation}
\end{theorem}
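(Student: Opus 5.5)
The argument follows the Shankar--Yuan doubling framework, in the form used for Theorem \ref{main_thm}, in three steps: a classical Jacobi inequality, a doubling inequality, and a small-perturbation base case.

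\textbf{Step 1 (Jacobi inequality).} Let $\lambda_{\max}$ be the largest eigenvalue of $D^2u$ and set $b=\log \lambda_{\max}$, or a smooth symmetric function of the top $m$ eigenvalues when $\lambda_1$ has multiplicity $m$. We aim to show that, wherever $\lambda_{\max}\ge K$ for a large $K$,
\[
F^{ij}b_{ij}\ge c\,F^{ij}b_ib_j - C\Big(1+\sum_i F^{ii}\Big).
\]
Differentiating \eqref{main_equation_2} twice in the $e_1$ direction gives
\[
F^{ii}u_{11ii}=-F^{ij,rs}u_{ij1}u_{rs1}+D_{11}f .
\]
The third-order terms $F^{ii,jj}u_{ii1}u_{jj1}$ and the perturbation terms $\sum 2F^{ii}u_{1i1}^2/(\lambda_1-\lambda_i)$ are handled by Assumption \ref{assumption1.1} with $\xi_i=u_{ii1}$. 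This leaves a deficit $-\frac{K_1}{F}\big(\sum_i F^{ii}u_{ii1}\big)^2$. By the once-differentiated equation, $\sum_i F^{ii}u_{ii1}=D_1f$, and $D_1f$ contains $\sum_j f_{p_j}u_{j1}$. The deficit is therefore at most
\[
\frac{K_1}{f}\Big(\sum_j f_{p_j}u_{j1}\Big)^2+C\lambda_1 .
\]
On the other side, $D_{11}f$ contains $f_{p_ip_j}u_{i1}u_{j1}$, and the structural inequality with $\xi=Du_1$ makes this quadratic term dominate the deficit exactly. The remaining first-order contributions in $D_{11}f$ are $f_{p_j}u_{j11}$; these are absorbed by choosing the test function $b$ with a correction $+A\,\cdot(\text{linear in }Du)$ or $+\tfrac{A}{2}|x|^2$, or by using the critical-point relation later.

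The extra third-order terms that vanish in the viscosity setting are $\sum_{i\le m<j}$ cross terms and terms involving the unbounded $\lambda_{m+1},\dots,\lambda_{l-1}$. We will treat them case by case in $m$, as in the top quotient case. We factor out $\lambda_{m+1}\cdots\lambda_{l-1}$ from their coefficients before choosing $K$ large, so that the threshold $K$ does not depend on the unbounded eigenvalues. Strict convexity lets us divide by those eigenvalues freely. \emph{I expect this step to be the main obstacle:} controlling the mixed third-order terms uniformly in $m$ while keeping the $1+c(n,l)$ margin of Assumption \ref{assumption1.1}.

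\textbf{Step 2 (doubling inequality).} We use Shankar's test function
\[
P(x,y)=\rho(x)\,b(x)+\tfrac{\epsilon}{2}|x-y|^2\ \text{-type}
\]
on $\overline{B_r}\times\overline{B_r}$. At an interior maximum we apply the Jacobi inequality together with the max-principle inequality, which gives a bound of the form
\[
\sum_i F^{ii}\,\big|\text{gradient terms}\big|\lesssim |x-y|^2\sum_i F^{ii}+C.
\]
For $k=l-1$ we use $\sum_i F^{ii}\le C$ from Lu--Tsai \cite{LuTsai_general_quotient}. For $k=l-2$, the trace is unbounded, so we instead use the two-sided bound $F^{ii}(1+u_{ii})^2\sim\lambda_{l-1}$ to bound the right side below by $c|x-y|^2\sum F^{ii}$, then divide by the trace, which is bounded below. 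In both cases this yields
\[
\sup_{B_{r}}b\le C+\sup_{B_{r/2}}b .
\]

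\textbf{Step 3 (base case and conclusion).} By convexity and the Lipschitz bound, after rescaling we flatten $u$ so that it is uniformly close to a quadratic in a small ball (the Alexandrov-type step). Fan's generalized Savin theorem \cite{Fan2026Savin} then gives a $C^{2,\alpha}$ bound on a small ball. Iterating the doubling inequality finitely many times, with the number of iterations depending only on the data, gives the bound on $|D^2u(0)|$.
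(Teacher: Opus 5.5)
Your overall plan matches the paper's: a classical Jacobi inequality from Assumption~\ref{assumption1.1}, the structural inequality cancelling the $\frac{K_1}{f}(f_{p_1}\lambda_1)^2$ deficit, factoring out $\lambda_{m+1}\cdots\lambda_{l-1}$, then doubling, then Fan's Savin theorem. However, two steps do not go through as you describe them.

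\textbf{The gradient term of $f$.} After dividing by $\lambda_1$, the piece $\sum_j f_{p_j}u_{11j}$ of $D_{11}f$ becomes exactly $\sum_j f_{p_j}b_j$. This term is linear in the unbounded gradient of $b$, and it cannot be absorbed into $cF^{jj}b_j^2$: Cauchy--Schwarz leaves $f_{p_1}^2/F^{11}$, and $F^{11}\to 0$ as $\lambda_1\to\infty$ (e.g.\ $F^{11}=f^2/\lambda_1^2$ for $(l,k)=(n,n-1)$). So the inequality you aim for in Step~1 does not follow. Neither correction you suggest helps. The first gives $\sum_iF^{ii}(A\cdot Du)_{ii}=\sum_jA_jD_jf$, which contributes $\sum_jA_jf_{p_j}\lambda_j$, of order $\lambda_1$ and of no sign. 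The second, $\frac{A}{2}|x|^2$, only adds $A\sum_iF^{ii}$.

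The paper instead keeps $+\sum_if_{p_i}(b_m)_i$ in the Jacobi inequality with coefficient exactly one. It removes this term in the doubling step by an exact cancellation that depends on Shankar's test function $\eta a_m$, where $\eta=(e^{(1-\varphi)/h}-1)_+$ and $\varphi=(x-y)\cdot Du-u+u(y)+\frac{t}{2}|x-y|^2$. At the maximum, $(a_m)_i=-a_m\eta_i/\eta$ turns that term into $\frac{a_m(\eta+1)}{h}\sum_if_{p_i}(x_i-y_i)(u_{ii}+t)$. Meanwhile $-\frac{a_m(\eta+1)}{h}\sum_iF^{ii}\varphi_{ii}$ contains $-\frac{a_m(\eta+1)}{h}\sum_j(x_j-y_j)D_jf$, hence $-\frac{a_m(\eta+1)}{h}\sum_j(x_j-y_j)f_{p_j}u_{jj}$. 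The two unbounded terms cancel, leaving $h(t\sum_iF^{ii}+C)\ge\sum_iF^{ii}(x_i-y_i)^2(u_{ii}+t)^2$.

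Your $\rho b+\frac{\epsilon}{2}|x-y|^2$ contains no radial derivative $(x-y)\cdot Du$, so it produces neither this cancellation nor the weights $(u_{ii}+t)^2$. Those weights are what make the bounds $F^{ii}(1+u_{ii})^2\sim 1$ (resp.\ $\sim\lambda_{l-1}$) usable. Even inside the exponential cutoff, a $\varphi$ without the $(x-y)\cdot Du$ part would only give $t^2\sum_iF^{ii}(x_i-y_i)^2$, which degenerates in the $e_1$-direction since $F^{11}\to0$.

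\textbf{Uniformity of the base case.} Alexandrov's theorem gives a point $y$ with $\sup_{B_r(y)}|u-Q|=o(r^2)$. But this rate, and hence the radius at which Savin's smallness $\delta$ holds, depends on $u$ itself, not only on $n,l,\|u\|_{C^{0,1}},\|1/f\|_{L^\infty},\|f\|_{C^{1,1}}$. So ``the number of iterations depending only on the data'' is unjustified, and Step~3 does not yield the stated estimate.

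The paper argues by contradiction:
\begin{enumerate}[(i)]
\item Take $u_k,f_k$ with uniformly bounded data and $|D^2u_k(0)|\to\infty$, and pass to a uniform convex limit $u$.
\item Apply Alexandrov to the limit $u$ at some $y\in B_{R/2}(0)$.
\item For $k$ large, apply Fan's theorem to the rescaled $u_k-Q$ to get a uniform $C^{2,\alpha}$ bound on $B_{r/2}(y)$. Ellipticity near $D^2Q$ comes from Lemmas~\ref{lem:2.3} and \ref{lem:2.4}.
\item Apply the doubling inequality once: it passes directly from the small ball to the fixed ball $B_R(y)\ni 0$, with a constant depending on $r$, so no iteration is needed. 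This bounds $|D^2u_k(0)|$, a contradiction.
\end{enumerate}
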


We also review recent progress on interior Hessian estimates for Hessian quotient equations. In dimensions $n = 3$ and $4$, estimates for $\frac{\sigma_3}{\sigma_1}(D^2u) = 1$ were originally obtained by Chen–Warren–Yuan \cite{Chen2009} and Wang–Yuan \cite{WangYu2014SLE} as a consequence of their work on the special Lagrangian equation. Zhou \cite{Zhou2024} subsequently generalized these estimates to the variable coefficient case $\frac{\sigma_3}{\sigma_1}(D^2u) = f(x)$, again as a consequence of his study on the twisted special Lagrangian equation. More recently, Mei–Yan \cite{MeiYan2026} obtained Hessian estimates for semi-convex solutions to $\frac{\sigma_3}{\sigma_l}(D^2u) = 1$ ($l \in \{1,2\}$) in general dimensions. Additionally, Lu and Sroka \cite{LuSroka2026} established a Liouville theorem for semi-convex entire solutions to $\frac{\sigma_2}{\sigma_1}(D^2u) = 1$ by reformulating the quotient operator as a $\sigma_2$ operator. Finally, for $\frac{\sigma_2}{\sigma_1}(D^2u) = f(x,u)$, Jiao and Sui \cite{JiaoSui2026} obtained Hessian estimates for $2$-convex solutions when $n=3$, and for semi-convex solutions when $n \ge 4$.

The structure of the paper is as follows. In Section \ref{s2}, we collect some established properties of the Hessian quotient operators, establish several additional preliminary results, and restate the generalized Savin small perturbation theorem. In Section \ref{s3}, we prove the classical Jacobi inequalities. In Section \ref{s4}, we prove the doubling inequalities. In Section \ref{s5}, we complete the proof of the Hessian estimates (Theorem \ref{main_thm} and Theorem \ref{main_thm_general_l_k}). Finally, in Section \ref{s6}, we construct a counterexample (Remark \ref{thm:counterexample}).

\section{Preliminaries}
\label{s2}
In this section, we restate several established properties of the Hessian quotient operators and prove several additional properties. We also restate the generalized Savin small perturbation theorem.
Throughout the article, let $\lambda_1\geq \lambda_2 \ge \dots \ge \lambda_n > 0$ be the eigenvalues of $D^2u$.

We first restate the formulas for the first and second derivatives of the operator $F$ with respect to the matrix entries.
\begin{lemma} [\cite{LuTsai_general_quotient} Lemma 2.2]\label{lem:2.2}
Let $n \ge 2$, $1 \le k < l \le n$, and $F = \frac{\sigma_l}{\sigma_k}(D^2u)$. Suppose that $u$ is a $C^2$ strictly convex function and $D^2u$ is diagonalized at $x_0$. Then at $x_0$, we have
\begin{align*}
F^{pq} &= \left(\frac{\sigma_{l}^{pq}}{\sigma_{k}} - \frac{\sigma_{l}\sigma_{k}^{pq}}{\sigma_{k}^{2}}\right)\delta_{pq}, \\
F^{pq,rs} &=
\begin{cases}
-2\frac{\sigma_{l}^{pp}\sigma_{k}^{pp}}{\sigma_{k}^{2}} + 2\frac{\sigma_{l}(\sigma_{k}^{pp})^{2}}{\sigma_{k}^{3}}, & p=q=r=s, \\
\frac{\sigma_{l}^{pp,qq}}{\sigma_{k}} - \frac{\sigma_{l}^{pp}\sigma_{k}^{qq} + \sigma_{l}^{qq}\sigma_{k}^{pp}}{\sigma_{k}^{2}} - \frac{\sigma_{l}\sigma_{k}^{pp,qq}}{\sigma_{k}^{2}} + 2\frac{\sigma_{l}\sigma_{k}^{pp}\sigma_{k}^{qq}}{\sigma_{k}^{3}}, & p=q, r=s, p \ne r, \\
-\frac{\sigma_{l}^{pp,qq}}{\sigma_{k}} + \frac{\sigma_{l}\sigma_{k}^{pp,qq}}{\sigma_{k}^{2}}, & p=s, q=r, p \ne q, \\
0, & \mathrm{otherwise}.
\end{cases}
\end{align*}
In particular, for all $p \ne q$, we have $-F^{pq,qp}>0$ and
\begin{equation*}
-F^{pq,qp} = \frac{F^{pp} - F^{qq}}{\lambda_{q} - \lambda_{p}}, \quad \lambda_{p} \ne \lambda_{q},
\end{equation*}
where $\lambda_{p}$ and $\lambda_{q}$ are the eigenvalues of $D^2u$.
\end{lemma}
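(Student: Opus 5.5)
The plan is to derive all the formulas from the quotient rule, applied to the diagonal-matrix expansions of $\sigma_l$ and $\sigma_k$. We view $F(A)=\sigma_l(A)/\sigma_k(A)$ as a function of the symmetric matrix $A$ and use the standard first- and second-derivative formulas for $\sigma_m$ at a diagonal matrix $A=\mathrm{diag}(\lambda_1,\dots,\lambda_n)$.

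\textbf{First derivatives.} At a diagonal matrix,
\[
\sigma_m^{pq}=\sigma_{m-1}(\lambda|p)\,\delta_{pq}.
\]
For $p=q=r=s$ we have $\sigma_m^{pp,pp}=0$. For $p=q$, $r=s$, $p\ne r$,
\[
\sigma_m^{pp,rr}=\sigma_{m-2}(\lambda|pr).
\]
For $p=s$, $q=r$, $p\ne q$,
\[
\sigma_m^{pq,qp}=-\sigma_{m-2}(\lambda|pq),
\]
and all other second derivatives vanish. These follow by writing $\sigma_m(A)$ as the sum of the principal $m\times m$ minors and differentiating at a diagonal matrix.

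With these in hand, the quotient rule gives
\[
F^{pq}=\frac{\sigma_l^{pq}}{\sigma_k}-\frac{\sigma_l\,\sigma_k^{pq}}{\sigma_k^2},
\]
which is diagonal because both $\sigma_l^{pq}$ and $\sigma_k^{pq}$ are.

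\textbf{Second derivatives.} Differentiating once more gives
\[
F^{pq,rs}=\frac{\sigma_l^{pq,rs}}{\sigma_k}-\frac{\sigma_l^{pq}\sigma_k^{rs}+\sigma_l^{rs}\sigma_k^{pq}}{\sigma_k^2}-\frac{\sigma_l\,\sigma_k^{pq,rs}}{\sigma_k^2}+\frac{2\sigma_l\,\sigma_k^{pq}\sigma_k^{rs}}{\sigma_k^3}.
\]
We then sort into the four index cases.
\begin{itemize}
\item When $p=q=r=s$, the terms $\sigma_l^{pp,pp}$ and $\sigma_k^{pp,pp}$ vanish, which leaves the stated formula.
\item When $p=q$, $r=s$, $p\ne r$, every term survives, giving the second line.
\item When $p=s$, $q=r$, $p\ne q$, the first-derivative products vanish since $\sigma^{pq}=0$ off the diagonal. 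Only the terms involving $\sigma^{pq,qp}$ remain, giving the third line.
\item In all other cases, everything vanishes.
\end{itemize}

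\textbf{The identity for $-F^{pq,qp}$.} This is the step needing a genuine computation. Using $\sigma_m^{pq,qp}=-\sigma_{m-2}(\lambda|pq)$, we get
\[
-F^{pq,qp}=\frac{\sigma_{l-2}(\lambda|pq)\,\sigma_k-\sigma_l\,\sigma_{k-2}(\lambda|pq)}{\sigma_k^2}.
\]
Next we use the splitting
\[
\sigma_{m-1}(\lambda|p)=\sigma_{m-1}(\lambda|pq)+\lambda_q\,\sigma_{m-2}(\lambda|pq),
\]
and the same with $p$ and $q$ interchanged. Subtracting yields
\[
\sigma_m^{pp}-\sigma_m^{qq}=(\lambda_q-\lambda_p)\,\sigma_{m-2}(\lambda|pq).
\]
Hence
\[
F^{pp}-F^{qq}=(\lambda_q-\lambda_p)\,\frac{\sigma_{l-2}(\lambda|pq)\,\sigma_k-\sigma_l\,\sigma_{k-2}(\lambda|pq)}{\sigma_k^2},
\]
which gives the identity whenever $\lambda_p\ne\lambda_q$.

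\textbf{Positivity of $-F^{pq,qp}$.} This is the main obstacle: we must show
\[
\sigma_{l-2}(\lambda|pq)\,\sigma_k>\sigma_l\,\sigma_{k-2}(\lambda|pq)
\]
for positive $\lambda$. We expand
\[
\sigma_m=\sigma_m(\lambda|pq)+(\lambda_p+\lambda_q)\,\sigma_{m-1}(\lambda|pq)+\lambda_p\lambda_q\,\sigma_{m-2}(\lambda|pq).
\]
Writing $a_j=\sigma_j(\lambda|pq)$ (with the convention that $a_j=0$ for $j<0$), the difference becomes
\[
\bigl(a_{l-2}a_k-a_l a_{k-2}\bigr)+(\lambda_p+\lambda_q)\bigl(a_{l-2}a_{k-1}-a_{l-1}a_{k-2}\bigr).
\]
The $\lambda_p\lambda_q$ terms cancel. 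Both brackets are nonnegative by the Newton–Maclaurin inequalities: $a_{j}/a_{j-1}$ is nonincreasing in $j$ for positive variables, and $k<l$. The bracket $a_{l-2}a_k-a_la_{k-2}$ is moreover strictly positive, either because some term is strict or because $a_{l-2}a_k>0=a_la_{k-2}$ in the degenerate index cases. Checking the degenerate cases, where $l-2$ or $k-2$ falls below $0$ or $l>n-2$, requires some care, but each of them reduces to the same monotonicity.

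Because $\lambda_p\ne\lambda_q$ is not needed for positivity, we get $-F^{pq,qp}>0$ for all $p\ne q$.
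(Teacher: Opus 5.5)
Your route is the same as the paper's. The paper proves only the positivity of $-F^{pq,qp}$; the derivative formulas and the identity $-F^{pq,qp}=(F^{pp}-F^{qq})/(\lambda_q-\lambda_p)$ are quoted from Lu--Tsai. It proves positivity with exactly your expansion of $\sigma_m$ in $\lambda_p,\lambda_q$, the cancellation of the $\lambda_p\lambda_q$ terms, and Newton's inequality on the two brackets. Your quotient-rule derivations and your proof of the identity via $\sigma_m^{pp}-\sigma_m^{qq}=(\lambda_q-\lambda_p)\sigma_{m-2}(\lambda|pq)$ are correct and fill in what the paper cites.

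The one step that fails is your claim that $a_{l-2}a_k-a_la_{k-2}$ is strictly positive. The $a_j$ are elementary symmetric functions of only $n-2$ variables, so $a_j=0$ for $j>n-2$. When $k=n-1$ (so $l=n$, the case $\sigma_n/\sigma_{n-1}$, including $n=2$), both $a_k$ and $a_l$ vanish. This bracket is then identically zero, and your fallback ``$a_{l-2}a_k>0=a_la_{k-2}$'' is false there as well. Indeed, for $k=n-1$ one gets $-F^{pq,qp}\sigma_{n-1}^2=(\lambda_p+\lambda_q)a_{n-2}^2$, consistent with Lemma~\ref{lem:F_abba_exact}(a).

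Strictness has to come from the other bracket, which is always positive:
\begin{itemize}
\item Since $0\le k-1\le l-2\le n-2$, we have $a_{l-2}a_{k-1}>0$.
\item If $k=1$ or $l=n$, then $a_{l-1}a_{k-2}=0$.
\item Otherwise $1\le k-1<l-1\le n-2$. The strict unnormalized Newton inequalities $a_j^2>a_{j-1}a_{j+1}$ make $a_j/a_{j-1}$ strictly decreasing, which gives $a_{l-2}a_{k-1}>a_{l-1}a_{k-2}$.
\end{itemize}
Multiplying by $\lambda_p+\lambda_q>0$ and adding the nonnegative bracket $a_{l-2}a_k-a_la_{k-2}$ yields $-F^{pq,qp}>0$ for all $p\ne q$. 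With strictness moved to this bracket, your proof is complete.
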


For completeness, we provide the proof that $-F^{pq,qp} \ge 0$ on the positive cone. The proof for the case $l=n$ is given in \cite{Lu2025_top_quotient} Lemma 2.2.

\begin{proof}
For $p \neq q$,
\begin{equation*}
  -F^{pq,qp} = \frac{1}{\sigma_k^2} (\sigma_l^{pp,qq}\sigma_k - \sigma_l\sigma_k^{pp,qq} ).  
\end{equation*}

Let $T_m = \sigma_m(\lambda | \lambda_p, \lambda_q)$. We can expand $\sigma_l$ and $\sigma_k$ as
\begin{equation*}
    \sigma_m = \lambda_p\lambda_q T_{m-2} + (\lambda_p+\lambda_q)T_{m-1} + T_m.
\end{equation*}
\begin{align*}
    -F^{pq,qp}\sigma_k^2 &=  T_{l-2} \big( \lambda_p\lambda_q T_{k-2} + (\lambda_p+\lambda_q)T_{k-1} + T_k \big) \\
    &- \big( \lambda_p\lambda_q T_{l-2} + (\lambda_p+\lambda_q)T_{l-1} + T_l \big) T_{k-2}\\
    &=(\lambda_p+\lambda_q) \left[ T_{l-2}T_{k-1} - T_{l-1}T_{k-2} \right] + \left[ T_{l-2}T_k - T_l T_{k-2} \right] > 0,
\end{align*}
where we use Newton's inequality and $\lambda_p,\lambda_q > 0$ in the last inequality.
\end{proof}

We restate the explicit formulas for the first derivatives of the operator $F(D^2u) = \frac{\sigma_n}{\sigma_{k}}$, where $k \in \{n-1,n-2\}$ with $k\ge 1$. They are taken from the proofs of Lemma 3.1 and Lemma 3.2 in \cite{Lu2025_top_quotient}.
\begin{lemma}
[\cite{Lu2025_top_quotient}]
\label{n-1properties}
Let $n \ge 2$ and $F(\lambda) = \frac{\sigma_{n}}{\sigma_{n-1}}(\lambda)$ for $\lambda \in \mathbb{R}^n$ with $\lambda_{1} \ge \dots \ge \lambda_{n} > 0$. We may rewrite $F(\lambda)$ as
\begin{align*}
F(\lambda) &= \frac{1}{\sum_i \frac{1}{\lambda_i}}.
\end{align*}
Taking derivatives with respect to $\lambda_i$, we have
\begin{align*}
F^{ii} &= \frac{1}{\left( \sum_k \frac{1}{\lambda_k} \right)^2} \frac{1}{\lambda_i^2} = \frac{F^2}{\lambda_i^2}.
\end{align*}
\end{lemma}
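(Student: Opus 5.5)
The identity $\sigma_n/\sigma_{n-1} = 1/\sum_i \lambda_i^{-1}$ follows by dividing $\sigma_{n-1}$ by $\sigma_n$. Because $\lambda_n>0$, we have $\sigma_n=\prod_j\lambda_j>0$. Also $\sigma_{n-1}(\lambda)=\sum_i\prod_{j\ne i}\lambda_j$, and each summand equals $\sigma_n/\lambda_i$. Hence
\begin{equation*}
\frac{\sigma_{n-1}}{\sigma_n}=\sum_i \frac{1}{\lambda_i},
\end{equation*}
which is positive and finite. Inverting gives $F(\lambda)=\bigl(\sum_i \lambda_i^{-1}\bigr)^{-1}$.

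For the derivative, I would differentiate $F=g^{-1}$ with $g(\lambda)=\sum_k\lambda_k^{-1}$. The only dependence on $\lambda_i$ is through the term $\lambda_i^{-1}$, so $\partial_{\lambda_i} g=-\lambda_i^{-2}$. The chain rule then gives
\begin{equation*}
F^{ii}=\frac{\partial F}{\partial\lambda_i}=-g^{-2}\,\partial_{\lambda_i}g=\frac{1}{\bigl(\sum_k\lambda_k^{-1}\bigr)^2}\,\frac{1}{\lambda_i^2}=\frac{F^2}{\lambda_i^2}.
\end{equation*}

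This should be read as follows. When $D^2u$ is diagonalized at $x_0$, the matrix derivative $F^{ii}$ from Lemma \ref{lem:2.2} coincides with the eigenvalue derivative $\partial F/\partial\lambda_i$. One can also check this directly from the formula in Lemma \ref{lem:2.2} with $l=n$, $k=n-1$, using $\sigma_n^{ii}=\sigma_n/\lambda_i$ and $\sigma_{n-1}^{ii}=\sigma_{n-1}(\lambda|i)$; this gives the same expression.

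There is no real obstacle here. The only point needing care is the identification of the matrix derivative $F^{ii}$ with the eigenvalue derivative at a diagonal point, and that is exactly what Lemma \ref{lem:2.2} provides.
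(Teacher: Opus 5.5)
Your argument is correct and matches the paper's own derivation: write $\sigma_{n-1}/\sigma_n=\sum_i \lambda_i^{-1}$, invert, and differentiate $1/g$ by the chain rule. One slip in your optional cross-check via Lemma~\ref{lem:2.2}: $\sigma_{n-1}^{ii}=\partial\sigma_{n-1}/\partial\lambda_i=\sigma_{n-2}(\lambda|\lambda_i)=\frac{\sigma_n}{\lambda_i}\sum_{j\neq i}\frac{1}{\lambda_j}$, not $\sigma_{n-1}(\lambda|\lambda_i)$ (that quantity is $\sigma_n^{ii}$), and only with the corrected expression does that formula return $\frac{F}{\lambda_i}-\frac{F^2}{\lambda_i}\bigl(\frac{1}{F}-\frac{1}{\lambda_i}\bigr)=\frac{F^2}{\lambda_i^2}$.
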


\begin{lemma}
[\cite{Lu2025_top_quotient}]
\label{n-2properties}
Let $n \ge 2$ and $F(\lambda) = \frac{\sigma_{n}}{\sigma_{n-2}}(\lambda)$ for $\lambda \in \mathbb{R}^n$ with $\lambda_{1} \ge \dots \ge \lambda_{n} > 0$. We may rewrite $F(\lambda)$ as
\begin{align*}
F(\lambda) &= \frac{\lambda_1 \lambda_2 \dots \lambda_n}{\sum_{i < j} \lambda_1 \dots \hat{\lambda}_i \dots \hat{\lambda}_j \dots \lambda_n} = \frac{1}{\sum_{i<j} \frac{1}{\lambda_i \lambda_j}}.
\end{align*}
Taking derivatives with respect to $\lambda_a$, we have
\begin{align*}
F^{aa} &= \frac{1}{\left( \sum_{i<j} \frac{1}{\lambda_i \lambda_j} \right)^2} \frac{1}{\lambda_a^2} \sum_{b \neq a} \frac{1}{\lambda_b} = F^2 \frac{1}{\lambda_a^2} \sum_{b \neq a} \frac{1}{\lambda_b}.
\end{align*}
\end{lemma}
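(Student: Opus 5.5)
The statement is the lemma recording $F = \frac{\sigma_n}{\sigma_{n-2}}$ and its first derivatives. Work entirely on the positive cone $\lambda_1 \ge \dots \ge \lambda_n > 0$, where $\sigma_n = \lambda_1\cdots\lambda_n \neq 0$, so we may divide freely.

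\textbf{Step 1 (rewriting $F$).} Write $\sigma_{n-2}(\lambda)=\sum_{i<j}\prod_{m\ne i,j}\lambda_m$. Each summand is $\sigma_n/(\lambda_i\lambda_j)$, so
\[
\sigma_{n-2}=\sigma_n\sum_{i<j}\frac{1}{\lambda_i\lambda_j}.
\]
This gives the claimed formula $F=\bigl(\sum_{i<j}\frac{1}{\lambda_i\lambda_j}\bigr)^{-1}$, exactly as in Lemma \ref{n-1properties}, where $\sigma_{n-1}=\sigma_n\sum_i 1/\lambda_i$.

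\textbf{Step 2 (differentiating $G$).} Set $G=\sum_{i<j}\frac{1}{\lambda_i\lambda_j}$, so that $F=1/G$ and $F^{aa}=-G_a/G^2=-F^2G_a$. Only the pairs containing the index $a$ depend on $\lambda_a$. They contribute $\frac{1}{\lambda_a}\sum_{b\ne a}\frac{1}{\lambda_b}$, whose $\lambda_a$-derivative is $-\frac{1}{\lambda_a^2}\sum_{b\ne a}\frac{1}{\lambda_b}$.

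\textbf{Step 3 (conclusion).} Substituting gives
\[
F^{aa}=F^2\,\frac{1}{\lambda_a^2}\sum_{b\ne a}\frac{1}{\lambda_b}.
\]

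Two minor points need care. First, $F^{aa}$ here means the derivative with respect to the eigenvalue $\lambda_a$. By Lemma \ref{lem:2.2}, at a point where $D^2u$ is diagonal this coincides with the diagonal entry of the matrix derivative. Second, the constraint $k=n-2\ge1$ forces $n\ge3$, so every sum above is nonempty. There is no genuine obstacle; the only step requiring attention is the bookkeeping of which pairs $i<j$ involve $a$.
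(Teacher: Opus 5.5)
Your proposal is correct and follows the same route as the paper, which takes the computation from Lu: write $\sigma_{n-2}=\sigma_n\sum_{i<j}\frac{1}{\lambda_i\lambda_j}$ so that $F=1/G$, then use $F^{aa}=-F^2\,\partial_{\lambda_a}G$, where only the pairs containing $a$ contribute. Your aside that $n\ge 3$ is forced is unnecessary, since the lemma is stated for $n\ge 2$ and the same computation works there with $\sigma_0=1$ (it gives $F=\lambda_1\lambda_2$ and $F^{11}=\lambda_2$).
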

We state a formula relating $-F^{ab,ba}$ and $\frac{F^{aa}}{\lambda_a}$ that will be used in the proof of the Jacobi inequalities.
\begin{lemma}
\label{lem:F_abba_exact}
Let $n \ge 2$ and $F(D^2u) = \frac{\sigma_n}{\sigma_k}(D^2u)$ for $k \in \{n-1, n-2\}$ with $k\ge 1$. Let the eigenvalues be $\lambda_1 \ge \dots \ge \lambda_n > 0$ at $x_0$. Suppose there exist distinct indices $a \neq b$ such that $\lambda_a = \lambda_b = \lambda$ at $x_0$. Then $-F^{ab,ba}$ evaluated at $x_0$ satisfies the following.
\begin{enumerate}[(a)]
    \item For $k = n-1$,
    \begin{equation*}
    -F^{ab,ba} =  \frac{2F^{aa}}{\lambda}.
    \end{equation*}
    \item For $k = n-2$,
    \begin{equation*}
    -F^{ab,ba} = \frac{2F^{aa}}{\lambda} - \frac{F^2}{\lambda^4}.
    \end{equation*}
\end{enumerate}
\end{lemma}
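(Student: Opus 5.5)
The plan is to obtain $-F^{ab,ba}$ at a point with $\lambda_a=\lambda_b$ as a limit of the difference-quotient formula in Lemma \ref{lem:2.2},
\[
-F^{ab,ba}=\frac{F^{aa}-F^{bb}}{\lambda_b-\lambda_a},\qquad \lambda_a\neq\lambda_b .
\]
This requires a continuity justification. Lemma \ref{lem:2.2} also gives
\[
-F^{ab,ba}=\frac{\sigma_n^{aa,bb}\sigma_k-\sigma_n\sigma_k^{aa,bb}}{\sigma_k^2}.
\]
The right-hand side is a rational, hence continuous, function of $(\lambda_1,\dots,\lambda_n)$ on the positive cone. So I would perturb the eigenvalue vector to $\lambda^\varepsilon$, replacing $\lambda_b$ by $\lambda_b+\varepsilon$ and keeping the other entries. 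This expression is an identity in the eigenvalues: the diagonalized formula is evaluated at the diagonal matrix $\mathrm{diag}(\lambda^\varepsilon)$, not along the actual solution. I would then compute the difference quotient explicitly using the closed forms of $F^{ii}$ from Lemmas \ref{n-1properties} and \ref{n-2properties}, and let $\varepsilon\to0$. Note that $F$ itself depends on $\lambda^\varepsilon$, but continuously, so it tends to its value at $x_0$.

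For (a), take $k=n-1$ and use Lemma \ref{n-1properties}, $F^{ii}=F^2/\lambda_i^2$. Then
\[
F^{aa}-F^{bb}=F^2\Bigl(\frac1{\lambda_a^2}-\frac1{\lambda_b^2}\Bigr)=F^2\frac{(\lambda_b-\lambda_a)(\lambda_a+\lambda_b)}{\lambda_a^2\lambda_b^2}.
\]
Dividing by $\lambda_b-\lambda_a$ gives $F^2(\lambda_a+\lambda_b)/(\lambda_a^2\lambda_b^2)$. In the limit this becomes $2F^2/\lambda^3=2F^{aa}/\lambda$.

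For (b), take $k=n-2$ and write $S=\sum_c 1/\lambda_c$. By Lemma \ref{n-2properties}, $F^{ii}=F^2\lambda_i^{-2}(S-1/\lambda_i)$. Hence
\[
F^{aa}-F^{bb}=F^2\Bigl[S\Bigl(\frac1{\lambda_a^2}-\frac1{\lambda_b^2}\Bigr)-\Bigl(\frac1{\lambda_a^3}-\frac1{\lambda_b^3}\Bigr)\Bigr].
\]
After dividing by $\lambda_b-\lambda_a$, the two brackets tend to $2/\lambda^3$ and $3/\lambda^4$ respectively. The limit is therefore $F^2(2S/\lambda^3-3/\lambda^4)$. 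Regrouping this as
\[
\frac{2}{\lambda}\cdot F^2\lambda^{-2}\Bigl(S-\frac1\lambda\Bigr)-\frac{F^2}{\lambda^4}=\frac{2F^{aa}}{\lambda}-\frac{F^2}{\lambda^4}
\]
gives the claim.

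The only delicate point is the limiting argument, namely that the value of $-F^{ab,ba}$ at coincident eigenvalues equals the limit of the difference quotients. I would handle it as described above. Both sides are continuous in the eigenvalues: the $\sigma$-formula is defined at coincident eigenvalues, and the difference-quotient formula of Lemma \ref{lem:2.2} agrees with it for distinct eigenvalues. The remaining steps are elementary algebra.
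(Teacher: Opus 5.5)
Your proof is correct, but it takes a different route from the paper's.

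The paper never uses the difference quotient. It starts from the $\sigma$-expression $-F^{ab,ba}=\frac{F}{\lambda_a\lambda_b}-F\frac{\sigma_k^{aa,bb}}{\sigma_k}$ of Lemma~\ref{lem:2.2}. It then writes $\sigma_{n-1}^{aa,bb}=\sigma_{n-3}(\lambda|\lambda_a,\lambda_b)$ and $\sigma_{n-2}^{aa,bb}=\sigma_{n-4}(\lambda|\lambda_a,\lambda_b)$ as $\frac{\sigma_n}{\lambda_a\lambda_b}$ times sums of reciprocals, and eliminates those sums using $1/F=\sum_i 1/\lambda_i$ (resp. $1/F=\sum_{i<j}1/(\lambda_i\lambda_j)$). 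This gives closed forms valid at every point of the positive cone, for instance $-F^{ab,ba}=F^2(\lambda_a+\lambda_b)/(\lambda_a^2\lambda_b^2)$ when $k=n-1$. One then simply substitutes $\lambda_a=\lambda_b=\lambda$, so no continuity step is needed.

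You instead reuse the closed forms of $F^{ii}$ from Lemmas~\ref{n-1properties} and~\ref{n-2properties} together with the difference-quotient identity. This makes the algebra shorter and treats both cases the same way. The cost is a density argument. Your justification of it is what is needed: the $\sigma$-expression is rational and continuous on the positive cone, and it agrees with the difference quotient off the diagonal.

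Your quotients in fact simplify to the paper's closed forms before any limit is taken. In case (b), $F^2\bigl[S\frac{\lambda_a+\lambda_b}{\lambda_a^2\lambda_b^2}-\frac{\lambda_a^2+\lambda_a\lambda_b+\lambda_b^2}{\lambda_a^3\lambda_b^3}\bigr]$ equals $\frac{F^2}{\lambda_a^2\lambda_b^2}+\frac{F^2(\lambda_a+\lambda_b)}{\lambda_a^2\lambda_b^2}\sum_{c\neq a,b}\frac{1}{\lambda_c}$. So the two computations agree identity by identity.

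One cosmetic point: the perturbed vector $\lambda^\varepsilon$ may break the ordering $\lambda_1\ge\dots\ge\lambda_n$. This is harmless, because every formula you use is symmetric and holds on the whole positive cone.
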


\begin{proof}
By Lemma \ref{lem:2.2}, we have
\begin{equation}
\label{exact_cross_derivative}
-F^{ab,ba} = \frac{\sigma_n^{aa,bb}}{\sigma_k} - \frac{\sigma_n \sigma_k^{aa,bb}}{\sigma_k^2} = \frac{F}{\lambda_a \lambda_b} - F \frac{\sigma_k^{aa,bb}}{\sigma_k}.
\end{equation}
(a) For $k=n-1$, we have
\begin{align*}
    \sigma_{n-1}^{aa,bb} &= \sigma_{n-3}(\lambda| \lambda_a,\lambda_b) =   \sigma_{n-2}(\lambda| \lambda_a,\lambda_b) \sum_{i \neq a,b}\frac{1}{\lambda_i} = \frac{\sigma_n}{\lambda_a \lambda_b} \sum_{i \neq a,b} \frac{1}{\lambda_i} \\
    &= \frac{\sigma_n}{\lambda_a \lambda_b} \left( \frac{1}{F} - \frac{1}{\lambda_a} - \frac{1}{\lambda_b} \right),
\end{align*}
where we use $\frac{1}{F} = \sum_i \frac{1}{\lambda_i}$ in the last equality.
Substituting this into (\ref{exact_cross_derivative}) and evaluating it at $\lambda_a = \lambda_b = \lambda$ gives
\begin{align*}
    -F^{ab,ba} &= \frac{F}{\lambda_a \lambda_b} - \frac{F^2}{\lambda_a \lambda_b} \left( \frac{1}{F} - \frac{1}{\lambda_a} - \frac{1}{\lambda_b} \right) \\
    &= \frac{F^2(\lambda_a + \lambda_b)}{\lambda_a^2 \lambda_b^2} = \frac{2F^2}{\lambda^3} = \frac{2F^{aa}}{\lambda}.
\end{align*}
(b) For $k=n-2$, 
\begin{align*}
    \sigma_{n-2}^{aa,bb}&= \sigma_{n-4}(\lambda| \lambda_a,\lambda_b) =   \sigma_{n-2}(\lambda| \lambda_a,\lambda_b)   \sum_{\substack{c < d \\ c,d \neq a,b}} \frac{1}{\lambda_c \lambda_d} \\
    &=  \frac{\sigma_n}{\lambda_a \lambda_b} \sum_{\substack{c < d \\ c,d \neq a,b}} \frac{1}{\lambda_c \lambda_d} \\
    &= \frac{\sigma_n}{\lambda_a \lambda_b} 
    \left( \frac{1}{F} - \frac{1}{\lambda_a \lambda_b} - \left(\frac{1}{\lambda_a} + \frac{1}{\lambda_b}\right) \sum_{c \neq a,b} \frac{1}{\lambda_c} \right),
\end{align*}
where we use $\frac{1}{F} = \sum_{i < j} \frac{1}{\lambda_i \lambda_j}$ in the last equality.

Substituting this into (\ref{exact_cross_derivative}) and evaluating it at $\lambda_a = \lambda_b = \lambda$ gives
\begin{align*}
    -F^{ab,ba} &= \frac{F}{\lambda_a \lambda_b} - \frac{F^2}{\lambda_a \lambda_b} \left[ \frac{1}{F} - \frac{1}{\lambda_a \lambda_b} - \left(\frac{1}{\lambda_a} + \frac{1}{\lambda_b}\right) \sum_{c \neq a,b} \frac{1}{\lambda_c} \right] \\
    &= \frac{F^2}{\lambda_a^2 \lambda_b^2} + \frac{F^2(\lambda_a + \lambda_b)}{\lambda_a^2 \lambda_b^2} \sum_{c \neq a,b} \frac{1}{\lambda_c}\\
    & =\frac{F^2}{\lambda^4} + \frac{2F^2}{\lambda^3} \sum_{c \neq a,b} \frac{1}{\lambda_c}.
\end{align*}

By Lemma \ref{n-2properties}, 
\begin{equation}
    F^{aa} = \frac{F^2}{\lambda^3} + \frac{F^2}{\lambda^2} \sum_{c \neq a,b} \frac{1}{\lambda_c}.
\end{equation}
We have the desired equality.
\end{proof}

The following equality will be used in the Jacobi inequality for the general $l,k$ case.
\begin{lemma}
\label{lem:F_abba_general_l_k}
Let $n \ge 2$, $1 \le k < l \le n$, and let $F(D^2u) = \frac{\sigma_l}{\sigma_{k}}(D^2u)$. Let the eigenvalues be $\lambda_1 \ge \dots \ge \lambda_n > 0$ at $x_0$. Suppose there exist distinct indices $a \neq b$ such that $\lambda_a = \lambda_b = \lambda$ at $x_0$. Then $-F^{ab,ba}$ evaluated at $x_0$ satisfies the following equality. 
\begin{equation}
\label{golden_eq}
F^{aa} = \lambda_b (-F^{ab,ba}) + G^{ab},
\end{equation}
where $G^{ab}$ is defined by
\begin{equation}
\label{G_def}
G^{ab} := \frac{\sigma_{l-1}(\lambda|\lambda_a, \lambda_b)}{\sigma_{k}} - F \frac{\sigma_{k-1}(\lambda|\lambda_a, \lambda_b)}{\sigma_{k}}.
\end{equation}
\end{lemma}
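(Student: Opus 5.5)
We prove \eqref{golden_eq} by direct computation from Lemma \ref{lem:2.2}, using the same splitting device as in the proof of $-F^{pq,qp}>0$. Write $T_m = \sigma_m(\lambda|\lambda_a,\lambda_b)$, so that for every $m$
\begin{equation*}
\sigma_m = \lambda_a\lambda_b T_{m-2} + (\lambda_a+\lambda_b)T_{m-1} + T_m, \qquad \sigma_m^{aa} = \lambda_b T_{m-2} + T_{m-1}, \qquad \sigma_m^{aa,bb} = T_{m-2}.
\end{equation*}
These are the standard expansions of $\sigma_m$ with respect to the two distinguished variables $\lambda_a,\lambda_b$.

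First, substitute into the formulas of Lemma \ref{lem:2.2}. This gives
\begin{equation*}
F^{aa} = \frac{\sigma_l^{aa}}{\sigma_k} - F\frac{\sigma_k^{aa}}{\sigma_k} = \frac{\lambda_b T_{l-2} + T_{l-1}}{\sigma_k} - F\,\frac{\lambda_b T_{k-2}+T_{k-1}}{\sigma_k},
\end{equation*}
using $\sigma_l/\sigma_k^2 = F/\sigma_k$. Similarly,
\begin{equation*}
-F^{ab,ba} = \frac{\sigma_l^{aa,bb}}{\sigma_k} - \frac{\sigma_l\sigma_k^{aa,bb}}{\sigma_k^2} = \frac{T_{l-2}}{\sigma_k} - F\frac{T_{k-2}}{\sigma_k}.
\end{equation*}

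Second, multiply the second identity by $\lambda_b$ and subtract it from the first. The $\lambda_b T_{l-2}$ and $\lambda_b T_{k-2}$ terms cancel exactly, leaving
\begin{equation*}
\frac{T_{l-1}}{\sigma_k} - F\frac{T_{k-1}}{\sigma_k},
\end{equation*}
which is precisely $G^{ab}$ as defined in \eqref{G_def}. This proves \eqref{golden_eq}.

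Notice that the cancellation never uses $\lambda_a=\lambda_b$; it is an algebraic identity valid for any distinct pair of indices. The hypothesis $\lambda_a=\lambda_b$ matters only for how $-F^{ab,ba}$ is interpreted: when the eigenvalues are equal, the difference-quotient formula in Lemma \ref{lem:2.2} is unavailable, and one must use the explicit expression from the $p=s,\ q=r$ case. I will note this explicitly. As a consistency check, specializing to $l=n$, $k=n-1$ gives $G^{ab}=0$ and $F^{aa} = -\lambda F^{ab,ba}$, which should be compared against Lemma \ref{lem:F_abba_exact}(a).

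The main obstacle is bookkeeping rather than substance. I need to confirm the sign conventions for $F^{pq,qp}$ in Lemma \ref{lem:2.2}, namely that $-F^{ab,ba}$ equals $\sigma_l^{aa,bb}/\sigma_k - \sigma_l\sigma_k^{aa,bb}/\sigma_k^2$ with $a\neq b$, as used in the earlier positivity proof. I also need to adopt the convention $T_m=0$ for $m<0$, so that the edge cases $k=1$ and $l=2$ are covered.
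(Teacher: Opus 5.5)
Your proof is correct and is the same computation as the paper's. Both expand $\sigma_{j}(\lambda|\lambda_a)=\lambda_b\,\sigma_{j-1}(\lambda|\lambda_a,\lambda_b)+\sigma_{j}(\lambda|\lambda_a,\lambda_b)$ in $F^{aa}$ and recognize the $\lambda_b$-coefficient as $-F^{ab,ba}$ from Lemma \ref{lem:2.2}. You are also right that $\lambda_a=\lambda_b$ is never used.

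Your side consistency check is wrong, however. For $(l,k)=(n,n-1)$ only $\sigma_{n-1}(\lambda|\lambda_a,\lambda_b)$ vanishes, while $\sigma_{n-2}(\lambda|\lambda_a,\lambda_b)=\sigma_n/(\lambda_a\lambda_b)$. Hence $G^{ab}=-F^2/(\lambda_a\lambda_b)=-F^{aa}$ when $\lambda_a=\lambda_b=\lambda$, which gives $\lambda(-F^{ab,ba})=2F^{aa}$, in agreement with Lemma \ref{lem:F_abba_exact}(a). Your claimed $F^{aa}=-\lambda F^{ab,ba}$ would contradict that lemma.
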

\begin{proof}
By Lemma \ref{lem:2.2}, we have
\begin{equation*}
-F^{ab,ba} = \frac{\sigma_{l-2}(\lambda|\lambda_a, \lambda_b)}{\sigma_{k}} - F \frac{\sigma_{k-2}(\lambda|\lambda_a, \lambda_b)}{\sigma_{k}}.
\end{equation*}
The first derivative with respect to $\lambda_a$ is
\begin{equation}
\label{Faa_l}
F^{aa} = \frac{\sigma_{l-1}(\lambda|\lambda_a)}{\sigma_{k}} - F \frac{\sigma_{k-1}(\lambda|\lambda_a)}{\sigma_{k}}.
\end{equation}
Using the identity $\sigma_{j}(\lambda|\lambda_a) = \lambda_b \sigma_{j-1}(\lambda|\lambda_a, \lambda_b) + \sigma_{j}(\lambda|\lambda_a, \lambda_b)$, we have
\begin{align*}
F^{aa} &= \lambda_b \left[ \frac{\sigma_{l-2}(\lambda|\lambda_a, \lambda_b)}{\sigma_{k}} - F \frac{\sigma_{k-2}(\lambda|\lambda_a, \lambda_b)}{\sigma_{k}} \right] \\
&+ \left[ \frac{\sigma_{l-1}(\lambda|\lambda_a, \lambda_b)}{\sigma_{k}} - F \frac{\sigma_{k-1}(\lambda|\lambda_a, \lambda_b)}{\sigma_{k}} \right].
\end{align*}
The first bracketed term is exactly $-F^{ab,ba}$, and the second bracketed term is our definition of $G^{ab}$. This completes the proof.
\end{proof}
We establish a lower bound for the trace of the linearized operator $F^{ii}$.
\begin{lemma}
\label{general_trace_lower_bound}
Let $n \ge 3$, $l \le n$, and $k=l-2$ with $k\ge 1$. Suppose $\lambda_1 \ge \dots \ge \lambda_n > 0$ and $F(\lambda) = \frac{\sigma_l}{\sigma_k}(\lambda) = f > 0$. Then the trace of the linearized operator is bounded from below.
\begin{equation}
    \sum_i F^{ii} \ge C(n,l) \sqrt{f},
\end{equation}
where $C(n,l) = \frac{2(n-l+2)}{l-1} \sqrt{\frac{(l-1)(n-l+1)}{l(n-l+2)}}$.

\end{lemma}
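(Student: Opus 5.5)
The plan is to compute $\sum_i F^{ii}$ explicitly in terms of elementary symmetric polynomials, and then use Newton's inequalities twice. The first application removes the negative term in the trace. The second converts the remaining ratio into $\sqrt{f}$.

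\textbf{Step 1: the trace formula.} By \eqref{Faa_l},
\[
F^{ii}=\frac{\sigma_{l-1}(\lambda|\lambda_i)}{\sigma_k}-F\,\frac{\sigma_{k-1}(\lambda|\lambda_i)}{\sigma_k}.
\]
We use the standard identity $\sum_i \sigma_j(\lambda|\lambda_i)=(n-j)\sigma_j$. Summing over $i$ with $k=l-2$ then gives
\[
\sum_i F^{ii}=\frac{(n-l+1)\,\sigma_{l-1}\sigma_{l-2}-(n-l+3)\,\sigma_l\sigma_{l-3}}{\sigma_{l-2}^2}.
\]
When $l=3$ we have $k=1$, and we use the conventions $\sigma_0=1$ and $\sum_i\sigma_0(\lambda|\lambda_i)=n$. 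The formula is then still valid.

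\textbf{Step 2: controlling the negative term.} Write $S_j=\sigma_j/\binom{n}{j}$. Newton's inequalities $S_{j-1}S_{j+1}\le S_j^2$ on the positive cone chain together to give $S_lS_{l-3}\le S_{l-1}S_{l-2}$. Converting back to the $\sigma_j$, and using $\binom{n}{l}/\binom{n}{l-1}=\frac{n-l+1}{l}$ and $\binom{n}{l-3}/\binom{n}{l-2}=\frac{l-2}{n-l+3}$, we obtain
\[
(n-l+3)\,\sigma_l\sigma_{l-3}\le \frac{(n-l+1)(l-2)}{l}\,\sigma_{l-1}\sigma_{l-2}.
\]
Substituting this into Step 1 yields
\[
\sum_i F^{ii}\ \ge\ \frac{2(n-l+1)}{l}\,\frac{\sigma_{l-1}}{\sigma_{l-2}}.
\]

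\textbf{Step 3: passing to $\sqrt f$.} Newton's inequality $S_{l-1}^2\ge S_lS_{l-2}$ reads
\[
\sigma_{l-1}^2\ \ge\ \frac{l(n-l+2)}{(l-1)(n-l+1)}\,\sigma_l\sigma_{l-2}.
\]
Dividing by $\sigma_{l-2}^2$ gives
\[
\frac{\sigma_{l-1}}{\sigma_{l-2}}\ \ge\ \sqrt{\frac{l(n-l+2)}{(l-1)(n-l+1)}}\,\sqrt{F}.
\]
Since $F=f$, combining this with Step 2 gives
\[
\sum_iF^{ii}\ \ge\ 2\sqrt{\frac{(n-l+1)(n-l+2)}{l(l-1)}}\,\sqrt f .
\]
This constant equals the stated $C(n,l)$ after simplification.

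\textbf{Main obstacle.} The only real care is the index bookkeeping in Step 2, namely chaining Newton's inequalities to get $S_lS_{l-3}\le S_{l-1}S_{l-2}$ and computing the binomial ratios correctly. The degenerate case $l=3$ must also be handled, where $\sigma_{l-3}=\sigma_0=1$ and the chained inequality becomes $S_3S_0\le S_2S_1$. All the inequalities hold because $\lambda$ lies in the positive cone.
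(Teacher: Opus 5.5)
Your proof is correct and follows the paper's argument. It uses the same trace identity and the same two Newton inequalities, $S_{l-2}^2\ge S_{l-1}S_{l-3}$ and $S_{l-1}^2\ge S_lS_{l-2}$, and reaches the same constant, since $\frac{2(n-l+2)}{l-1}\sqrt{\frac{(l-1)(n-l+1)}{l(n-l+2)}}=2\sqrt{\frac{(n-l+1)(n-l+2)}{l(l-1)}}$. The only difference is that you write $f=\sigma_l/\sigma_{l-2}$ and chain the two inequalities, which makes the negative term a multiple of $\sigma_{l-1}/\sigma_{l-2}$ directly and replaces the paper's monotonicity argument for $g(x)=c_1x-c_2f/x$ evaluated at $x_{\min}$.
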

\begin{proof}
By Lemma \ref{lem:2.2}, we have
\begin{equation*} 
F^{ii} = \frac{\sigma_l^{ii}}{\sigma_k} - \frac{\sigma_l \sigma_k^{ii}}{\sigma_k^2}.
\end{equation*}
Summing over $i$ and using the identity $\sum_i \frac{\partial \sigma_k}{\partial \lambda_i} = (n - k + 1) \sigma_{k-1}$, we have 
\begin{align*}
\sum_i F^{ii} &= \frac{\sum_i \sigma_l^{ii}}{\sigma_k} - \frac{\sigma_l \sum_i \sigma_k^{ii}}{\sigma_k^2} \\
&= \frac{(n-l+1)\sigma_{l-1}}{\sigma_k}-\frac{  \sigma_l (n - k + 1) \sigma_{k-1}}{\sigma_k^2} \\
&= \frac{(n-l+1)\sigma_{l-1}}{\sigma_k} - \frac{(n - k + 1)}{\sigma_k} \sigma_{k-1} F.
\end{align*}
For $k = l-2$ and $F=f$, this becomes
\begin{equation*} 
\sum_i F^{ii} = \frac{(n-l+1)\sigma_{l-1}}{\sigma_{l-2}} - \frac{(n - l + 3)}{\sigma_{l-2}} \sigma_{l-3} f.
\end{equation*}
By Newton's inequality, 
\begin{equation*} 
\left( \frac{\sigma_{l-2}}{\binom{n}{l-2}} \right)^2 \ge \frac{\sigma_{l-1}}{\binom{n}{l-1}} \frac{\sigma_{l-3}}{\binom{n}{l-3}}.
\end{equation*}
This simplifies to
\begin{equation*} 
\frac{\sigma_{l-3}}{\sigma_{l-2}} \le \frac{(l-2)(n-l+2)}{(l-1)(n-l+3)} \frac{\sigma_{l-2}}{\sigma_{l-1}}.
\end{equation*}
Substituting this into the trace equation yields
\begin{align*}
\sum_i F^{ii} 
&\ge (n-l+1)\frac{\sigma_{l-1}}{\sigma_{l-2}} - f \frac{(l-2)(n-l+2)}{l-1} \frac{\sigma_{l-2}}{\sigma_{l-1}}.
\end{align*}
Now, we minimize the function $g(x) := c_1 x - \frac{c_2f}{x}$, where $c_1 = n-l+1$, $c_2 = \frac{(l-2)(n-l+2)}{l-1}$, and $x = \frac{\sigma_{l-1}}{\sigma_{l-2}}$.
For a fixed $f$, we view $g$ as a function of $x$. Since $g'(x) = c_1 + \frac{c_2f}{x^2} > 0$, the function $g(x)$ is strictly increasing. It therefore suffices to evaluate $g$ at the minimum possible value of $x$.
By Newton's inequality,
\begin{equation*}
    f = \frac{\sigma_l}{\sigma_{l-2}} \le \frac{(l-1)(n-l+1)}{l(n-l+2)} \left(\frac{\sigma_{l-1}}{\sigma_{l-2}}\right)^2 = \frac{(l-1)(n-l+1)}{l(n-l+2)} x^2.
\end{equation*}
Thus, $x \ge \sqrt{\frac{l(n-l+2)}{(l-1)(n-l+1)}} \sqrt{f} := x_{min}$.
Hence, evaluating the trace lower bound at this minimum value $x_{min}$ gives the desired lower bound.
Let $K = \sqrt{\frac{l(n-l+2)}{(l-1)(n-l+1)}}$. Substituting the values of $c_1$, $c_2$, and $K$, we have 
\begin{align*}
    c_1 K - \frac{c_2}{K} &= \frac{1}{K} (c_1 K^2 - c_2) \\
    &= \frac{1}{K} \left( (n-l+1)\frac{l(n-l+2)}{(l-1)(n-l+1)} - \frac{(l-2)(n-l+2)}{l-1} \right) \\
    &= \frac{1}{K} \left( \frac{l(n-l+2) - (l-2)(n-l+2)}{l-1} \right) = \frac{1}{K} \frac{2(n-l+2)}{l-1}.
\end{align*}
Since $l \ge 3$ and $n \ge l$, this coefficient is positive. Therefore,
\begin{equation*}
    \sum_i F^{ii} \ge \frac{2(n-l+2)}{l-1} \sqrt{\frac{(l-1)(n-l+1)}{l(n-l+2)}} \sqrt{f}.
\end{equation*}
\end{proof}
We establish the following inequality, which will be used in the proof of the doubling inequality.
\begin{lemma}
\label{n-2_trace_upperbound}
Let $n \ge 3$. Suppose $\lambda = (\lambda_1, \dots, \lambda_n)$ satisfies $\lambda_1 \ge \dots \ge \lambda_n > 0$ and $F(\lambda) = \frac{\sigma_n}{\sigma_{n-2}}(\lambda) = f > 0$. Then the trace of the linearized operator satisfies
\begin{equation*}
\sum_i F^{ii} < f \sum_j \frac{1}{\lambda_j}.
\end{equation*}
\end{lemma}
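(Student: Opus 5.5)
Since every $\lambda_j > 0$, I will work directly with the explicit formula in Lemma \ref{n-2properties}. Write $S_1 = \sum_j \frac{1}{\lambda_j}$ and $S_2 = \sum_j \frac{1}{\lambda_j^2}$. The lemma gives
\begin{equation*}
F^{aa} = \frac{F^2}{\lambda_a^2}\sum_{b\neq a}\frac{1}{\lambda_b} = \frac{F^2}{\lambda_a^2}\left(S_1 - \frac{1}{\lambda_a}\right),
\end{equation*}
so summing over $a$ yields
\begin{equation*}
\sum_a F^{aa} = F^2\left(S_1 S_2 - \sum_a \frac{1}{\lambda_a^3}\right).
\end{equation*}

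Next I express $F$ in terms of the same symmetric quantities. Using $\frac{1}{F} = \sum_{i<j}\frac{1}{\lambda_i\lambda_j} = \frac12\left(S_1^2 - S_2\right)$, the claimed inequality $\sum_i F^{ii} < f S_1$ is equivalent, after dividing by $F = f > 0$, to
\begin{equation*}
F\left(S_1S_2 - \sum_a \lambda_a^{-3}\right) < S_1.
\end{equation*}
This in turn is equivalent to
\begin{equation*}
2\left(S_1S_2 - \sum_a \lambda_a^{-3}\right) < S_1\left(S_1^2 - S_2\right).
\end{equation*}
Setting $x_j = 1/\lambda_j > 0$ and $p_m = \sum_j x_j^m$, the target becomes the purely algebraic inequality
\begin{equation*}
p_1^3 - 3p_1p_2 + 2p_3 > 0.
\end{equation*}

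The key step is to recognize the left-hand side as a power-sum expression for an elementary symmetric polynomial. By Newton's identities, $6e_3 = p_1^3 - 3p_1p_2 + 2p_3$. The desired inequality is therefore exactly $e_3(x_1,\dots,x_n) > 0$, which holds because $n \ge 3$ and all $x_j > 0$. This is precisely where the hypothesis $n\ge 3$ enters: for $n=2$ we have $e_3 = 0$, and the inequality becomes an equality.

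The only real obstacle is the bookkeeping: clearing the factor $F$ correctly and matching the coefficients in Newton's identity. As a check, I would verify the identity on a single variable, where $x^3 - 3x^3 + 2x^3 = 0$, and on three equal variables $x_1=x_2=x_3=1$, where $27 - 27 + 6 = 6 = 6e_3$. An alternative that avoids Newton's identities is to expand $S_1(S_1^2 - S_2) - 2S_1S_2 + 2\sum_a\lambda_a^{-3}$ directly. Every term then cancels except $6\sum_{a<b<c}\frac{1}{\lambda_a\lambda_b\lambda_c}$, which is strictly positive.
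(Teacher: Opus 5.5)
Your proof is correct: in the variables $x_j = 1/\lambda_j$ you correctly get $\sum_a F^{aa} = F^2(p_1p_2 - p_3)$ and $1/F = \tfrac12(p_1^2 - p_2)$, and Newton's identity $6e_3 = p_1^3 - 3p_1p_2 + 2p_3$ closes the argument. The paper takes a shorter route. It specializes the quotient-rule trace formula from the proof of Lemma \ref{general_trace_lower_bound}, which rests on $\sum_i \partial_{\lambda_i}\sigma_j = (n-j+1)\sigma_{j-1}$, to obtain $\sum_i F^{ii} = \frac{\sigma_{n-1}}{\sigma_{n-2}} - 3f\frac{\sigma_{n-3}}{\sigma_{n-2}}$. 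It then rewrites the first term as $f\sigma_{n-1}/\sigma_n = f\sum_j \lambda_j^{-1}$ using $\sigma_n = f\sigma_{n-2}$, and drops the negative second term. The two proofs establish the same identity in dual coordinates. Since $e_3(\lambda_1^{-1},\dots,\lambda_n^{-1}) = \sigma_{n-3}(\lambda)/\sigma_n(\lambda)$, undoing your normalization by $1/F$ gives
\begin{equation*}
f\sum_j \frac{1}{\lambda_j} - \sum_i F^{ii} = 3f^2\, e_3(\lambda_1^{-1},\dots,\lambda_n^{-1}) = 3f\,\frac{\sigma_{n-3}}{\sigma_{n-2}},
\end{equation*}
which is exactly the term the paper discards. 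The paper's version is shorter because it reuses an identity already derived, and it exhibits the slack directly without clearing denominators. Your version needs only the explicit formula of Lemma \ref{n-2properties}, using Newton's power-sum identities in place of the $\sigma_k$-derivative trace identity. It also shows explicitly where $n \ge 3$ enters and why equality holds for $n = 2$; the paper uses $n\ge 3$ silently through $\sigma_{n-3} > 0$.
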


\begin{proof}
Using the trace formula derived from the quotient rule, we have
\begin{equation*} 
\sum_i F^{ii} = \frac{\sigma_{n-1}}{\sigma_{n-2}} - 3 f \frac{\sigma_{n-3}}{\sigma_{n-2}}.
\end{equation*}

Using the relation $\sigma_n = f\sigma_{n-2}$ in the first term yields
\begin{equation*} 
\sum_i F^{ii} = f \frac{\sigma_{n-1}}{\sigma_{n}} - 3 f \frac{\sigma_{n-3}}{\sigma_{n-2}} < f \frac{\sigma_{n-1}}{\sigma_{n}} = f \sum_j \frac{1}{\lambda_j}.
\end{equation*}
\end{proof}
We restate some upper and lower bounds for the eigenvalues of the solutions of Hessian quotient equations.
\begin{lemma}[\cite{Lu2025_top_quotient} Lemma 2.3 and Lemma 2.4]
\label{lu_eigenvalue_bounds}
Let $n \ge 2$ and $ k \in \{ n-1,n-2\}$ with $k\ge 1$. Let $u \in C^4(B_{2}(0))$ be a convex solution of \eqref{main_equation_1} with $F(D^2u) = f(x,u,Du) > 0$. Then the eigenvalues of $D^2u$ satisfy
\begin{equation*}
f \leq \lambda_n \leq C(n)f \quad \mathrm{if} \quad k = n-1,
\end{equation*}
and
\begin{equation*}
\lambda_{n-1} \geq \sqrt{\frac{f}{C(n)}} > 0, \quad \lambda_n \le \sqrt{C(n)f} \quad \mathrm{if} \quad k = n-2.
\end{equation*}
\end{lemma}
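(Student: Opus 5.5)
We prove the two cases separately, working directly with the representations of $F$ from Lemma \ref{n-1properties} and Lemma \ref{n-2properties}.

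\emph{Case $k=n-1$.} Here $\frac1f=\sum_i\frac1{\lambda_i}$. Every term in this sum is positive and $\lambda_n$ is the smallest eigenvalue, so
\begin{equation*}
\frac{1}{\lambda_n}\le \frac1f\le \frac{n}{\lambda_n}.
\end{equation*}
This gives $f\le\lambda_n\le nf$, so we may take $C(n)=n$.

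\emph{Case $k=n-2$, upper bound on $\lambda_n$.} Here $\frac1f=\sum_{i<j}\frac{1}{\lambda_i\lambda_j}$. Each product satisfies $\lambda_i\lambda_j\ge\lambda_n^2$. Summing over the $\binom n2$ pairs gives
\begin{equation*}
\frac1f\le\binom n2\frac{1}{\lambda_n^2},
\end{equation*}
hence $\lambda_n^2\le\binom n2 f$.

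\emph{Case $k=n-2$, lower bound on $\lambda_{n-1}$.} Keep only the term with $(i,j)=(n-1,n)$:
\begin{equation*}
\frac1f\ge\frac{1}{\lambda_{n-1}\lambda_n}.
\end{equation*}
This gives $\lambda_{n-1}\lambda_n\ge f$. Since $\lambda_{n-1}\ge\lambda_n$, we have $\lambda_{n-1}^2\ge\lambda_{n-1}\lambda_n\ge f$, so $\lambda_{n-1}\ge\sqrt f$.

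Taking $C(n)=\max\{n,\binom n2,1\}$ then covers all the stated inequalities. There is no real obstacle: the only care needed is pairing the ordering $\lambda_1\ge\dots\ge\lambda_n$ with the correct term of the reciprocal sums.
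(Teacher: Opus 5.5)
Your proof is correct and is the same elementary argument behind the cited Lemmas 2.3 and 2.4 of Lu (the paper itself gives no proof and only cites them): you read the bounds directly off $1/f=\sum_i 1/\lambda_i$ and $1/f=\sum_{i<j}1/(\lambda_i\lambda_j)$ by comparing each term with the one built from the smallest eigenvalue(s). The one point to state explicitly is that these representations require $\lambda_n>0$. This follows from convexity together with $f>0$: if $\lambda_n=0$ then $\sigma_n=0$ and hence $F=0$, as the paper notes in the remark right after the lemma.
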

\begin{remark}
In particular, all eigenvalues are positive.
For the case $k=n-1$, this is a direct consequence.
For the case $k=n-2$, suppose it were true that $\lambda_n = 0$, then $\sigma_{n-2}>0$ and $\sigma_n =0$. This is impossible because $F(D^2u) = f > 0$. 
\end{remark}

\begin{lemma} [\cite{LuTsai_general_quotient} Lemma 2.3] \label{lem:2.3}
Let $n \ge 2$, $1 < l \le n$, and let $F = \frac{\sigma_{l}}{\sigma_{l-1}}$. Suppose $\lambda_1 \ge \dots \ge \lambda_n > 0$. Then we have
\begin{equation*}
    \frac{1}{c(n,l) F} \le \lambda_l \le \frac{c(n,l)}{F},
\end{equation*}
where $c(n,l)$ is a positive constant depending only on $n$ and $l$.
\end{lemma}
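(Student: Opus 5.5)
The plan is a direct comparison of elementary symmetric functions using the ordering $\lambda_1 \ge \dots \ge \lambda_n > 0$. Before starting, there is a scaling issue with the display as printed. Both $F = \frac{\sigma_l}{\sigma_{l-1}}$ and $\lambda_l$ are homogeneous of degree one under $\lambda \mapsto t\lambda$, whereas $\frac{1}{F}$ has degree $-1$. So the bounds $\frac{1}{c F} \le \lambda_l \le \frac{c}{F}$ cannot hold for all positive $\lambda$: replacing $\lambda$ by $t\lambda$ with $t\to\infty$ breaks the upper bound. The scale-invariant statement, and the one consistent with Lemma \ref{lu_eigenvalue_bounds} in the case $l=n$ (where $f \le \lambda_n \le C(n) f$), is
\begin{equation*}
\frac{F}{c(n,l)} \le \lambda_l \le c(n,l)\, F .
\end{equation*}
I will prove this form and read the printed display as a typo for it.

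\emph{Lower bound for $\lambda_l$, i.e.\ $F \le c\,\lambda_l$.} Each of the $\binom{n}{l}$ monomials in $\sigma_l$ is a product of $l$ distinct eigenvalues. By the ordering, each such product is at most $\lambda_1\cdots\lambda_l$, so $\sigma_l \le \binom{n}{l}\lambda_1\cdots\lambda_l$. On the other hand, all terms of $\sigma_{l-1}$ are positive and one of them is $\lambda_1\cdots\lambda_{l-1}$, so $\sigma_{l-1} \ge \lambda_1\cdots\lambda_{l-1}$. Dividing gives $F \le \binom{n}{l}\lambda_l$.

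\emph{Upper bound for $\lambda_l$, i.e.\ $\lambda_l \le c\,F$.} By the same reasoning, $\sigma_l \ge \lambda_1\cdots\lambda_l$ (keep only the top monomial) and $\sigma_{l-1} \le \binom{n}{l-1}\lambda_1\cdots\lambda_{l-1}$. Dividing gives $F \ge \lambda_l/\binom{n}{l-1}$. We may therefore take $c(n,l)=\max\{\binom{n}{l},\binom{n}{l-1}\}$.

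There is no real obstacle. The only point needing care is the bound on monomials: a product $\lambda_{i_1}\cdots\lambda_{i_m}$ with $i_1<\dots<i_m$ satisfies $\lambda_{i_j}\le\lambda_j$ for each $j$, because $i_j \ge j$. This is where the ordering and the positivity of all eigenvalues are used. The hypothesis $l>1$ ensures $\sigma_{l-1}$ is a genuine product-sum with $\lambda_1\cdots\lambda_{l-1}$ well defined; for $l=1$ one would have $\sigma_0 = 1$ and the statement is trivial anyway.
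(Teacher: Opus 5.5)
Your proof is correct. The paper gives no proof of its own; it imports the lemma from Lu--Tsai, so there is no competing argument to compare against. Your comparison of each monomial of $\sigma_j$ with $\lambda_1\cdots\lambda_j$, using $i_j\ge j$ and positivity, is the standard route. It yields the explicit constant $c(n,l)=\max\{\binom{n}{l},\binom{n}{l-1}\}$.

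You are also right that the printed display is misstated. Since $F$ and $\lambda_l$ are both $1$-homogeneous, the printed bounds $\frac{1}{cF}\le\lambda_l\le\frac{c}{F}$ already fail for $n=l=2$ with $\lambda_1=\lambda_2=t$, where $F=t/2$. The upper bound fails as $t\to\infty$ and the lower bound fails as $t\to 0$. So $F/c\le\lambda_l\le cF$ is the only sensible reading, and it reduces to Lemma~\ref{lu_eigenvalue_bounds} when $l=n$, with $C(n)=n$.

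Nothing downstream depends on the misprint. The paper only applies the lemma with $F=f$, where $f$ and $1/f$ are bounded, and uses it for two things:
\begin{itemize}
\item the bound $\lambda_l\le C$ in the proof of Proposition~\ref{jacobi_general_l_k};
\item a positive lower bound on $\lambda_1,\dots,\lambda_l$, hence on $\sigma_k(D^2Q)$, in Section~\ref{s5}.
\end{itemize}
Both follow from your version.

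The same slip occurs in Lemma~\ref{lem:2.4}. Its scale-consistent form is $F/c\le\lambda_{l-1}\lambda_l\le cF$, $\lambda_{l-1}\ge\sqrt{F/c}$, and $\lambda_l\le\sqrt{cF}$. This is proved by the identical monomial comparison, now between $\sigma_l$, $\sigma_{l-2}$ and the corresponding top products.
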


\begin{lemma} [\cite{LuTsai_general_quotient} Lemma 2.4] \label{lem:2.4}
Let $n \ge 3$, $2 < l \le n$, and let $F = \frac{\sigma_{l}}{\sigma_{l-2}}$. Suppose $\lambda_1 \ge \dots \ge \lambda_n > 0$. Then we have
\begin{align*}
    \frac{1}{c(n,l) F} &\le \lambda_{l-1}\lambda_l \le \frac{c(n,l)}{F}, \\
    \lambda_{l-1} &\ge \frac{1}{c(n,l)\sqrt{F}}, \\
    \lambda_l &\le \frac{c(n,l)}{\sqrt{F}},
\end{align*}
where $c(n,l)$ is a positive constant depending only on $n$ and $l$.
\end{lemma}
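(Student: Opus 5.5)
The plan is first to test the statement as worded against the homogeneity of $F$, since that decides whether a literal proof can exist. After that I would indicate which version can be proved and how.

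\textbf{Step 1: homogeneity test.} The operator $F=\frac{\sigma_l}{\sigma_{l-2}}$ is homogeneous of degree $2$ on the positive cone. The product $\lambda_{l-1}\lambda_l$ is also homogeneous of degree $2$, while $1/F$ has degree $-2$. Replacing $\lambda$ by $t\lambda$ therefore multiplies $\lambda_{l-1}\lambda_l$ by $t^2$ and $c(n,l)/F$ by $t^{-2}$. So no constant depending only on $n,l$ can make $\lambda_{l-1}\lambda_l\le c(n,l)/F$ hold on the whole cone.

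\textbf{Step 2: explicit counterexample.} Take $\lambda=(t,\dots,t)$ with $t>0$. Then
\begin{equation*}
F=\frac{\binom{n}{l}}{\binom{n}{l-2}}\,t^2 ,\qquad \lambda_{l-1}\lambda_l=t^2 .
\end{equation*}
The upper bound $t^2\le c\binom{n}{l-2}\binom{n}{l}^{-1}t^{-2}$ fails as $t\to\infty$. Similarly, $\lambda_{l-1}=t\ge \frac{1}{c\sqrt F}\sim t^{-1}$ holds trivially for large $t$, but $\lambda_l=t\le c/\sqrt F\sim t^{-1}$ fails for large $t$. The lower bound $\lambda_{l-1}\lambda_l\ge 1/(cF)$ fails as $t\to 0$. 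Hence the lemma as literally worded is false, and the first step of any "proof" is to record this. The same check applies to Lemma~\ref{lem:2.3}: $F=\sigma_l/\sigma_{l-1}$ has degree $1$, so $\lambda_l\le c/F$ also fails under scaling.

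\textbf{Step 3: the version that can be proved.} The statement becomes true with $F$ replaced by its reciprocal, which is presumably the intended convention of the cited source:
\begin{equation*}
\frac{F}{c}\le\lambda_{l-1}\lambda_l\le cF,\qquad \lambda_{l-1}\ge \frac{\sqrt F}{c},\qquad \lambda_l\le c\sqrt F .
\end{equation*}
I would prove it as follows.

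1. Write $\sigma_j\approx\lambda_1\cdots\lambda_j$ up to $c(n,l)$ for $j\le l$, which holds by positivity and the ordering of the $\lambda_i$.
2. Deduce $F\approx \lambda_{l-1}\lambda_l$.
3. Obtain $\lambda_{l-1}^2\ge\lambda_{l-1}\lambda_l\gtrsim F$, which gives the lower bound on $\lambda_{l-1}$.
4. Obtain $\lambda_l^2\le\lambda_{l-1}\lambda_l\lesssim F$, which gives the upper bound on $\lambda_l$.

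\textbf{Main obstacle.} The mathematics is routine; the real issue is the mismatch between the stated form and homogeneity. My plan is therefore to present Steps 1–2 as a disproof of the literal statement and Step 3 as the corrected substitute. These downstream estimates are used only with $f$ bounded above and below, so either form yields the same qualitative bounds there.
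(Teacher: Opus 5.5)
The paper gives no proof of this lemma. It is imported from Lu--Tsai by citation only, so there is no internal argument to compare with. Your proposal is correct: the scaling objection refutes the statement as written, and your corrected version holds.

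\textbf{The disproof.} Since $F=\sigma_l/\sigma_{l-2}$ is $2$-homogeneous, the displayed inequalities cannot hold with constants depending only on $n,l$. Your diagonal family $\lambda=(t,\dots,t)$ refutes all four of them. You only note that the lower bound on $\lambda_{l-1}$ holds for large $t$, but it fails as $t\to0$.

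\textbf{Corroboration inside the paper.} Lemma~\ref{lu_eigenvalue_bounds} covers the case $l=n$, and it is stated in the scaling-consistent form: $f\le\lambda_n\le C(n)f$ for $k=n-1$, and $\lambda_{n-1}\ge\sqrt{f/C(n)}$, $\lambda_n\le\sqrt{C(n)f}$ for $k=n-2$. Lemmas~\ref{lem:2.3} and~\ref{lem:2.4} as displayed are exactly these bounds with $F$ replaced by $1/F$.

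\textbf{The corrected version.} Your statement and proof are right. Explicitly, $\lambda_1\cdots\lambda_j\le\sigma_j\le\binom{n}{j}\lambda_1\cdots\lambda_j$ gives $\binom{n}{l}^{-1}F\le\lambda_{l-1}\lambda_l\le\binom{n}{l-2}F$. The bounds $\lambda_{l-1}\ge\sqrt{F/\binom{n}{l}}$ and $\lambda_l\le\sqrt{\binom{n}{l-2}F}$ then follow from $\lambda_l\le\lambda_{l-1}$, as you describe.

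\textbf{Downstream uses.} Your remark is accurate. The paper only needs $\lambda_l\le C$ and $\lambda_l\ge C$ in Proposition~\ref{jacobi_general_l_k}, and positivity of $\sigma_k(D^2Q)$ in Section~\ref{s5}. All of these are used with $f$ and $1/f$ bounded, so either form suffices there and nothing in the main argument breaks.
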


We restate the concavity inequality. Taking $\tilde{\delta} = 1$, choosing $\xi_{\alpha\beta}$ to be diagonal, and restricting to $k \in \{1,2\}$ in \cite[Theorem 1.1]{GuanSroka2026}, we have the following simplified statement. 
\begin{lemma}[Guan-Sroka \cite{GuanSroka2026} Theorem 1.1]
\label{concave_simplified}
Let $n \ge 2$, $k \in \{n-1, n-2\}$ with $k \ge 1$ and let $F(\lambda) = \frac{\sigma_{n}}{\sigma_{k}}(\lambda)$ with $\lambda_{1} \ge \dots \ge \lambda_{n} > 0$. Then for any $\xi \in \mathbb{R}^{n}$, we have
\begin{equation*}
-\sum_{i,j} F^{ii,jj} \xi_{i}\xi_{j} - \frac{F^{11}\xi_{1}^{2}}{\lambda_{1}} \ge -\frac{1}{F} \left(\sum_i F^{ii}\xi_{i}\right)^{2} + M(n,k) \frac{F^{11}\xi_{1}^{2}}{\lambda_{1}},
\end{equation*}
where $M(n,k)$ is a positive constant depending on $n$ and $k$.
\end{lemma}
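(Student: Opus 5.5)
The plan is to obtain the lemma as a direct specialization of Guan--Sroka \cite[Theorem 1.1]{GuanSroka2026}, and to double-check the case $k=n-1$ by hand so that the constant $M(n,k)$ is seen to depend only on $n,k$. First I would match notation. Guan--Sroka treat quotients of the form $\sigma_n/\sigma_{n-\kappa}$ with $\kappa\in\{1,2\}$, so $\kappa=1,2$ correspond to our $k=n-1,n-2$. Their statement is for a general symmetric matrix direction $\xi_{\alpha\beta}$ and a parameter $\tilde\delta$. I would take $\tilde\delta=1$ and $\xi_{\alpha\beta}=\xi_\alpha\delta_{\alpha\beta}$. Then the off-diagonal terms (those involving $F^{pq,qp}$, $p\neq q$) drop out, and the second-order term reduces to $\sum_{i,j}F^{ii,jj}\xi_i\xi_j$. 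Moving the $\frac{F^{11}\xi_1^2}{\lambda_1}$ term across then gives exactly the displayed inequality.

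As an independent check for $k=n-1$, I would use Lemma \ref{n-1properties}, namely $F=1/H$ with $H=\sum_i\lambda_i^{-1}$. Differentiating once more gives
\[
-\sum_{i,j}F^{ii,jj}\xi_i\xi_j=2F^2\sum_i\frac{\xi_i^2}{\lambda_i^3}-\frac{2}{F}\Big(\sum_iF^{ii}\xi_i\Big)^2 .
\]
Set $a_i=F^2\xi_i^2/\lambda_i^3=F^{ii}\xi_i^2/\lambda_i$ and $A=\sum_ia_i$. The claim then reduces to
\[
2A-\frac1F\Big(\sum_iF^{ii}\xi_i\Big)^2\ge(1+M)\,a_1 .
\]
Cauchy--Schwarz with weights $1/\lambda_i$ gives $\frac1F(\sum_iF^{ii}\xi_i)^2\le A$, which by itself yields only $M=0$. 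To gain a positive $M$ I would separate the index $1$: write $\sum_iF^{ii}\xi_i=p+q$ with $p=F^{11}\xi_1$, and apply Cauchy--Schwarz to $q$ over $i\ge2$. This gives $q^2\le F(1-F/\lambda_1)\sum_{i\ge2}a_i$ and $p^2=F\cdot\frac{F}{\lambda_1}a_1$. Writing $\theta=F/\lambda_1$, note that $\theta\le 1/n$ because $H\ge n/\lambda_1$. The elementary inequality $(p+q)^2\le\frac{p^2}{\theta}+\frac{q^2}{1-\theta}$ then gives $\frac1F(p+q)^2\le A$, with equality only when $\xi_i\propto\lambda_i$. The sharper choice I would make is $(p+q)^2\le(1+\varepsilon)q^2+(1+\varepsilon^{-1})p^2$ with $\varepsilon$ fixed. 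Using $\theta\le1/n$ this gives
\[
\frac1F(p+q)^2\le(1+\varepsilon)A'+(1+\varepsilon^{-1})\tfrac1n\,a_1,\qquad A'=\sum_{i\ge2}a_i .
\]
Taking $\varepsilon=1$ leaves $2a_1-\tfrac2n a_1=(2-\tfrac2n)a_1$ on the left, so $M=1-\frac2n$, which is positive for $n\ge3$. For $n=2$ I would instead optimize $\varepsilon$ using $\theta<1/2$ together with the constraint $H=\lambda_1^{-1}+\lambda_2^{-1}$; this case needs extra care.

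For $k=n-2$ I would rely on the citation rather than recompute. The direct route writes $F=1/P$ with $P=\sum_{i<j}(\lambda_i\lambda_j)^{-1}$ and repeats the above splitting argument, with Lemma \ref{n-2properties} supplying $F^{ii}$. The main obstacle is the mixed Hessian of $P$, whose terms $(\lambda_i^2\lambda_j^2)^{-1}$ are not diagonal-dominant. Obtaining a uniform $M(n,n-2)>0$ from them is precisely the content of Guan--Sroka's work, so there I would invoke their theorem and only verify the reduction described in the first paragraph.
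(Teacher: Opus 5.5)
Your proposal is correct and matches the paper: the paper gives no independent argument and simply specializes Guan--Sroka's Theorem 1.1 with $\tilde\delta=1$, diagonal $\xi_{\alpha\beta}$, and their index in $\{1,2\}$, exactly as in your first paragraph. Your hand check for $k=n-1$ is a correct aside, and $n=2$ needs no extra care: keep the factor $1-\theta$ in the bound on $q^2$ and take $\varepsilon=(1+\theta)/(1-\theta)$, which gives $M=\frac{n-1}{n+1}>0$ for every $n\ge2$ (note that $\theta=\tfrac12$ is attained when $\lambda_1=\lambda_2$, so the inequality $\theta<\tfrac12$ is not strict in general).
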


We restate the following generalization of Savin small perturbation theorem for equations $F(M,p,z,x)$ depending on $D^2u$, $Du$, $u$, and $x$.
\begin{theorem} [Generalization of Savin small perturbation theorem, \cite{Fan2026Savin} Theorem 1.6]
\label{generalized_savin}
Consider the fully nonlinear equation of the general form
\begin{equation*}
    F(D^{2}u, Du, u, x) = f(x) \quad \text{on } B_{1},
\end{equation*}
where $F: \mathcal{S}^{n} \times \mathbb{R}^{n} \times \mathbb{R} \times B_{1} \rightarrow \mathbb{R}$ is a function defined for $(M, p, z, x)$ satisfying the following hypotheses.

\begin{itemize}
    \item[H1)] $F(\cdot, p, z, x)$ is elliptic, i.e., $F(M+N, p, z, x) \ge F(M, p, z, x)$ for $M, N \in \mathcal{S}^{n}$ and $N \ge 0$.
    
    \item[H2)] $F(\cdot, p, z, x)$ is uniformly elliptic in a $\rho$-neighborhood of the origin in $\mathcal{S}^{n}$. That is, there exist constants $\Lambda > \lambda > 0$ such that for any $\|M\|, \|N\|, |p|, |z| \le \rho$ with $N \ge 0$, 
    $$\Lambda\|N\| \ge F(M+N, p, z, x) - F(M, p, z, x) \ge \lambda\|N\|.$$
    
    \item[H3)] $0$ is a solution, i.e., $F(0, 0, 0, x) \equiv 0$. Moreover, $F$ satisfies the following structure condition. For any $\|M\|, |p|, |q|, |z|, |s| \le \rho$ and $x \in B_{1}$, there holds 
    $$|F(M, p, z, x) - F(M, q, s, x)| \le b_{0}|p-q| + c_{0}|z-s|.$$
    
    \item[H4)] $F \in C^{1}$ and its derivative $D_{M}F$ is uniformly continuous in the $\rho$-neighborhood of $\{(0, 0, 0, x) : x \in B_{1}\}$ with a modulus of continuity $\omega_{F}$.
\end{itemize}

Let $u \in C(B_{1})$ be a viscosity solution to the equation. For any $\alpha \in (0,1)$, there exist constants $ \delta, C > 0$, depending only on $n, \alpha, \rho, \lambda, \Lambda, b_{0}, c_{0}$, and $\omega_{F}$, such that if 
$$|F(M, p, z, x) - F(M, p, z, x^{\prime})| \le \delta|x-x^{\prime}|^{\alpha}$$
for all $\|M\|, |p|, |z| \le \rho$, $x, x^{\prime} \in B_{1}$, and
$$\|u\|_{L^{\infty}(B_{1})} \le \delta, \quad \|f\|_{C^{0,\alpha}(B_{1})} \le \delta,$$
then $u \in C^{2,\alpha}(B_{1/2})$ with 
$$\|u\|_{C^{2,\alpha}(B_{1/2})} \le C.$$
\end{theorem}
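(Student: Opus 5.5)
The plan follows Savin's original small perturbation argument \cite{Savin2007}: an \emph{improvement of flatness} iteration, driven by a Harnack inequality that holds only for solutions that are already flat. The extra $(p,z,x)$ dependence is treated as a perturbation at every scale, controlled by H3 and the $x$-H\"older smallness.

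\textbf{Step 1: Harnack inequality for flat solutions.} Since $F$ is only uniformly elliptic in a $\rho$-neighborhood of the origin (H2), the Krylov--Safonov machinery does not apply directly. Instead I would prove Savin's measure estimate by sliding paraboloids of opening $\sim \delta^{-1}\|u\|_{L^\infty}$ from below. At the contact points we have $|D^2 u|, |Du|, |u| \le \rho$, so H2 applies there, and H3 bounds the change in the lower order arguments by $b_0|p|+c_0|z|$. This yields an $L^\varepsilon$-type estimate and then a Harnack inequality for viscosity solutions with $\|u\|_{L^\infty}\le\delta$. In particular it gives a uniform H\"older modulus, valid down to scales comparable to $\|u\|_{L^\infty}$.

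\textbf{Step 2: compactness and approximation.} Take a sequence $u_j$ with $\|u_j\|_{L^\infty}=\delta_j\to0$, and consider the rescalings $v_j=u_j/\delta_j$ together with the linearizations $F_j(M,p,z,x)=\delta_j^{-1}F(\delta_j M,\delta_j p,\delta_j z,x)$. Step 1 gives equicontinuity of the $v_j$, and H4 lets the $F_j$ converge to a linear operator $D_MF(0,0,0,x_*)\cdot M$, frozen in $x$ by the $\delta$-H\"older smallness. Because $b_0$ and $c_0$ are multiplied by $\delta_j$ after rescaling, the lower order terms disappear in the limit. The limit $v$ therefore solves a constant-coefficient uniformly elliptic equation and has $C^{2,\alpha'}$ estimates. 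Consequently, for small $\delta$, the function $u$ is $\eta\delta$-close in $B_r$ to a quadratic polynomial $P$ with $\|P\|\le C\delta$.

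\textbf{Step 3: iteration.} Subtract $P$ and rescale by $r^2$. The new function $\tilde u(x)=r^{-2}(u-P)(rx)$ solves an equation $\tilde F$ of the same type. Its Hessian argument is shifted by $D^2P$, which stays in the $\rho$-neighborhood as long as the accumulated polynomials stay small. Its gradient and zeroth order arguments pick up factors $r$ and $r^2$, so H3 holds with improved constants. Its $x$-oscillation becomes $r^{\alpha}\delta$-small in H\"older norm. Iterating with geometric decay $r^{k(2+\alpha)}$ gives a sequence of approximating quadratics converging to a $C^{2,\alpha}$ expansion at every point of $B_{1/2}$, which yields the $C^{2,\alpha}$ bound.

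\textbf{Main obstacle.} The hard step is Step 1, because ellipticity is only local in $M$ and $(p,z)$. The sliding argument must ensure that at contact points the touching paraboloid has small gradient and small Hessian, so that H2 and H3 are usable there, while the $b_0|Du|$ term is absorbed at small scales. I would first try Savin's choice of paraboloid opening $a\sim\|u\|_\infty^{-1}$ and restrict the centers of the sliding paraboloids to a small ball, which keeps the contact gradients $\lesssim\rho$.
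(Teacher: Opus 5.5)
The paper does not prove this statement. It is quoted from Fan \cite{Fan2026Savin}, Theorem 1.6, and used only as a black box in Section 5. Fan's argument follows Savin's scheme \cite{Savin2007}: a Harnack inequality for flat solutions, improvement of flatness by compactness, then iteration. Your outline has the same architecture, but Steps 2--3 do not close as written.

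\textbf{Main gap: the smallness hierarchy.} In the statement, $\|u\|_{L^\infty}$, $\|f\|_{C^{0,\alpha}}$ and the $x$-H\"older constant of $F$ are all bounded by the \emph{same} $\delta$.
\begin{itemize}
\item If you normalize by $\delta_j=\|u_j\|_{L^\infty}$, then $f_j/\delta_j$ and the normalized $x$-oscillation have size $\delta/\delta_j\ge 1$, not $o(1)$. Your Step 2 never mentions $f$.
\item Consequently the limit keeps a nonzero right-hand side that is only $C^{0,\alpha}$. It then has no $C^{2,\gamma}$ estimate with $\gamma>\alpha$.
\item You only get $\|v-P_v\|_{L^\infty(B_r)}\le C_0r^{2+\alpha}$ with an uncontrolled $C_0$, so there is no contraction to iterate.
\item For the same reason, the ratio ``perturbation/flatness'' stays equal to $1$ throughout your Step 3.
\end{itemize}
You need the lemma in relative form: if $\|u\|_{L^\infty}\le\varepsilon\le\varepsilon_0$ and the data are $\le\eta\varepsilon$, then $\|u-P\|_{L^\infty(B_r)}\le\varepsilon r^{2+\alpha}$. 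Prove it by compactness with $\eta_j\to0$, so the limit is $x$-independent and homogeneous, hence $C^{2,\gamma}$ for some $\gamma>\alpha$. Choose $r$ first, then $\eta$, then $\varepsilon_0$. Start the iteration at $\varepsilon:=\delta/\eta$, which is legitimate after shrinking $\delta$.

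\textbf{Further errors.}
\begin{itemize}
\item \emph{Lower-order terms in Step 2.} For $F_j(M,p,z,x)=\delta_j^{-1}F(\delta_jM,\delta_jp,\delta_jz,x)$, the Lipschitz constants in $(p,z)$ are again exactly $b_0,c_0$; they are not multiplied by $\delta_j$. Only the spatial rescaling $x\mapsto rx$ shrinks them. The limit is $A:D^2v+H(Dv,v)=0$, with $A$ constant (linearity in $M$ comes from H4) and $H$ Lipschitz. This is repairable, since Schauder bootstrapping still gives $C^{2,\gamma}$, but the limit equation must be identified correctly.
\item \emph{Shifted slots in Step 3.} The gradient and zeroth-order slots are shifted by $DP(rx)$ and $P(rx)$, not merely rescaled. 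To keep $\tilde F(0,0,0,x)\equiv0$ (H3), you must subtract $F(D^2P,DP(rx),P(rx),rx)$ and move it to the new right-hand side. This has two consequences:
\begin{itemize}
\item You must correct $P$ so that $F(D^2P,DP(0),P(0),0)=f(0)$; compactness only gives this up to $o(\varepsilon)$.
\item It creates a Lipschitz-in-$x$ error of size $\sim r\|P\|\sim r\varepsilon$. Relative to the new flatness $\varepsilon r^\alpha$ this is $\sim r^{1-\alpha}$, not ``$r^\alpha\delta$''. Since $\eta$ is chosen after $r$, you cannot just declare it $\le\eta\varepsilon r^\alpha$. The lemma must tolerate a bounded Lipschitz part of the data, with $r$ then taken small enough to absorb it.
\end{itemize}
\item \emph{Paraboloid opening in Step 1.} The opening must be comparable to $\|u\|_{L^\infty}$, hence $\ll\rho$. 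Your proposed $a\sim\|u\|_{L^\infty}^{-1}$ would push contact Hessians outside the region where H2 holds.
\item \emph{Hessian bound at contact points.} Touching from below gives only $D^2u\ge -aI$. The upper bound at contact points comes from the supersolution inequality combined with H1 and H2, by truncating the positive part at size $\rho$. This is where the global monotonicity H1 is needed, and your sketch omits it.
\end{itemize}
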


\section{Jacobi Inequality}
\label{s3}
In this section, we establish the Jacobi inequalities for our equations in the classical sense. 

Our proof follows the original arguments in Lemma 4.1 of \cite{Lu2025_top_quotient} and Lemma 2.5 of \cite{LuTsai_general_quotient} with a few modifications. 
The original arguments established the Jacobi inequalities for $b_1:=\ln \lambda_1$ in the viscosity sense. 
At points where the largest eigenvalue of $D^2u$ has multiplicity $m$, their proof utilizes Lemma 5 in Brendle, Choi, and Daskalopoulos \cite{vis_lemma}, which guarantees that the third order terms $u_{abi}$ vanish for distinct indices $a \neq b \le m$.

In our proof, assuming the maximum eigenvalue of $D^2u$ has multiplicity $m$ at $x_0$, the function $b_m := \frac{1}{m}\sum_{a=1}^m \ln \lambda_a$ is locally smooth near $x_0$. We derive this inequality in the classical sense for all $1 \le m \le n$ because it will be used for the doubling inequality later in the proof. However, this classical approach encounters two obstacles. 
First, the terms $u_{abi}$ do not naturally vanish for indices $a \neq b \le m$. The terms involving $u_{abi}^2$ with negative coefficients must be carefully absorbed. While the top quotient cases $(n, n-1)$ and $(n, n-2)$ admit simpler arguments, the specific case of $(n,k) = (3,1)$ and the general quotient $\frac{\sigma_l}{\sigma_k}$ require a more delicate treatment. We must proceed case by case based on the multiplicity $m$, which in turn requires the maximal eigenvalue $\lambda$ to be sufficiently large.

In particular, for the general $l$ case, the coefficient of $u_{abi}^2$ is highly complicated. Precisely, the coefficient of $u_{abi}^2/\lambda^2$, where $\lambda$ is the maximum eigenvalue, is given by \eqref{u_aab_l-1_1}. To simplify the computation, we introduce the auxiliary quantities $G^{ab}$ (defined in \eqref{G_def}) and $T_j$ (defined in \eqref{T_j}). After simplification, we obtain the lower bound \eqref{u_aab_l-1_2}.

However, we cannot simply take the limit as $\lambda \rightarrow \infty$ in \eqref{u_aab_l-1_2}. In addition to the maximal eigenvalue $\lambda$ and the bounded eigenvalues $\lambda_l, \dots, \lambda_n$, there exist potentially unbounded eigenvalues $\lambda_{m+1}, \dots, \lambda_{l-1}$. A direct limit argument fails because the choice of $\lambda$ might depend on these terms. We overcome this difficulty by factoring out $\lambda_{m+1}, \dots, \lambda_{l-1}$ to obtain \eqref{u_aab_l-1_3n-1} and \eqref{u_aab_l-1_3n-2}. This allows us to take the limit $\lambda \rightarrow \infty$.

Second, differentiating the equation twice generates additional second order and third order terms involving the derivatives of $f$. As explained in the introduction, to eliminate the second order terms, we impose the structural condition that $\log f$ is convex with respect to $Du$. The third order terms will be handled during the proof of the doubling inequality later. For the general quotient $\frac{\sigma_l}{\sigma_k}$, we impose a similar structural condition on $f$.

Before proceeding with the computations, we clarify our derivative notation for the function $f(x, u, Du)$. We use $\frac{d}{dx_a}f$ to denote the total derivative with respect to $x_a$. 
We denote the partial derivatives of $f(x, z, p)$ by $\frac{\partial f}{\partial x_a}$, $\frac{\partial f}{\partial z}$ (or $f_z$), and $\frac{\partial f}{\partial p_i}$ (or $f_{p_i}$), corresponding to the $x$, $u$, and $Du$ variables respectively.

Additionally, we use the following index conventions for convenience. Indices $a, b, c$ usually represent indices $1, \dots, m$, where $m$ is the multiplicity of the largest eigenvalue of $D^2u$ at $x_0$. When any of $a, b$, or $c$ appears together in a term, it implies that they are mutually distinct. Indices $\alpha, \beta$ usually represent indices $m+1, \dots, n$, and when they are being used together, it implies that $\alpha \neq \beta$. Other indices like $i, j, k$ are used in the standard way to represent all indices $1, \dots, n$.
\begin{proposition}
\label{jacobi}
Let $n \ge 2$, $k \in \{n-1, n-2\}$ with $k \ge 1$, and $f > 0$ be a $C^{1,1}$ function such that $\log f$ is convex in the gradient variable $Du$. Suppose $u \in C^4(B_{2}(0))$ is a convex solution to \eqref{main_equation_1}. At any fixed point $x_0 \in B_2(0)$ where $D^2u$ is diagonalized with its eigenvalues ordered as
\begin{equation*}
    \lambda_1 = \dots = \lambda_m > \lambda_{m+1} \ge \dots \ge \lambda_n > 0,
\end{equation*} 
the function $b_m := \frac{1}{m} \sum_{a=1}^m \ln \lambda_a$ is locally smooth near $x_0$. Furthermore, there exists a constant $K_0 \ge 1$, depending only on $n, k, \|u\|_{C^{0,1}(B_2(0))}, \|1/f\|_{L^\infty}$, and $\|f\|_{C^{1,1}}$, such that if $\lambda_1 \ge K_0$, $b_m$ satisfies the Jacobi inequality 
\begin{equation*}
\sum_i F^{ii}(b_m)_{ii} \ge M(n,k)\sum_i F^{ii}((b_m)_i)^2 + \sum_i f_{p_i} (b_m)_i  - C
\end{equation*}
at $x_0$ in the classical sense. Here, $M(n,k)$ is a positive constant depending on $n$ and $k$, and $C$ is a positive constant depending only on  $n, k, \|u\|_{C^{0,1}(B_2(0))}, \|1/f\|_{L^\infty}$, and $\|f\|_{C^{1,1}}$.

Equivalently, under the same assumptions, $a_m:= \exp(M(n,k) b_m)$ satisfies 
\begin{equation}
\label{a_jacobi_1}
\sum_i F^{ii} (a_m)_{ii} \ge 2 \sum_i \frac{F^{ii}(a_m)_i^2}{a_m} - C a_m + \sum_i f_{p_i} (a_m)_i
\end{equation}
at $x_0$ in the classical sense.
\end{proposition}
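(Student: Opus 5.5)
We will compute $\sum_i F^{ii}(b_m)_{ii}$ directly at $x_0$. Smoothness of $b_m$ near $x_0$ follows because the sum of the top $m$ eigenvalues of $D^2u$ is a smooth function of the matrix wherever $\lambda_m>\lambda_{m+1}$: the spectral projection onto that eigenspace depends analytically on the matrix. Standard perturbation formulas then give at $x_0$
\begin{equation*}
(b_m)_i=\frac{1}{m\lambda}\sum_{a\le m}u_{aai},
\end{equation*}
\begin{equation*}
(b_m)_{ii}=\frac{1}{m}\sum_{a\le m}\Big(\frac{u_{aaii}}{\lambda}-\frac{u_{aai}^2}{\lambda^2}+\sum_{\beta>m}\frac{2u_{a\beta i}^2}{\lambda(\lambda-\lambda_\beta)}\Big)-\frac{1}{m}\sum_{a\ne b\le m}\frac{u_{abi}^2}{\lambda^2},
\end{equation*}
where $\lambda=\lambda_1$. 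The last term is the classical-sense cross term that the viscosity argument does not see.

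Next we differentiate the equation twice in the direction $e_a$, $a\le m$:
\begin{equation*}
\sum_i F^{ii}u_{aaii}=-\sum F^{ij,rs}u_{ija}u_{rsa}+\frac{d^2}{dx_a^2}f .
\end{equation*}
We expand $\frac{d^2}{dx_a^2}f$ and use that $\log f$ is convex in $p$, i.e.\ $f_{p_ip_j}\ge f_{p_i}f_{p_j}/f$ as forms. Then the second-order part $\sum f_{p_ip_j}u_{ia}u_{ja}=\lambda^2 f_{p_ap_a}$ is at least $\lambda^2 f_{p_a}^2/f$, and after dividing by $\lambda$ it is nonnegative up to bounded terms. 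The mixed terms $f_{x p}$ and $f_{zp}$ contribute $O(\lambda)$, hence $O(1)$ after division. The third-order part $\sum_i f_{p_i}u_{aai}$ gives exactly $\sum_i f_{p_i}(b_m)_i$ after averaging. The linear remainders, of order $|f_{p_a}|\lambda^{-1}$ times lower terms, are absorbed into $-C$ for $\lambda\ge K_0$.

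The remaining third-order quantity splits as follows:
\begin{equation*}
-\sum_{i,j}F^{ii,jj}u_{iia}u_{jja}\;+\;\sum_{p\ne q}(-F^{pq,qp})u_{pqa}^2 .
\end{equation*}
For the diagonal part, after averaging over $a$ and using $u_{aai}$ symmetry, we apply Lemma \ref{concave_simplified} with $\xi_i=u_{iia}$. The term $-\frac1F(\sum F^{ii}u_{iia})^2=-\frac1F\big(\frac{d}{dx_a}f\big)^2$ is $\ge -C\lambda^2$, which is fine after division by $\lambda$. This yields the gain $(1+M)F^{aa}u_{aaa}^2/\lambda$. The off-diagonal terms are handled as in Lu's argument. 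The pairs $(a,\beta)$ with $\beta>m$ use Lemma \ref{lem:2.2}, $-F^{a\beta,\beta a}=(F^{\beta\beta}-F^{aa})/(\lambda-\lambda_\beta)$, which combines with $2F^{ii}u_{a\beta i}^2/(\lambda(\lambda-\lambda_\beta))$ to control the terms $F^{\beta\beta}u_{aa\beta}^2/\lambda^2$, i.e.\ $F^{ii}(b_m)_i^2$ for $i>m$. The $i\le m$ components come from the Guan–Sroka gain.

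\textbf{Main obstacle.} The hardest step is absorbing the terms with $u_{abi}$, $a\ne b\le m$, which carry coefficient $-\frac{1}{m\lambda^2}\sum_i F^{ii}$ in the expansion of $(b_m)_{ii}$. The plan is to pair each such term with $(-F^{ab,ba})u_{abi}^2/\lambda$ coming from the differentiated equation, using Lemma \ref{lem:F_abba_exact}. For $k=n-1$ we have $-F^{ab,ba}/\lambda=2F^{aa}/\lambda^2$, which dominates $F^{aa}/\lambda^2$ when $i\in\{a,b\}$. For $i\notin\{a,b\}$ there are also the terms $u_{abi}^2$ with $i>m$; their coefficient $F^{ii}$ may be large, but they are matched by $(-F^{ai,ia})u_{aib}^2$-type terms through the symmetry $u_{abi}=u_{aib}$. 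For $k=n-2$ the correction $-F^2/\lambda^4$ is lower order and is absorbed once $\lambda\ge K_0$, using $F^{aa}\gtrsim F^2\lambda^{-2}\sum_{c\ne a}\lambda_c^{-1}\ge F^2\lambda^{-3}$ together with the eigenvalue bounds of Lemma \ref{lu_eigenvalue_bounds}. We expect the small case $(n,k)=(3,1)$, where $m$ can equal $2$ and the slack is thin, to need a separate check on each value of $m$.

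Finally, combining all the pieces gives $\sum_iF^{ii}(b_m)_{ii}\ge M\sum_iF^{ii}(b_m)_i^2+\sum_i f_{p_i}(b_m)_i-C$. The statement \eqref{a_jacobi_1} follows by the chain rule with $a_m=e^{Mb_m}$, after renaming constants.
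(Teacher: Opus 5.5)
Your overall route is the paper's. It uses the perturbation formula for $(b_m)_{ii}$ with the cross term, the twice-differentiated equation, Lemma \ref{concave_simplified} on the diagonal block, Lemma \ref{lem:2.2} for the pairs $(a,\beta)$, Lemma \ref{lem:F_abba_exact} inside the top eigenspace, and a separate check for $(n,k)=(3,1)$.

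\textbf{The gap.} The handling of the $f$-terms is wrong at exactly the point where the hypothesis on $f$ must be used. You assert that $-\frac1F\bigl(\frac{d}{dx_a}f\bigr)^2\ge -C\lambda^2$ ``is fine after division by $\lambda$''. It is not: it leaves $-C\lambda$, which is unbounded and cannot be absorbed into the constant of the Jacobi inequality. At the same time, you spend the log-convexity only to declare $\lambda f_{p_ap_a}\ge 0$ and then discard it.

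If your claim were true, the crude bound $\lambda^2 f_{p_ap_a}\ge -C\lambda^2$ would be equally ``fine''. Then no structural condition on $f$ would be needed, which contradicts the counterexample in Section \ref{s6}.

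The inequality you wrote, $\lambda^2 f_{p_ap_a}\ge \lambda^2 f_{p_a}^2/f$, is the right one. It has to be spent against the leading part of the Guan--Sroka error term. At $x_0$ we have $\frac{d}{dx_a}f=\lambda f_{p_a}+f_{x_a}+f_zu_a$, hence
\begin{equation*}
\frac{1}{\lambda}\Bigl(\lambda^2 f_{p_ap_a}-\frac{1}{f}\Bigl(\frac{df}{dx_a}\Bigr)^2\Bigr)=\lambda\Bigl(f_{p_ap_a}-\frac{f_{p_a}^2}{f}\Bigr)+O(1)+O(\lambda^{-1})\ge -C .
\end{equation*}
This cancellation, as in \eqref{jacobi_final}, is what makes the constant independent of $\lambda$.

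\textbf{A smaller point.} Your statement that ``the $i\le m$ components come from the Guan--Sroka gain'' is not accurate. That gain only controls $F^{aa}u_{aaa}^2/\lambda^2$. The components $F^{bb}u_{aab}^2/\lambda^2$ with $b\ne a$, $b\le m$, must come from the leftover of the $u_{aab}^2$ bookkeeping.

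Per ordered pair, the total coefficient of $u_{aab}^2$ is $-2F^{ab,ba}/\lambda-3F^{aa}/\lambda^2$. One of the three copies of $-F^{aa}/\lambda^2$ is the term $-F^{bb}u_{aab}^2/\lambda^2$ coming from $-\sum_iF^{ii}u_{aai}^2/\lambda^2$ with $i=b$. Your pairing description omits this term.

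The leftover is therefore $F^{aa}/\lambda^2$ for $k=n-1$, and $F^{aa}/\lambda^2-2F^2/\lambda^5$ for $k=n-2$. The latter is automatically a positive multiple of $F^{aa}/\lambda^2$ when $n\ge 4$. For $n=3$, $m=2$ it genuinely needs $\lambda_3\le C$ from Lemma \ref{lu_eigenvalue_bounds} together with $\lambda\ge K_0$.
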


\begin{remark}
    When $(n,k)\neq (3,1)$, the constant $K_0$ can be chosen as $1$.
\end{remark}

\begin{proof}
Throughout the proof, $M(n,k)$ denotes a positive constant depending on $n$ and $k$, and $C$ denotes a positive constant depending only on $n, k,  \|u\|_{C^{0,1}(B_2(0))}, \|1/f\|_{L^\infty}$, and $\|f\|_{C^{1,1}}$. These constants may change from line to line.

Note that $b_m$ is symmetric in $\lambda_1, \dots, \lambda_m$, and $\lambda_m > \lambda_{m+1}$. Hence, the function is smooth near $x_0$.

We assume $D^2u$ is diagonalized at the point $x_0$. We will perform all computations below at $x_0$. Now, assume $\lambda \ge K_0 \ge 1$, where $K_0$ is a constant depending on $n, k, \|u\|_{C^{0,1}(B_2(0))}, \|1/f\|_{L^\infty}$, and $\|f\|_{C^{1,1}}$, which will be chosen later. We employ the approximation argument used in Lemma 2.3 of \cite{WangYu2014SLE}. We first assume that the first $m$ eigenvalues are distinct. We will compute $\sum_i F^{ii} ((b_m)_i)^2$ and $\sum_i F^{ii} (b_m)_{ii}$ by differentiating the characteristic equation $\det(D^2u - \lambda_j I) = 0$ at $x_0$.
The derivatives of the eigenvalues are given by
\begin{align*}
\partial_e \lambda_j &= u_{jje}, \\
\partial_{ee} \lambda_j &= u_{jjee} + \sum_{k \neq j} \frac{2(u_{jke})^2}{\lambda_j - \lambda_k}.
\end{align*}
Now, computing the first derivative squared term contracted with $F^{ii}$, we have
\begin{align*}
\sum_i F^{ii}((b_m)_i)^2 = \sum_i F^{ii} \left( \frac{1}{m} \sum_{a=1}^m \frac{(\lambda_a)_i}{\lambda_a} \right)^2 =  \sum_i  F^{ii} \left( \frac{1}{m} \sum_{a=1}^m \frac{u_{aai}}{\lambda_a} \right)^2.
\end{align*}
Next, we compute the second derivative term traced with $F^{ii}$ to obtain
\begin{equation}
\label{m1}
\begin{split}
 \sum_i  F^{ii}(b_m)_{ii} &=  \sum_i  F^{ii} \left( \frac{1}{m} \sum_{a=1}^m \left( \frac{(\lambda_a)_{ii}}{\lambda_a} - \frac{((\lambda_a)_i)^2}{\lambda_a^2} \right) \right) \\
&= \frac{1}{m}  \sum_i  \sum_{a=1}^m F^{ii} \left( \frac{u_{aaii}}{\lambda_a} + \sum_{k \neq a} \frac{2u_{aki}^2}{\lambda_a(\lambda_a - \lambda_k)} - \frac{u_{aai}^2}{\lambda_a^2} \right).
\end{split}
\end{equation}
Now, we expand the second term to obtain
\begin{equation*}
    \sum_{a=1}^m  \sum_{k \neq a} \frac{2u_{aki}^2}{\lambda_a(\lambda_a - \lambda_k)}  = \sum_{a=1}^m  \sum_{\alpha > m} \frac{2u_{a\alpha i}^2}{\lambda_a(\lambda_a - \lambda_\alpha)} + \sum_{\substack{a,b=1 \\ a \neq b}}^m \frac{2u_{abi}^2}{\lambda_a(\lambda_a - \lambda_b)}.
\end{equation*}
Furthermore, 
\begin{align*}
    \sum_{\substack{a,b=1 \\ a \neq b}}^m \frac{2u_{abi}^2}{\lambda_a(\lambda_a - \lambda_b)} &= \sum_{1 \le a < b \le m}   \frac{2u_{abi}^2}{\lambda_a(\lambda_a - \lambda_b)}+\sum_{1 \le a < b \le m}   \frac{2 u_{abi}^2}{\lambda_b(\lambda_b - \lambda_a)} \\
    &= -\sum_{1 \le a < b \le m} \frac{2u_{abi}^2}{\lambda_a\lambda_b} = -\sum_{\substack{a,b=1 \\ a \neq b}}^m \frac{u_{abi}^2}{\lambda_a\lambda_b}.
\end{align*}
(\ref{m1}) becomes 

\begin{equation}
\begin{split}
 &\sum_i  mF^{ii}(b_m)_{ii}=\\
 &  \sum_i F^{ii} \left( \sum_{a=1}^m \left[ \frac{u_{aaii}}{\lambda_a} + \sum_{\alpha > m} \frac{2u_{a\alpha i}^2}{\lambda_a(\lambda_a - \lambda_\alpha)} - \frac{u_{aai}^2}{\lambda_a^2} \right] -\sum_{\substack{a,b=1 \\ a \neq b}}^m \frac{u_{abi}^2}{\lambda_a\lambda_b} \right).
\end{split}
\end{equation}
Now, as in the proof of \cite{WangYu2014SLE} Lemma 2.3, since $b_m$ is $C^2$ near $D^2u(x_0)$ as a function of the matrix, we may approximate $D^2u(x_0)$ by matrices with distinct eigenvalues and take the limit (i.e., all $\lambda_a,\lambda_b \rightarrow \lambda$ if $a,b \leq m$). We have
\begin{equation}
\label{eq3_4}
\begin{split}
 &\sum_i  mF^{ii}(b_m)_{ii} =\\
 &\sum_i F^{ii} \left( \sum_{a=1}^m \left[ \frac{u_{aaii}}{\lambda} + \sum_{\alpha > m} \frac{2u_{a\alpha i}^2}{\lambda(\lambda - \lambda_\alpha)} - \frac{u_{aai}^2}{\lambda^2} \right] -\sum_{\substack{a,b=1 \\ a \neq b}}^m \frac{u_{abi}^2}{\lambda^2} \right).
\end{split}
\end{equation}
Differentiating the equation $F(D^2u) = f(x,u,Du)$ twice yields
\begin{equation*} 
\sum_i F^{ii} u_{iiaa} + \sum_{p,q,r,s} F^{pq,rs} u_{pqa} u_{rsa} = \frac{d^2}{dx_a^2}f(x,u,Du) \quad \text{for } a = 1, \dots, m.
\end{equation*}
Observe that 
\begin{equation*}
    \sum_{p,q,r,s} F^{pq,rs} u_{pqa} u_{rsa} = \sum_{p,q} F^{pp,qq}u_{ppa}u_{qqa}+ \sum_{p \neq q} F^{pq,qp}u_{pqa}^2.
\end{equation*}
(\ref{eq3_4}) becomes
\begin{equation}
\label{e3_5}
\begin{split}
\sum_i mF^{ii}(b_m)_{ii} &= \sum_{a=1}^m \sum_{p,q} -F^{pp,qq}\frac{u_{ppa}u_{qqa}}{\lambda}- \sum_{a=1}^m \sum_{p \neq q} F^{pq,qp}\frac{u_{pqa}^2}{\lambda} \\ &+ \sum_i F^{ii} \left(  \sum_{a=1}^m  \sum_{\alpha > m} \frac{2u_{a\alpha i}^2}{\lambda(\lambda - \lambda_\alpha)} -\sum_{\substack{a,b=1 \\ a \neq b}}^m \frac{u_{abi}^2}{\lambda^2} - \sum_{a=1}^m\frac{u_{aai}^2}{\lambda^2} \right) \\
&+ \sum_{a=1}^m \frac{1}{\lambda}\frac{d^2}{dx_a^2}f(x,u,Du).
\end{split}
\end{equation}

Now, we estimate the terms in \eqref{e3_5}.
For the first term, we make use of the concavity inequality in Lemma \ref{concave_simplified} to obtain
\begin{equation}
\label{eq3_6}
\begin{split}
    -\sum_{a=1}^m \sum_{p,q} F^{pp,qq}\frac{u_{ppa}u_{qqa}}{\lambda} &\ge \sum_{a=1}^m \frac{-1}{F\lambda}\left(\sum_i F^{ii}u_{iia}\right)^2+\sum_{a=1}^m (M(n,k)+1)\frac{F^{aa}u_{aaa}^2}{\lambda^2} \\
    &=\sum_{a=1}^m \frac{-1}{F\lambda}\left(\frac{df}{dx_a}\right)^2+\sum_{a=1}^m (M(n,k)+1)\frac{F^{aa}u_{aaa}^2}{\lambda^2}.
\end{split}
\end{equation}
Let $I_{1a}$ and $I_{1b}$ denote the first and second terms on the right hand side respectively.
Note that $\lambda_a = \lambda_1$ for $1 \le a \le m$ and $F^{aa} = F^{11}$ so we can apply the lemmas to all $1 \le a \le m$. This can be seen by rewriting the original proof or considering permutations of indices.

For the second term, using the fact that $-F^{pq,qp}>0$ by Lemma \ref{lem:2.2}, we drop the terms where both indices $p$ and $q$ are greater than $m$ to obtain
\begin{equation*}
\begin{split}
    -\sum_{a=1}^m \sum_{p \neq q} F^{pq,qp}\frac{u_{pqa}^2}{\lambda} &\geq -2\sum_{\substack{a,b=1 \\ a \neq b}}^m F^{ab,ba}\frac{u_{aab}^2}{\lambda}  +2\sum_{a=1}^m \sum_{\alpha > m} \frac{F^{\alpha\alpha}-F^{aa}}{\lambda-\lambda_\alpha}\frac{u_{aa\alpha}^2}{\lambda}\\
    &+ 2\sum_{\substack{a,b=1 \\ a \neq b}}^m \sum_{\alpha > m} \frac{F^{\alpha\alpha}-F^{bb}}{\lambda-\lambda_\alpha}\frac{u_{ab\alpha}^2}{\lambda} -\sum_{\substack{a,b,c=1 \\ a \neq b,a\neq c , b \neq c}}^m F^{bc,cb}\frac{u_{abc}^2}{\lambda} .
\end{split}
\end{equation*}
Let $I_{2a}$, $I_{2b}$, $I_{2c}$, and $I_{2d}$ denote the first, second, third, and fourth terms on the right hand side respectively. Note that the coefficient of $I_{2d}$ is $1$ instead of $2$.

For the third term, we drop all terms for $i>m$ to obtain
\begin{equation}
\label{third_term}
     \sum_i F^{ii}  \sum_{a=1}^m  \sum_{\alpha > m} \frac{2u_{a\alpha i}^2}{\lambda(\lambda - \lambda_\alpha)} \ge  \sum_{a=1}^m F^{aa}   \sum_{\alpha > m} \frac{2u_{aa\alpha }^2}{\lambda(\lambda - \lambda_\alpha)} + \sum_{\substack{a,b=1 \\ a \neq b}}^m \sum_{\alpha > m} \frac{2F^{bb}u_{ab\alpha }^2}{\lambda(\lambda - \lambda_\alpha)}. 
\end{equation}
Let $I_{3a}$ and $I_{3b}$ denote the first and second terms on the right hand side respectively.

For the fourth term, we expand and group to obtain
\begin{equation*}
\begin{split}
    &- \sum_i F^{ii} \sum_{\substack{a,b=1 \\ a \neq b}}^m \frac{u_{abi}^2}{\lambda^2} \\
    &= -\sum_{\substack{a,b=1 \\ a \neq b}}^m  \frac{F^{aa}u_{aab}^2+F^{bb}u_{abb}^2}{\lambda^2} -\sum_{\substack{a,b=1 \\ a \neq b}}^m \sum_{\alpha > m} F^{\alpha\alpha} \frac{u_{ab\alpha}^2}{\lambda^2} -  \sum_{\substack{a,b,c=1 \\ a \neq b,a \neq c, b \neq c}}^m F^{aa}\frac{u_{abc}^2}{\lambda^2} \\
    &= -\sum_{\substack{a,b=1 \\ a \neq b}}^m F^{aa} \frac{2u_{aab}^2}{\lambda^2} -\sum_{\substack{a,b=1 \\ a \neq b}}^m \sum_{\alpha > m} F^{\alpha\alpha} \frac{u_{ab\alpha}^2}{\lambda^2} -  \sum_{\substack{a,b,c=1 \\ a \neq b,a \neq c, b \neq c}}^m F^{aa}\frac{u_{abc}^2}{\lambda^2} .
\end{split}
\end{equation*}
Let $I_{4a}$, $I_{4b},$ and $I_{4c}$ denote the first, second, and third terms on the last line respectively.

For the fifth term, we similarly expand to obtain
\begin{equation*}
    -\sum_{a=1}^m   \sum_i F^{ii}  \frac{u_{aai}^2}{\lambda^2} = -\sum_{a=1}^m F^{aa}  \frac{u_{aaa}^2}{\lambda^2} -\sum_{\substack{a,b=1 \\ a \neq b}}^m F^{bb}  \frac{u_{aab}^2}{\lambda^2} -\sum_{a=1}^m   \sum_{\alpha > m} F^{\alpha\alpha}  \frac{u_{aa\alpha}^2}{\lambda^2}.  
\end{equation*}
Let $I_{5a}$, $I_{5b}$, and $I_{5c}$ denote the first, second, and third terms on the right hand side respectively.

Now, we group the terms by the indices of the third derivatives. 

First, we group the terms involving $u_{aaa}^2$.
Combining the corresponding terms in $I_{1b}$ and $I_{5a}$, we have 
\begin{equation}
\label{u_aaa}
\begin{split}
    &\sum_{a=1}^m (M(n,k)+1)\frac{F^{aa}u_{aaa}^2}{\lambda^2}-  \sum_{a=1}^m \frac{F^{aa}u_{aaa}^2}{\lambda^2} \\
    &=   \sum_{a=1}^m M(n,k) \frac{F^{aa}u_{aaa}^2}{\lambda^2}.
\end{split}
\end{equation}

Now, we group the terms involving $u_{aab}^2$ or $u_{abb}^2$.
Combining $I_{2a}$, $I_{4a}$, and $I_{5b}$, we have 
\begin{equation}
\label{u_aab_prep}
\begin{split}
   -2\sum_{\substack{a,b=1 \\ a \neq b}}^m F^{ab,ba}\frac{u_{aab}^2}{\lambda} -3\sum_{\substack{a,b=1 \\ a \neq b}}^m F^{bb}  \frac{u_{aab}^2}{\lambda^2}.
\end{split}
\end{equation}

Now, we claim that under our assumption $\lambda \ge K_0$, where $K_0 \ge 1$ depends only on  $n, k, \|u\|_{C^{0,1}(B_2(0))}, \|1/f\|_{L^\infty}$, and $\|f\|_{C^{1,1}}$, the term (\ref{u_aab_prep}) has the following lower bound
\begin{equation}
\label{u_aab}
M(n,k)\sum_{\substack{a,b=1 \\ a \neq b}}^m F^{bb}  \frac{u_{aab}^2}{\lambda^2}.
\end{equation}

Case $k=n-1$.
By Lemma \ref{lem:F_abba_exact}(a), $ -F^{ab,ba} =  \frac{2F^{aa}}{\lambda}$. (\ref{u_aab_prep}) becomes 
\begin{equation*}
\sum_{\substack{a,b=1 \\ a \neq b}}^m F^{bb}  \frac{u_{aab}^2}{\lambda^2}.
\end{equation*}

Case $k=n-2$. By Lemma \ref{lem:F_abba_exact}(b), $ -F^{ab,ba} = \frac{2F^{aa}}{\lambda} - \frac{F^2}{\lambda^4}$. (\ref{u_aab_prep}) becomes
\begin{equation}
\label{u_aab_n-2_prep2}
    \begin{split}
        \sum_{\substack{a,b=1 \\ a \neq b}}^m \left(\frac{F^{aa}}{\lambda}  - 2\frac{F^2}{\lambda^4}\right)\frac{u_{aab}^2}{\lambda} =  \sum_{\substack{a,b=1 \\ a \neq b}}^m \left(\frac{F^2}{\lambda^3}\sum_{i \neq a} \frac{1}{\lambda_i} -2\frac{F^2}{\lambda^4}\right)\frac{u_{aab}^2}{\lambda}
    \end{split}
\end{equation}

Subcase $n = 3$ and $m=1$.
The term does not exist.

Subcase $n=3$ and $m=2$.
By Lemma \ref{lu_eigenvalue_bounds}, $\lambda_3\le \sqrt{C(3)f} \le C \sqrt{\sup f} =: c_0$. Suppose that $\lambda \geq 2c_0$, then 
\begin{equation*}
    \frac{F^2}{\lambda^3}\sum_{i \neq a} \frac{1}{\lambda_i} -2\frac{F^2}{\lambda^4} =\frac{F^2}{\lambda^3}\frac{1}{\lambda_3} -\frac{F^2}{\lambda^4}  \geq \frac{F^2}{\lambda^3}\frac{1}{2 \lambda_3} \geq  \frac{F^2}{\lambda^3}\sum_{i \neq a} \frac{1}{4\lambda_i} = \frac{F^{aa}}{4\lambda} \ge M(3,1)\frac{F^{aa}}{\lambda}.
\end{equation*}
We have the desired lower bound.

Subcase $n=3$ and $m=3$.
Indeed, we do not need the Jacobi inequality. From the equation, it is immediate that $\lambda = \sqrt{3f} \le  \sqrt{3\sup f} $.

Subcase $n\ge 4$.
\begin{equation*}
    \frac{F^2}{\lambda^3}\sum_{i \neq a} \frac{1}{\lambda_i} -2\frac{F^2}{\lambda^4} \ge \frac{F^2}{\lambda^3} \frac{1}{\lambda_n} \ge  \frac{F^2}{\lambda^3} \frac{1}{n-1} \sum_{i \neq a} \frac{1}{\lambda_i} = \frac{1}{(n-1)\lambda} F^{aa} \ge M(n,n-2)\frac{ F^{aa} }{\lambda}.
\end{equation*}
We have the desired lower bound.

Hence, \eqref{u_aab_prep} is bounded from below by \eqref{u_aab} if $\lambda \ge K_0$.

Next, we group the terms involving $u_{abc}^2$. By combining $I_{2d}$ and $I_{4c}$, we arrive at the expression
\begin{align}
\label{u_abc}
    -\sum_{\substack{a,b,c=1 \\ a \neq b,a\neq c , b \neq c}}^m F^{bc,cb}\frac{u_{abc}^2}{\lambda} -  \sum_{\substack{a,b,c=1 \\ a \neq b,a \neq c, b \neq c}}^m F^{aa}\frac{u_{abc}^2}{\lambda^2}.
\end{align}
Now, we claim that the expression in \eqref{u_abc} is non-negative.

Case $k= n-1$. By Lemma \ref{lem:F_abba_exact}(a), $ -F^{ab,ba} =  \frac{2F^{aa}}{\lambda}$. (\ref{u_abc}) becomes 
\begin{equation*}
\sum_{\substack{a,b,c=1 \\ a \neq b,a \neq c, b \neq c}}^m F^{aa}\frac{u_{abc}^2}{\lambda^2} \geq 0.
\end{equation*}

Case $k = n-2$. By Lemma \ref{lem:F_abba_exact}(b), $ -F^{ab,ba} = \frac{2F^{aa}}{\lambda} - \frac{F^2}{\lambda^4}$. 
(\ref{u_abc}) becomes 
\begin{align*}
  &\sum_{\substack{a,b,c=1 \\ a \neq b,a \neq c, b \neq c}}^m \left(\frac{F^{aa}}{\lambda^2} -\frac{F^2}{\lambda^5}\right)  u_{abc}^2 \\
  &= \sum_{\substack{a,b,c=1 \\ a \neq b,a \neq c, b \neq c}}^m \left(\frac{F^2}{\lambda^4}\sum_{i \neq a} \frac{1}{\lambda_i} -\frac{F^2}{\lambda^5}\right)  u_{abc}^2 \ge 0,
\end{align*}
where we use Lemma \ref{n-2properties} in the first equality.

Next, we group the terms involving $u_{ab\alpha}^2$.
Combining $I_{2c}$, $I_{3b}$, and $I_{4b}$, we have
\begin{equation}
\label{u_abalpha}
\begin{split}
    &2 \sum_{\substack{a,b=1 \\ a \neq b}}^m \sum_{\alpha > m} \frac{F^{\alpha\alpha}-F^{bb}}{\lambda-\lambda_\alpha}\frac{u_{ab\alpha}^2}{\lambda}+\sum_{\substack{a,b=1 \\ a \neq b}}^m \sum_{\alpha > m} \frac{2F^{bb}u_{ab\alpha }^2}{\lambda(\lambda - \lambda_\alpha)}   -\sum_{\substack{a,b=1 \\ a \neq b}}^m \sum_{\alpha > m} F^{\alpha\alpha} \frac{u_{ab\alpha}^2}{\lambda^2}\\
    &= \sum_{\substack{a,b=1 \\ a \neq b}}^m \sum_{\alpha > m} \frac{2F^{\alpha\alpha}}{\lambda-\lambda_\alpha}\frac{u_{ab\alpha}^2}{\lambda}   -\sum_{\substack{a,b=1 \\ a \neq b}}^m \sum_{\alpha > m} F^{\alpha\alpha} \frac{u_{ab\alpha}^2}{\lambda^2}\\
    &\ge \sum_{\substack{a,b=1 \\ a \neq b}}^m \sum_{\alpha > m} F^{\alpha\alpha} \frac{u_{ab\alpha}^2}{\lambda^2} \geq 0.
\end{split}
\end{equation}

Finally, we group the terms involving $u_{aa\alpha}^2$.
Combining $I_{2b}$, $I_{3a}$, and $I_{5c}$, we have
\begin{equation}
\label{u_aaalpha}
    \begin{split}
    &2\sum_{a=1}^m \sum_{\alpha > m} \frac{F^{\alpha\alpha}-F^{aa}}{\lambda-\lambda_\alpha}\frac{u_{aa\alpha}^2}{\lambda}+ \sum_{a=1}^m F^{aa}   \sum_{\alpha > m} \frac{2u_{aa\alpha }^2}{\lambda(\lambda - \lambda_\alpha)}-\sum_{a=1}^m   \sum_{\alpha > m} F^{\alpha\alpha}  \frac{u_{aa\alpha}^2}{\lambda^2}  \\
    &= 2\sum_{a=1}^m \sum_{\alpha > m} \frac{F^{\alpha\alpha}}{\lambda-\lambda_\alpha}\frac{u_{aa\alpha}^2}{\lambda}-\sum_{a=1}^m   \sum_{\alpha > m} F^{\alpha\alpha}  \frac{u_{aa\alpha}^2}{\lambda^2}  \\
    &\ge\sum_{a=1}^m   \sum_{\alpha > m} F^{\alpha\alpha}  \frac{u_{aa\alpha}^2}{\lambda^2}.  
    \end{split}
\end{equation}

Since we showed that the terms in (\ref{u_abc}) and (\ref{u_abalpha}) are non-negative, they can be dropped. 
Substituting the lower bounds (\ref{u_aaa}), (\ref{u_aab}) and (\ref{u_aaalpha}) back into (\ref{e3_5}), we obtain
\begin{equation}
\label{last3step}
\begin{split}
\sum_i mF^{ii}(b_m)_{ii} &\ge M(n,k) \sum_{a=1}^m \sum_i F^{ii} \frac{u_{aai}^2}{\lambda^2}  + \sum_{a=1}^m \frac{1}{\lambda}\frac{d^2}{dx_a^2}f(x,u,Du) \\
&-\sum_{a=1}^m \frac{1}{F\lambda}\left(\frac{df}{dx_a}\right)^2. 
\end{split} 
\end{equation}

By the Cauchy-Schwarz inequality, 
\begin{equation}
\label{cauchyschwarz}
    \begin{split}
    \sum_i F^{ii} \sum_{a=1}^m \frac{u_{aai}^2}{\lambda^2} &= \sum_{a=1}^m \sum_i F^{ii} (\ln \lambda_a)_i^2\\
    &\ge \frac{1}{m} \sum_i F^{ii} \left(\sum_{a=1}^m \ln \lambda_a\right)_i^2 = m \sum_i F^{ii} (b_m)_i^2.        
    \end{split}
\end{equation}

Expanding the total derivatives $\frac{df}{dx_a}$ and $\frac{d^2f}{dx_a^2}$ in (\ref{last3step}), and applying the Cauchy-Schwarz bound (\ref{cauchyschwarz}), we obtain 
\begin{equation}
\label{jacobi_final}
\begin{split}
\sum_i mF^{ii}(b_m)_{ii} &\ge M(n,k) m\sum_i F^{ii} (b_m)_i^2  + \sum_{a=1}^m \frac{\partial^2 f}{\partial p_a^2} u_{aa} + \sum_{a=1}^m \sum_i \frac{\partial f}{\partial p_i} (\ln \lambda_a)_i \\
&-\sum_{a=1}^m \frac{1}{F}\left(\frac{\partial f}{\partial p_a}\right)^2 u_{aa} - C - \frac{C}{\lambda}.
\end{split} 
\end{equation}
Using the assumptions that $\lambda \ge K_0 \ge 1$ and that $\log f $ is convex in $Du$, and dividing both sides by $m$, we have
\begin{equation*}
\begin{split}
\sum_i F^{ii}(b_m)_{ii} &\ge M(n,k) \sum_i F^{ii} (b_m)_i^2   +  \sum_i \frac{\partial f}{\partial p_i} (b_m)_i -C. \\
\end{split} 
\end{equation*}
\end{proof}

The following proposition is a modification of \cite{LuTsai_general_quotient} Lemma 2.5.
\begin{proposition}
\label{jacobi_general_l_k}
Let $n \ge 2$, $1 \le k < l \le n$ with $k \in \{l-1, l-2\}$, and $f > 0$ be a $C^{1,1}$ function satisfying the structural inequality $\sum_{i,j} f_{p_i p_j} \xi_i \xi_j \ge \frac{K_1}{f} \left(\sum_i f_{p_i} \xi_i\right)^2$ for all $\xi \in \mathbb{R}^n$. Suppose Assumption \ref{assumption1.1} holds, $u \in C^4(B_{2}(0))$ is a  strictly convex solution to \eqref{main_equation_2}, and at a fixed point $x_0 \in B_2(0)$, $D^2u$ is diagonalized with its eigenvalues ordered as 
\begin{equation*}
    \lambda = \lambda_1 = \dots = \lambda_m > \lambda_{m+1} \ge \dots \ge \lambda_n \ge 0
\end{equation*}
Then, the function $b_m := \frac{1}{m} \sum_{a=1}^m \ln \lambda_a$ is locally smooth near $x_0$. Furthermore, there exists a constant $K_0 \ge 1$, depending only on $n, l, k, \|u\|_{C^{0,1}(B_2(0))}, \\ \|1/f\|_{L^\infty}$, and $\|f\|_{C^{1,1}}$, such that if $\lambda \ge K_0$, $b_m$ satisfies the Jacobi inequality 
\begin{equation*}
\sum_i F^{ii}(b_m)_{ii} \ge \tilde{c}(n,l)\sum_i F^{ii}((b_m)_i)^2 + \sum_i f_{p_i} (b_m)_i  - C
\end{equation*}
at $x_0$ in the classical sense. Here, $\tilde{c}(n,l):= \min(\frac{1}{2},c(n,l))$, where $c(n,l)$ is the constant from Assumption \ref{assumption1.1}, and $C$ is a constant depending only on $n, l, \|u\|_{C^{0,1}(B_2(0))}, \|1/f\|_{L^\infty}$, and $\|f\|_{C^{1,1}}$.

Equivalently, under the same assumptions, the function  $a_m:= \exp(\tilde{c}(n,l) b_m)$ satisfies the Jacobi inequality
\begin{equation}
\label{a_jacobi_2}
\sum_i F^{ii} (a_m)_{ii} \ge 2 \sum_i \frac{F^{ii}(a_m)_i^2}{a_m} - C a_m + \sum_i f_{p_i} (a_m)_i
\end{equation}
at $x_0$ in the classical sense.
\end{proposition}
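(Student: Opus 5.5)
We follow the scheme of the proof of Proposition \ref{jacobi} line by line, replacing the top-quotient identities with their general analogues. Smoothness of $b_m$ near $x_0$ follows as before from symmetry in $\lambda_1,\dots,\lambda_m$ and the gap $\lambda_m>\lambda_{m+1}$. Using the Wang--Yuan approximation argument, we arrive at the analogue of \eqref{e3_5}. We then insert the twice-differentiated equation $\sum_i F^{ii}u_{iiaa}+\sum F^{pq,rs}u_{pqa}u_{rsa}=\frac{d^2}{dx_a^2}f$ and split the third-order terms into the groups $u_{aaa}$, $u_{aab}$, $u_{abc}$, $u_{ab\alpha}$, $u_{aa\alpha}$.

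For the first term we use Assumption \ref{assumption1.1} instead of Lemma \ref{concave_simplified}, with $\xi_i=u_{iia}$ and $\lambda_1\ge K_0$. This yields $(1+c(n,l))F^{aa}u_{aaa}^2/\lambda^2$, together with the positive terms $2F^{ii}u_{iia}^2/(\lambda(\lambda-\lambda_i))$ for $\lambda_i\neq\lambda$, minus $\frac{K_1}{F\lambda}(\frac{df}{dx_a})^2$. The $u_{aa\alpha}$ contributions produced this way, combined with $-F^{\alpha\alpha}u_{aa\alpha}^2/\lambda^2$, are handled as in \eqref{u_aaalpha}. The $u_{ab\alpha}$ terms remain nonnegative exactly as in \eqref{u_abalpha}, since that computation only used $-F^{pq,qp}=(F^{pp}-F^{qq})/(\lambda_q-\lambda_p)$ from Lemma \ref{lem:2.2}. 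For the $u_{aab}$ and $u_{abc}$ groups we replace Lemma \ref{lem:F_abba_exact} with the identity \eqref{golden_eq}, $\lambda(-F^{ab,ba})=F^{aa}-G^{ab}$. Then \eqref{u_aab_prep} becomes $\sum (F^{aa}-2G^{ab})u_{aab}^2/\lambda^2$, and \eqref{u_abc} becomes $\sum (F^{aa}-G^{bc})u_{abc}^2/\lambda^2$. The whole task in these two groups is to show
\[
G^{ab}\le \Bigl(\tfrac12-\tfrac{c}{2}\Bigr)F^{aa}\quad\text{whenever } \lambda\ge K_0 .
\]

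This comparison is the main obstacle. We plan to expand both $F^{aa}$ and $G^{ab}$ in terms of $T_j:=\sigma_j(\lambda\,|\,\lambda_1,\dots,\lambda_m)$ and powers of $\lambda$. Since $F^{aa}=\lambda(-F^{ab,ba})+G^{ab}$ and $-F^{ab,ba}>0$, it suffices to show $G^{ab}/(\lambda(-F^{ab,ba}))\to0$, or at least becomes small, as $\lambda\to\infty$. The obvious attempt is a limit argument, but it fails if taken naively: the eigenvalues $\lambda_{m+1},\dots,\lambda_{l-1}$ may also be unbounded, whereas Lemmas \ref{lem:2.3} and \ref{lem:2.4} bound only $\lambda_l,\dots,\lambda_n$, and $\lambda_{l-1}$ in the case $k=l-2$, from above. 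We therefore intend to factor $\prod_{\alpha=m+1}^{l-1}\lambda_\alpha$ out of every elementary symmetric function appearing in $G^{ab}$ and $-F^{ab,ba}$. After this normalization, each leading coefficient depends only on the bounded eigenvalues and on ratios $\lambda_\alpha/\lambda\le1$. The ratio of $G^{ab}$ to $\lambda(-F^{ab,ba})$ should then be $O(1/\lambda)$ uniformly, with constants fixed by $\sup f$ and $\inf f$. We expect the cases $m\ge l-1$ to need separate handling, because the top-index terms change there; these cases are presumably where $\lambda$ is directly bounded by $f$, as in the $n=3,m=3$ subcase of Proposition \ref{jacobi}. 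The two subcases $k=l-1$ and $k=l-2$ will be checked separately using Lemmas \ref{lem:2.3} and \ref{lem:2.4}.

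With the third-order terms controlled, we obtain the analogue of \eqref{last3step} with coefficient $\tilde c(n,l)$; taking $\tilde c\le\frac12$ absorbs the loss coming from the $u_{aab}$ group. Expanding $\frac{d^2f}{dx_a^2}$ and $\frac{K_1}{F}(\frac{df}{dx_a})^2$ produces the terms $f_{p_ap_a}u_{aa}^2/\lambda=f_{p_ap_a}\lambda$ and $-\frac{K_1}{f}f_{p_a}^2\lambda$, and the structural inequality with $\xi=e_a$ makes their sum nonnegative. The first-order term $\sum_i f_{p_i}u_{aai}/\lambda$ becomes $\sum_i f_{p_i}(b_m)_i$ after summing over $a$ and dividing by $m$. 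The remaining contributions are $O(1)+O(1/\lambda)$ and are collected into $-C$. Cauchy--Schwarz \eqref{cauchyschwarz} then gives the inequality for $b_m$. The version for $a_m=e^{\tilde c b_m}$ is a direct chain-rule computation, $(a_m)_{ii}=\tilde c a_m((b_m)_{ii}+\tilde c(b_m)_i^2)$; to get the constant $2$ one may first need to shrink $\tilde c$ relative to the coefficient in the $b_m$ inequality.
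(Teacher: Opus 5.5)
Your overall architecture (Wang--Yuan approximation, regrouping by index type, Assumption \ref{assumption1.1} in place of Lemma \ref{concave_simplified}, the identity \eqref{golden_eq}, and expansion in $T_j$ with $\lambda_{m+1},\dots,\lambda_{l-1}$ factored out) matches the paper's. However, the central step is set up incorrectly, and the strategy built on it targets a false statement.

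\textbf{The $u_{aab}$ and $u_{abc}$ coefficients.} Substitute $\lambda(-F^{ab,ba})=F^{aa}-G^{ab}$ into \eqref{u_aab_prep}. The coefficient of $u_{aab}^2/\lambda^2$ is then $2(F^{aa}-G^{ab})-3F^{bb}=-F^{bb}-2G^{ab}$, not $F^{aa}-2G^{ab}$. Likewise, in \eqref{u_abc} the coefficient of $u_{abc}^2/\lambda^2$ is $(F^{bb}-G^{bc})-F^{aa}=-G^{bc}$, not $F^{aa}-G^{bc}$.

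So what must be shown is $-G^{ab}\ge \frac{1+c'}{2}F^{bb}$ for some $c'>0$; the paper obtains $-G^{ab}\ge\frac34F^{bb}$. In other words, $G^{ab}$ must be negative and comparable to $F^{aa}$. Your target $G^{ab}\le(\frac12-\frac c2)F^{aa}$ does not imply this.

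Your proposed route, showing $G^{ab}/(\lambda(-F^{ab,ba}))\to 0$, aims at something false. For $l=n$, $k=n-1$, Lemma \ref{lem:F_abba_exact}(a) gives $G^{ab}=-F^{aa}$ exactly, so the ratio is $-\frac12$. In general, with $T_j=\sigma_j(\lambda|\lambda_a,\lambda_b)$, the quantities $F^{aa}\sigma_k^2$ and $-G^{ab}\sigma_k^2$ share the leading term $\lambda^2(T_{l-2}T_{k-1}-T_{k-2}T_{l-1})$, so the ratio tends to $-\frac12$.

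Positivity comes precisely from this coincidence of leading terms. One has $(-\frac32F^{bb}-2G^{ab})\sigma_k^2=\frac12\lambda^2(T_{l-2}T_{k-1}-T_{k-2}T_{l-1})-\frac32\lambda(T_kT_{l-2}-T_lT_{k-2})-\frac72(T_{l-1}T_k-T_lT_{k-1})$. It is this expression to which Newton--Maclaurin and the factoring of $\lambda_{m+1}^2\cdots\lambda_{l-1}^2$ (times $\lambda_l$ when $k=l-2$) must be applied. That reduces the question to $\lambda^{2m-2}$ against $C\lambda^{2m-3}+C\lambda^{2m-4}$. The resulting bound $-G^{ab}\ge\frac34F^{bb}\ge0$ then also settles the $u_{abc}$ group.

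\textbf{A second sign error.} Rearrange Assumption \ref{assumption1.1} with $\xi_i=u_{iia}$. The extra terms enter with a minus sign, namely $-\sum_{a\le m}\sum_{\alpha>m}\frac{2F^{\alpha\alpha}u_{a\alpha\alpha}^2}{\lambda(\lambda-\lambda_\alpha)}$. They involve $u_{a\alpha\alpha}$, not $u_{aa\alpha}$, so they cannot be fed into \eqref{u_aaalpha}. They must instead be cancelled by the $i=\alpha$ terms on the left side of \eqref{third_term}. Proposition \ref{jacobi} discarded those terms, and they equal exactly $+\sum_{a\le m}\sum_{\alpha>m}\frac{2F^{\alpha\alpha}u_{a\alpha\alpha}^2}{\lambda(\lambda-\lambda_\alpha)}$.

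\textbf{Multiplicity cases.} Only $m\ge l$ yields a bounded $\lambda$, via $\lambda_l\le C$. The case $m=l-1$ is not trivial; it is covered by the main computation with an empty product. Also, Lemma \ref{lem:2.4} bounds $\lambda_{l-1}$ from below, not from above.
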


\begin{proof}
Throughout the proof, $C$ and $K_0$ denote positive constants that may change from line to line, depending only on $n, l, \|u\|_{C^{0,1}(B_2(0))}, \|1/f\|_{L^\infty}$, and $\|f\|_{C^{1,1}}$.  Now, we assume $\lambda \ge K_0 \ge 1$, where $K_0$ is a constant depending on $n, l,  \|u\|_{C^{0,1}(B_2(0))},\|1/f\|_{L^\infty}$, and $\|f\|_{C^{1,1}}$ to be chosen later.
    The proof is highly similar to the proof of Proposition \ref{jacobi}, except for the following three modifications.
    
    First, in (\ref{eq3_6}), instead of using the concavity inequalities Lemma \ref{concave_simplified}, we use Assumption \ref{assumption1.1} to obtain
\begin{equation*}
\begin{split}
    -\sum_{a=1}^m \sum_{p,q} F^{pp,qq}\frac{u_{ppa}u_{qqa}}{\lambda} 
    &\ge -\sum_{a=1}^m \frac{K_1}{F\lambda}\left(\frac{df}{dx_a}\right)^2+\sum_{a=1}^m (c(n,l)+1)\frac{F^{aa}u_{aaa}^2}{\lambda^2} \\
    &-\sum_{a=1}^m\sum_{\alpha>m}\frac{2F^{\alpha\alpha}u_{a\alpha \alpha}^2}{\lambda(\lambda-\lambda_\alpha)}.
\end{split}
\end{equation*}
Note that $\lambda_a = \lambda_1$ for $1 \le a \le m$ and $F^{aa} = F^{11}$, so we can apply Assumption \ref{assumption1.1} to all $1 \le a \le m$. This can be seen by considering permutations of indices.
Note that there will be an extra term $-\sum_{a=1}^m\sum_{\alpha>m}\frac{2F^{\alpha\alpha}u_{a\alpha\alpha}^2}{\lambda(\lambda-\lambda_\alpha)}$, which can be absorbed by the $i>m$ term in (\ref{third_term}) that we dropped in the proof of Proposition \ref{jacobi}.
    
Second, as in Proposition \ref{jacobi}, we need to show that the terms involving $u_{aab}^2$ or $u_{abb}^2$ have the following lower bound
\begin{equation}
\label{u_aab_prep_l}
\begin{split}
   -2\sum_{\substack{a,b=1 \\ a \neq b}}^m F^{ab,ba}\frac{u_{aab}^2}{\lambda} -3\sum_{\substack{a,b=1 \\ a \neq b}}^m F^{bb}  \frac{u_{aab}^2}{\lambda^2} \geq \frac{1}{2}\sum_{\substack{a,b=1 \\ a \neq b}}^m F^{bb}  \frac{u_{aab}^2}{\lambda^2},
\end{split}
\end{equation}
where $c(n,l)$ is the constant from Assumption \ref{assumption1.1}. Note that the constant was $M(n,k)$ instead of $\frac{1}{2}$ in the proof of Proposition \ref{jacobi}.  
We first notice that by Lemma \ref{lem:2.3} and Lemma \ref{lem:2.4}, $\lambda_l \leq C$. Hence, we may assume $m\le l-1$ else $\lambda$ is trivially bounded. We may also assume $m > 1$, else the sums on both sides of (\ref{u_aab_prep_l}) are equal to $0$. 
Now, we fix $a,b$. Subtracting the right hand side of (\ref{u_aab_prep_l}) from the left, it suffices to show that the resulting coefficient of $\frac{u_{aab}^2}{\lambda^2}$ is non-negative. This coefficient is given by
\begin{equation}
\label{u_aab_l-1_1}
    -2\lambda F^{ab,ba} - \frac{7}{2}F^{bb} = 2(F^{bb}-G^{ab})- \frac{7}{2}F^{bb}= -\frac{3}{2}F^{bb}-2G^{ab},
\end{equation}
where we use Lemma \ref{lem:F_abba_general_l_k} in the first equality and $G^{ab}$ is defined in (\ref{G_def}).

 For simplicity, we use the following notation in the proof.
\begin{equation}
\label{T_j}
    T_j := \sigma_j(\lambda_1,\lambda_2,\cdots, \widehat{\lambda_a}, \cdots  \widehat{\lambda_b}, \cdots \lambda_n) 
\end{equation}
As mentioned before, $C \ge \lambda_l \ge \lambda_{l+1} \ge  \cdots \ge \lambda_n$. Note that $\lambda_{m+1},\cdots,\lambda_{l-1}$ may not be bounded. We will have to keep track of them.

Applying the equality $\sigma_{j}(\lambda) = \lambda_i \sigma_{j-1}(\lambda| \lambda_i) + \sigma_{j}(\lambda|\lambda_i)$ to (\ref{G_def}) and (\ref{Faa_l}), we have
\begin{align}
    -G^{ab} \sigma_{k}^2 &= \lambda^2(T_{l-2}T_{k-1} - T_{k-2} T_{l-1}) + T_{l}T_{k-1}-T_{l-1} T_{k}, \\
    F^{aa} \sigma_{k}^2  &= \lambda^2(T_{l-2}T_{k-1} - T_{k-2} T_{l-1}) + \lambda(T_{k}T_{l-2}-T_l T_{k-2}) + T_{l-1}T_k-T_l T_{k-1}.
\end{align}
(\ref{u_aab_l-1_1}) becomes
\begin{equation}
\label{u_aab_l-1_2}
    \begin{split}
        (-\frac{3}{2}F^{bb}-2G^{ab})\sigma_{k}^2 &= \frac{1}{2}\lambda^2(T_{l-2}T_{k-1} - T_{k-2} T_{l-1}) - \frac{3}{2}\lambda(T_{k}T_{l-2}-T_l T_{k-2})\\
        &- \frac{7}{2}(T_{l-1}T_k-T_l T_{k-1}) \\
        &\ge C\lambda^2 T_{l-2}T_{k-1} - \frac{3}{2}\lambda T_{k}T_{l-2} - \frac{7}{2}T_{l-1}T_k,
    \end{split}
\end{equation}
where we use Newton-Maclaurin inequalities on the first term and we dropped the positive terms in the second and third terms in the third inequality. 

Now, we split into the case $k=l-1$ and the case $k= l-2$.

Case $k = l-1$. We drop all the terms except the largest term.
\begin{equation*}
\begin{split}
    T_{l-2}T_{k-1} &\ge \lambda^{2m-4}\lambda_{m+1}^2 \ldots \lambda_{l-1} ^2 \lambda_l^2  \\
    &\ge C \lambda^{2m-4}\lambda_{m+1}^2  \ldots \lambda_{l-1} ^2,
\end{split}
\end{equation*}
where we use $\lambda_l \ge C$.
For the negative terms, they are bounded below by a negative multiple of the largest term.
\begin{equation*}
     -T_{k}T_{l-2} \ge -C \lambda^{2m-4} \lambda_{m+1}^2 \ldots \lambda_{l}^2 \lambda_{l+1} \ge  -C \lambda^{2m-4} \lambda_{m+1}^2 \ldots \lambda_{l-1}^2,
\end{equation*}
where we use $\lambda_{l+1}\le \lambda_{l} \le C$.
Similarly,
\begin{equation*}
     -T_{l-1}T_{k} \ge  -C \lambda^{2m-4} \lambda_{m+1}^2 \ldots \lambda_{l-1}^2.
\end{equation*}
(\ref{u_aab_l-1_2}) becomes
\begin{equation}
\label{u_aab_l-1_3n-1}
     (-\frac{3}{2}F^{bb}-2G^{ab})\sigma_{l-1}^2 \ge C \lambda_{m+1}^2\ldots \lambda_{l-1}^2 (\lambda^{2m-2}-C\lambda^{2m-3}-C\lambda^{2m-4}) >0,
\end{equation}
if $\lambda>K_0$.

Case $k = l-2$. We drop all the terms except the largest term.
\begin{equation*}
\begin{split}
    T_{l-2}T_{k-1} &\ge \lambda^{2m-4}\lambda_{m+1}^2  \ldots  \lambda_{l-1} ^2  \lambda_{l}.
\end{split}
\end{equation*}
For the negative terms, they are bounded below by a negative multiple of the largest term
\begin{equation*}
     -T_{k}T_{l-2} \ge -C \lambda^{2m-4} \lambda_{m+1}^2 \ldots \lambda_{l}^2 \ge  -C \lambda^{2m-4} \lambda_{m+1}^2 \ldots \lambda_{l-1}^2 \lambda_l,
\end{equation*}
where we use $\lambda_l \le C$.
Similarly,
\begin{equation*}
     -T_{l-1}T_{k} \ge  -C \lambda^{2m-4} \lambda_{m+1}^2 \ldots \lambda_{l-1}^2 \lambda_{l}.
\end{equation*}
(\ref{u_aab_l-1_2}) becomes
\begin{equation}
\label{u_aab_l-1_3n-2}
     (-\frac{3}{2}F^{bb}-2G^{ab})\sigma_{k}^2 \ge C \lambda_{m+1}^2\ldots \lambda_{l-1}^2 \lambda_l (\lambda^{2m-2}-C\lambda^{2m-3}-C\lambda^{2m-4}) >0,
\end{equation}
if $\lambda>K_0$.

Third, as before, the terms involving $u_{abc}^2$ are given by (\ref{u_abc}). We need to show 
\begin{equation*}
       -F^{bc,cb}\lambda -F^{aa} \ge 0.
\end{equation*}

Using Lemma \ref{lem:F_abba_general_l_k} and the definition of $G^{ab}$ in (\ref{G_def}), we have 
\begin{equation*}
    -F^{bc,cb}\lambda -F^{aa} = F^{bb}-G^{bc}-F^{aa} = -G^{bc} = -G^{ab} \ge 0,
\end{equation*}
where in the last inequality we use the fact that $-G^{ab}\ge \frac{3}{4}F^{bb}\ge 0$ established in (\ref{u_aab_l-1_3n-1}) and (\ref{u_aab_l-1_3n-2}).

Finally, instead of (\ref{jacobi_final}), we have the following inequality
\begin{equation*}
\begin{split}
\sum_i mF^{ii}(b_m)_{ii} &\ge \min(\frac{1}{2},c(n,l)) m\sum_i F^{ii} (b_m)_i^2  + \sum_{a=1}^m \frac{\partial^2 f}{\partial p_a^2} u_{aa} \\ 
&+ \sum_{a=1}^m \sum_i \frac{\partial f}{\partial p_i} (\ln \lambda_a)_i 
-\sum_{a=1}^m \frac{K_1}{F}\left(\frac{\partial f}{\partial p_a}\right)^2 u_{aa} - C - \frac{C}{\lambda},
\end{split} 
\end{equation*}
where $c(n,l)$ is the constant from Assumption \ref{assumption1.1}. 
Using the assumed structural inequality 
$\sum_{i,j} f_{p_i p_j} \xi_i \xi_j \ge \frac{K_1}{f} \left(\sum_i f_{p_i} \xi_i\right)^2$ for any $\xi \in \mathbb{R}^n$, the assumption $\lambda \ge 1 $, and dividing both sides by $m$, we have the desired inequality.

\end{proof}

\section{Doubling Inequality}
\label{s4}
In this section, we establish the doubling inequalities. We first derive the maximum principle inequality \eqref{max_prin_inequality} using the test function constructed by Shankar \cite{Shankar2024SLE}. As noted by Shankar, this construction is motivated by Korevaar exponential cutoff \cite{Korevaar1987} and Guan-Qiu \cite{GuanQiu2019} type radial derivative $(x-y)\cdot Du - u$. To deal with the third order term $\sum_i f_{p_i}(a_m)_i$, we use the fact that the first derivative of the test function is zero at the maximum point. A crucial observation is that this term later generates the term $h \sum_i (x_i-y_i) u_{ii}f_{p_i}$, which cancels with an identical term arising from the expansion of $ \sum_i F^{ii}\varphi_{ii}$ upon applying the identity $ \sum_i F^{ii}u_{iij} = \frac{d}{dx_j}f$. 

To obtain the final doubling inequality from \eqref{max_prin_inequality}, we proceed by cases.
The subcases $k=n-1$ and $k=l-1$ are relatively straightforward. For $k=n-1$, we simply utilize the bound $\sum_i F^{ii} \le C$, while for $k=l-1$, we apply the estimates established by Lu and Tsai \cite{LuTsai_general_quotient} to show $\sum_i F^{ii} \le C$. Then the estimates follow from a standard computation.

The subcases $k=n-2$ and $k=l-2$ are more involved because $\sum_i F^{ii}$ may not be bounded.  Instead, our strategy is to show that, for each $i$, the coefficient of $(x_i-y_i)^2$ has a lower bound $ C \sum_k F^{kk}$. To prove this for $k=n-2$, we analyze two further subcases by comparing $\frac{1}{\lambda_i}$ with $\sum_j \frac{1}{\lambda_j}$. For $k=l-2$, we make use of the fact, established by Lu and Tsai \cite{LuTsai_general_quotient}, that $F^{ii}(1+u_{ii})^2$ is bounded above and below by a constant multiple of $\lambda_{l-1}$. Finally, we use the fact that the trace $\sum_i F^{ii}$ is bounded from below and divide both sides of the maximum principle inequality by the trace to conclude the proof.

\begin{proposition}[Doubling inequality]
\label{doubling}
Let $n \ge 2$, $k \in \{n-1, n-2\}$ with $k \ge 1$, and let $f(x,u,Du)$ be a positive $C^{1,1}$ function such that $\log f$ is convex in the $Du$ variable. Suppose $u \in C^4(B_{2}(0))$ is a convex solution to \eqref{main_equation_1} on $B_2(0)$. Then, for any point $y \in B_1(0)$, there exists a sufficiently small constant $R(n,\|u\|_{C^{0,1}(B_1(0))}) > 0$ such that for any $R > r > 0$, we have
\begin{equation}
\begin{split}
    \sup_{B_R(y)} \lambda_1(D^2u)  &\leq C \left(r,n, k, \|u\|_{C^{0,1}(B_2(0))}, \left\|\frac{1}{f}\right\|_{L^\infty},\|f\|_{C^{1,1}},\sup_{B_r(y)} \lambda_1(D^2u)\right).  
\end{split}
\end{equation}
\end{proposition}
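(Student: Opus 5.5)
We follow Shankar's construction. Fix $y\in B_1(0)$ and apply the maximum principle to
\begin{equation*}
\varphi(x)= \eta(x)\, a(x)\, e^{\beta h(x)},
\end{equation*}
on $B_R(y)$. Here $h(x)=(x-y)\cdot Du(x)-u(x)+u(y)$ is the Guan--Qiu radial quantity, normalized so that $h(y)=0$. The weight $\eta$ is a Korevaar-type cutoff such as $\eta=(R^2-|x-y|^2)^{+}$, or $e^{-\gamma|x-y|^2}$ combined with $h$. The factor $a$ is the function $a_m$ from Proposition \ref{jacobi}, with $m$ the multiplicity of $\lambda_1$ at the maximum point $x_0$. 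On $B_R(y)$ the quantity $h$ is controlled by $C(\|u\|_{C^{0,1}})R$, so $R$ small makes $h$ small.

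We may assume $\lambda_1(x_0)\ge K_0$; otherwise the bound is trivial. Since $b_m\le \ln\lambda_1$, with equality at $x_0$, the function $\eta a_m e^{\beta h}$ touches the true $\eta \lambda_1^{M}e^{\beta h}$ from below at $x_0$. This gives a smooth test function at the maximum, so the classical Jacobi inequality \eqref{a_jacobi_1} can be used directly.

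At $x_0$ we have $D\log\varphi=0$ and $F^{ii}(\log\varphi)_{ii}\le 0$. Next we compute $F^{ii}h_{ii}$. Differentiating the equation gives $F^{ii}u_{iij}=\frac{d}{dx_j}f=f_{x_j}+f_zu_j+\sum_i f_{p_i}u_{ij}$. Hence
\begin{equation*}
F^{ii}h_{ii}=\sum_i F^{ii}u_{ii}+\sum_j (x_j-y_j)\tfrac{d f}{dx_j},
\end{equation*}
and $\sum_i F^{ii}u_{ii}$ is bounded by homogeneity ($lF-kF$ type). The gradient term $\sum_i f_{p_i}(a_m)_i/a_m$ from \eqref{a_jacobi_1} is rewritten via the first-order condition as $-\sum_i f_{p_i}(\eta_i/\eta+\beta h_i)$. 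Since $h_i=\sum_j(x_j-y_j)u_{ij}$, this produces $-\beta\sum_i(x_i-y_i)u_{ii}f_{p_i}$, which cancels exactly against the matching term in $\beta\sum_j(x_j-y_j)\frac{df}{dx_j}$. The squared-gradient terms $2F^{ii}(a_i/a)^2$ from Jacobi absorb the cross terms $F^{ii}(\eta_i/\eta)^2$ and $\beta^2F^{ii}h_i^2$ via Cauchy--Schwarz, with the right coefficient split. The $\beta^2 F^{ii}h_i^2=\beta^2F^{ii}(x_i-y_i)^2\lambda_i^2$ term is kept as a good term.

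The result is a maximum principle inequality of the form
\begin{equation*}
0\ge c\beta^2\sum_iF^{ii}\lambda_i^2(x_i-y_i)^2 - C\beta - C\sum_iF^{ii}\Big(\frac{|\eta_i|^2}{\eta^2}+\frac{|\eta_{ii}|}{\eta}\Big)-C,
\end{equation*}
valid at $x_0$.

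For $k=n-1$, $\sum F^{ii}\le C$ because $F^{ii}=F^2/\lambda_i^2\le F^2/\lambda_n^2\le C$ by Lemma \ref{lu_eigenvalue_bounds}. The inequality then yields $\eta(x_0)\le C$ after multiplying through by $\eta^2$. Hence $\eta a e^{\beta h}\le C$ on $B_R$.

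For $k=n-2$ the trace can be unbounded. Here we show $F^{ii}\lambda_i^2\ge c\sum_kF^{kk}$ for each $i$. By Lemma \ref{n-2properties}, $F^{ii}\lambda_i^2=F^2\sum_{b\ne i}1/\lambda_b$. We split into two cases. If $1/\lambda_i\le \frac12\sum_j 1/\lambda_j$, this is $\ge \frac{F^2}{2}\sum_j1/\lambda_j$. Otherwise $\lambda_i$ is the smallest eigenvalue, and $\sum_{b\neq i}1/\lambda_b\ge 1/\lambda_{n-1}\ge c$ by Lemma \ref{lu_eigenvalue_bounds}. In both cases Lemma \ref{n-2_trace_upperbound} combined with the trace lower bound (the analogue of Lemma \ref{general_trace_lower_bound}, which gives $\sum F^{ii}\ge c\sqrt f$) yields the comparison. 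We then divide the whole inequality by $\sum F^{ii}$. The constant error terms become $C/\sqrt f\le C$, and the argument closes as before.

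To compare values, note that $\varphi(y)=\eta(y)a(y)$ is controlled by $\sup_{B_r(y)}\lambda_1$. Away from $y$ we use the dichotomy $|x_0-y|\ge r$ versus $<r$. Inside $B_r$ the bound is immediate. Outside, the good term $\beta^2 F^{ii}\lambda_i^2|x_0-y|^2\ge \beta^2 r^2 c\sum F^{kk}$ dominates once $\beta=\beta(r)$ is large, which caps $\varphi(x_0)$ in terms of $r$. The main obstacle is the $k=n-2$ step: obtaining the uniform lower bound of $F^{ii}\lambda_i^2$ by the trace, and arranging the cutoff so that every error term is proportional to $\sum F^{ii}$. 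That is where the case analysis comparing $1/\lambda_i$ with $\sum_j1/\lambda_j$ is spent.
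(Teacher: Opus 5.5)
Several parts of your architecture match the paper's: Shankar's radial test function, $a_m$ touching $a_1$ from below so the classical Jacobi inequality applies, the first-order condition producing a $\sum_i f_{p_i}u_{ii}(x_i-y_i)$ term that cancels the one from $\frac{df}{dx_j}$, and for $k=n-2$ a two-case comparison followed by division by the trace and Lemma~\ref{general_trace_lower_bound}. But there are two genuine gaps.

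\textbf{First gap: the $k=n-2$ comparison.} Your radial function $(x-y)\cdot Du-u+u(y)$ has no quadratic term, so your good term is $\beta^2F^{ii}\lambda_i^2(x_i-y_i)^2$, and you need $F^{ii}\lambda_i^2\ge c\sum_kF^{kk}$ for each $i$. This is false. Take $\lambda_1=\dots=\lambda_{n-1}=L$ and $\lambda_n=\varepsilon$ with $F=1$, so $\varepsilon\approx(n-1)/L$. Then $F^{nn}\lambda_n^2=F^2\frac{n-1}{L}\approx\varepsilon$, while $\sum_kF^{kk}\ge F^{nn}\approx1/\varepsilon$.

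Your Case 2 claim $1/\lambda_{n-1}\ge c$ reads Lemma~\ref{lu_eigenvalue_bounds} backwards: the lemma bounds $\lambda_{n-1}$ from below, hence $1/\lambda_{n-1}$ from above. Even a constant lower bound could not dominate an unbounded trace.

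This is exactly why the paper's radial function is $\varphi=(x-y)\cdot Du-u+u(y)+\frac t2|x-y|^2$. Then $\varphi_i=(x_i-y_i)(\lambda_i+t)$, and in Case 2 one has $F^{ii}(\lambda_i+t)^2\ge t^2F^{ii}=\frac{t^2F^2}{\lambda_i}\sum_{b\ne i}\frac{1}{\lambda_i\lambda_b}\ge\frac{t^2F}{2\lambda_i}\ge\frac{t^2F}{4}\sum_j\frac1{\lambda_j}\ge\frac{t^2}{4}\sum_jF^{jj}$. This uses $\sum_{b\ne i}\frac1{\lambda_i\lambda_b}\ge\frac1{2F}$ and Lemma~\ref{n-2_trace_upperbound}. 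The extra $t\sum_iF^{ii}$ that $\varphi_{ii}$ produces is harmless after dividing by the trace.

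\textbf{Second gap: the cutoff.} You take $\eta=(R^2-|x-y|^2)^+$ and treat its derivatives as errors. The terms $\sum_iF^{ii}\bigl(\eta_i^2/\eta^2+|\eta_{ii}|/\eta\bigr)$, and also $\sum_if_{p_i}\eta_i/\eta$ (which you dropped), blow up as $x_0\to\partial B_R(y)$. Your good term does not, so an interior maximum near the outer edge is not excluded.

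Moreover, once the Jacobi term $2F^{ii}a_i^2/a$ has been spent via the first-order condition, $a$ no longer appears anywhere in your inequality. So ``multiplying by $\eta^2$ yields $\eta(x_0)\le C$, hence $\eta ae^{\beta h}\le C$'' is a non sequitur: nothing carrying $\lambda_1$ is left to cap $a$. The inequality can only localize the maximum point.

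The paper makes the localization work with Korevaar's exponential cutoff built from $\varphi$ itself, $\eta=(e^{(1-\varphi)/h}-1)_+$ with $h$ small. The Jacobi term cancels the cross term $2F^{ii}(a_m)_i\eta_i$ exactly, and $\eta_{ii}$ supplies the good term $(\eta+1)F^{ii}\varphi_i^2/h^2$. Multiplying by $h^2/(\eta+1)$ gives $h\bigl(t\sum_iF^{ii}+C\bigr)\ge\sum_iF^{ii}(x_i-y_i)^2(\lambda_i+t)^2$, with no dependence on $\eta$.

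For $h$ small this excludes an interior maximum of $a_1\eta$ on $\overline{B_{1/2}(y)}\setminus B_r(y)$ with $\lambda_1>K_0$. Since $t$ is chosen so that $\eta$ vanishes on $\partial B_{1/2}(y)$, the maximum lies either on $\partial B_r(y)$ or at a point with $\lambda_1\le K_0$. Either case gives the doubling bound.
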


\begin{proof}
In the proof, $C$ denotes a constant that might change from line to line, depending on $r, n, k, \|u\|_{C^{0,1}(B_2(0))}, \left\|\frac{1}{f}\right\|_{L^\infty}$, and $\|f\|_{C^{1,1}}$.

We consider the cutoff function introduced in Proposition 4.1 of \cite{Shankar2024SLE}.
\begin{equation}
\label{cutoff}
   \eta(x) = \left(e^{\frac{1-\varphi(x)}{h}} - 1\right)_+, 
\end{equation}
where $\varphi(x) = (x-y) \cdot Du(x) - u(x) + u(y) + \frac{t|x-y|^2}{2}$. Here, $t$ is a positive constant to be chosen depending on $n$ and $\|u\|_{C^{0,1}(B_1(0))}$, and $h$ is a positive constant to be chosen depending on $r, n, k, \|u\|_{C^{0,1}(B_2(0))}, \left\|\frac{1}{f}\right\|_{L^\infty}$, and $\|f\|_{C^{1,1}}$.

Observe that $1-\varphi(x) \le C\|u\|_{C^{0,1}(B_1(0))} - \frac{t}{2}|x-y|^2$. There exists a sufficiently large constant $t \ge C(n,\|u\|_{C^{0,1}(B_1(0))})$ such that $1- \varphi(x) < 0$ for any $|x-y| \geq \frac{1}{2}$. We assume $t\ge 1$.

Furthermore, as $x \rightarrow y$, $\varphi(x) \rightarrow 0$. Hence, there exists a sufficiently small constant $R = C(n,t,\|u\|_{C^{0,1}(B_1(0))}) = C(n,\|u\|_{C^{0,1}(B_1(0))})$ such that if $|x-y|<R$, then $\eta(x) > 0$.
Moreover, $\inf_{x \in B_R(y)} \eta(x) \geq C$ and $\sup_{x \in B_R(y)} \eta(x) \le C$. Now, we fix $R > 0$ and let $r \le R$.

Consider the maximum point $x_0$ of $a_1(x)\eta(x)$ in $\overline{B_\frac{1}{2}(y)} \setminus B_r(y)$. There are three cases.  Case 1. The maximum point occurs in the interior and $\lambda_1(x_0) \leq K_0$, where $K_0$ is the constant in Proposition \ref{jacobi}. Case 2. The maximum point occurs in the interior and $\lambda_1(x_0) > K_0$. Case 3. The maximum point occurs on the boundary. We will show that if $h$ is sufficiently small, then case 2 is not possible. Case 1 and case 3 each lead to the desired doubling inequality.

Case 1. $\lambda_1(x_0) \leq K_0$. The Jacobi inequality in Proposition \ref{jacobi} does not apply in this case. However, $|D^2u(x_0)|$ is already bounded. Note that we have an upper bound and a lower bound for $\eta$. We obtain the desired bound.
    \begin{equation}
    \label{doublinga}
        \begin{split}
        \sup_{B_{R}(y)}a_1(x) & \leq \sup_{B_{R}(y)\backslash B_r(y)}a_1(x) + \sup_{B_{r}(y)}a_1(x) \\
        &\leq C \sup_{B_{R}(y)\backslash B_r(y)}\eta(x)a_1(x) + \sup_{B_{r}(y)}a_1(x) \\
        &\leq C \sup_{B_{R}(y)\backslash B_r(y)}\eta(x)K_0 +  \sup_{B_r(y)} a_1(x) \\
        &\leq C+ \sup_{B_r(y)} a_1(x), 
        \end{split}
    \end{equation}
Notice that the constant $C$ depends on the constant $h$, but $h$ will be fixed in case 2 and the choice of $h$ depends only on $r, n, k, \|u\|_{C^{0,1}(B_2(0))}, \left\|\frac{1}{f}\right\|_{L^\infty}$, and $\|f\|_{C^{1,1}}$.

Case 2. $\lambda_1(x_0) \ge K_0$. The Jacobi inequality holds. Suppose the multiplicity of the largest eigenvalue of $D^2u$ at $x_0$ is $m$. We first claim that $\eta(x_0) > 0$. If $\eta(x_0) = 0$, then $a_1(x_0)\eta(x_0)= 0$, which is impossible because $\eta(x) >0$ for $|x-y| \le R$ and we always have $a_1(x)>0$.

Note that $a_1(x)$ may not be smooth near $x_0$, but we have
\begin{equation*}
    a_m(x_0) \eta(x_0) = a_1(x_0)\eta(x_0) \geq a_1(x)\eta(x) \geq a_m(x) \eta(x)
\end{equation*}
in a neighborhood of $x_0$, with equality at $x_0$. Hence, the function $a_m(x) \eta(x)$ attains its local maximum at $x_0$. By Proposition \ref{jacobi}, $a_m$ is smooth near $x_0$. Also, $\eta$ is smooth near $x_0$ since $\eta(x_0) >0$.

From now on, unless otherwise specified, all computations are evaluated at $x_0$, and we assume $D^2u(x_0)$ is diagonalized.

Since $\nabla (a_m \eta) = 0$ at the maximum point, we have
\begin{equation}
\label{maxprin_1st_der}
 \eta_i = -\frac{\eta}{a_m}(a_m)_i.  
\end{equation}

Contracting $F^{ii}$ with $(a_m\eta)_{ii}$ yields
\begin{align*}
0 &\geq \sum_i F^{ii}(a_m \eta)_{ii} \\
  &= \sum_i \left(F^{ii}(a_m)_{ii} \eta + 2 F^{ii}(a_m)_i \eta_i + F^{ii} a_m \eta_{ii} \right)\\
   &= \sum_i \left( F^{ii}(a_m)_{ii} \eta - 2 F^{ii}\frac{\eta}{a_m}(a_m)_i^2 + F^{ii} a_m \eta_{ii} \right) \\
     &= \sum_i F^{ii} \left( (a_m)_{ii} - 2\frac{(a_m)_i^2}{a_m} \right)\eta +\sum_i  F^{ii} a_m \eta_{ii} \\
     &\ge  \sum_i f_{p_i} (a_m)_i \eta - C a_m \eta + \sum_i  F^{ii}a_m \eta_{ii}\\
     &= -\sum_i f_{p_i} a_m\eta_i - C a_m \eta + \sum_i  F^{ii}a_m \eta_{ii},
\end{align*}
where we substitute (\ref{maxprin_1st_der}) in the second equality, use the Jacobi inequality in Proposition \ref{jacobi} in the penultimate step, and use (\ref{maxprin_1st_der}) again in the last step. 
Because $a_m > 0$, we have
\begin{equation}
\label{eq_4_5}
    0 \ge -\sum_i f_{p_i} \eta_i - C (\eta+1) + \sum_i F^{ii}\eta_{ii}.
\end{equation}
By the definition of $\eta$, we have
\begin{equation}
\label{eq_4_6}
    \eta_i = -(\eta+1) \frac{\varphi_i}{h},
\end{equation}
and
\begin{equation}
    \label{eq_4_7}
    \eta_{ii} = -(\eta+1) \frac{\varphi_{ii}}{h}  + (\eta+1) \frac{(\varphi_{i})^2}{h^2}.
\end{equation}

Substituting (\ref{eq_4_6}) and (\ref{eq_4_7}) into (\ref{eq_4_5}) and multiplying both sides by $\frac{h^2}{\eta+1}$, we have
\begin{equation}
\label{eq_4_8}
    Ch^2+h (\sum_i F^{ii} \varphi_{ii}) \ge \sum_i F^{ii} \varphi_i^2+ h  \sum_i f_{p_i}\varphi_i.
\end{equation} 

Differentiating $\varphi(x)$, we obtain
\begin{align*}
\varphi_i &= u_i + \sum_j (x_j-y_j) u_{ij} - u_i + t (x_i-y_i) = \sum_j (x_j-y_j) u_{ij} + t (x_i-y_i).
\end{align*}
Because $D^2u$ is diagonalized at $x_0$, this simplifies to
\begin{equation}
\label{phi_d}
    \varphi_i = (x_i-y_i) (u_{ii} + t).
\end{equation}
Hence, we have
\begin{equation}
\label{phi_d_square}
    \varphi_i^2 = (x_i-y_i)^2 (u_{ii} + t)^2.
\end{equation}

The second derivative is
\begin{equation*} 
\varphi_{ii} = u_{ii} + \sum_j (x_j-y_j) u_{iij} + t. 
\end{equation*}

Contracting with $F^{ii}$ and employing the identity 
\begin{equation}
    \sum_i F^{ii} u_{iij} = \frac{d}{d x_j} f(x,u,Du) = f_{x_j} + f_z u_j + \sum_k f_{p_k}u_{kj},
\end{equation}
we obtain
\begin{equation}
\label{phi_dd}
\begin{split}
\sum_i F^{ii} \varphi_{ii} &\le  (n-k)f + t \sum_i F^{ii}+ \sum_i f_{p_i}u_{ii}(x_i-y_i)+ C \\
&\leq t \sum_i F^{ii}+ \sum_i f_{p_i}u_{ii}(x_i-y_i)+ C.
\end{split}
\end{equation}

Substituting (\ref{phi_d_square}) and (\ref{phi_dd}) into the maximum principle inequality (\ref{eq_4_8}) and assuming $h \leq 1$, we have
\begin{equation}
\label{max_prin_inequality_1}
\begin{split}
    &h\left(  t \sum_i F^{ii}+\sum_i f_{p_i}u_{ii}(x_i-y_i)+ C \right) \\&\geq \sum_i F^{ii} (x_i-y_i)^2 (u_{ii} + t)^2+h \sum_i (x_i-y_i) u_{ii}f_{p_i}-Ch.
\end{split}
\end{equation}
The terms involving $f_{p_i}$ are identical, so they can be canceled.
\begin{equation}
\label{max_prin_inequality}
\begin{split}
    &h\left( t \sum_i F^{ii}+ C \right) \geq \sum_i F^{ii} (x_i-y_i)^2 (u_{ii} + t)^2.
\end{split}
\end{equation}

We will now demonstrate that if $h$ is chosen to be sufficiently small depending on $r,n, k, \|u\|_{C^{0,1}(B_2(0))}, \left\|\frac{1}{f}\right\|_{L^\infty}$, and $\|f\|_{C^{1,1}}$, then $|x_0-y|<r$, contradicting the fact that $x_0 \in B_\frac{1}{2}(y) \setminus B_r(y)$. We divide the proof into two subcases depending on $k$. 

Subcase $k = n-1$. By Lemma \ref{n-1properties}, we have $F^{ii} = \frac{F^2}{\lambda_i^2}= \frac{f^2}{\lambda_i^2}$ and $\sum_i \frac{f}{\lambda_i} =\sum_i \frac{F}{\lambda_i} = 1$. The maximum principle inequality (\ref{max_prin_inequality}) becomes
\begin{equation*}
     h \left( C + t \sum_i \frac{f^2}{\lambda_i^2} \right) \ge f^2 \sum_i \frac{(x_i-y_i)^2}{\lambda_i^2} (\lambda_i + t)^2. 
\end{equation*}
Because $\sum_i \frac{f^2}{\lambda_i^2} < \left(\sum_i \frac{f}{\lambda_i}\right)^2 = 1$, the left hand side is bounded by $h(C+t)$. 
On the right hand side, since $t > 0$ and $\lambda_i > 0$, we have
\begin{equation*} 
f^2\sum_i (x_i-y_i)^2 \left( 1 + \frac{t}{\lambda_i} \right)^2 \ge f^2 \sum_i (x_i-y_i)^2 = f^2|x-y|^2. 
\end{equation*}
Therefore, $f^2|x-y|^2 \le h(C+t)$. By choosing $h$ sufficiently small depending on $r,n, k, \|u\|_{C^{0,1}(B_2(0))}, \left\|\frac{1}{f}\right\|_{L^\infty}$, we force $|x-y| < r$, which implies the maximum cannot occur in the interior of $\overline{B_\frac{1}{2}(y)} \setminus B_r(y)$. A contradiction arises. 

Subcase $k = n-2$. By Lemma \ref{n-2properties}, the gradient term expands as
\begin{align*}
\sum_i F^{ii} (x_i-y_i)^2 (u_{ii} + t)^2 &= F^2\sum_i \frac{(x_i-y_i)^2}{\lambda_i^2} \left(\sum_{j \neq i}\frac{1}{\lambda_j}\right)  (\lambda_i + t)^2 \\
&= F^2\sum_i (x_i-y_i)^2 \left( \sum_{j \neq i} \frac{1}{\lambda_j} \right) \left( 1 + \frac{t}{\lambda_i} \right)^2.
\end{align*}
The maximum principle inequality (\ref{max_prin_inequality}) thus becomes
\begin{equation}
\label{n-2maxprin}
   h \left(C + t \sum_k F^{kk} \right) \ge F^2\sum_i (x_i-y_i)^2 \left( \sum_{j \neq i} \frac{1}{\lambda_j} \right) \left( 1 + \frac{t}{\lambda_i} \right)^2.
\end{equation}

Claim. For each $i$, $\left( \sum_{j \neq i} \frac{1}{\lambda_j} \right) \left( 1 + \frac{t}{\lambda_i} \right)^2 \ge C \left( \sum_k F^{kk} \right)$.

Subcase 1. $\frac{1}{\lambda_i} \le \frac{1}{2} \sum_j \frac{1}{\lambda_j}$. In this case, we have
\begin{align*}
\sum_{k \neq i} \frac{1}{\lambda_k} &= \left( \sum_k \frac{1}{\lambda_k} \right) - \frac{1}{\lambda_i} \ge \frac{1}{2} \sum_k \frac{1}{\lambda_k}.
\end{align*}
Thus, since $t, \lambda_i > 0$, 
\begin{equation*} 
\left( \sum_{j \neq i} \frac{1}{\lambda_j} \right) \left( 1 + \frac{t}{\lambda_i} \right)^2 \ge \frac{1}{2} \sum_k \frac{1}{\lambda_k} \ge C \sum_k F^{kk},
\end{equation*}
where we use Lemma \ref{n-2_trace_upperbound} in the last inequality.

Subcase 2. $\frac{1}{\lambda_i} > \frac{1}{2} \sum_j \frac{1}{\lambda_j}$. We have
\begin{equation} 
\label{4_9}
\sum_{k \neq i} \frac{1}{\lambda_i \lambda_k} \ge \frac{1}{2} \sum_{k \neq i} \frac{1}{\lambda_k} \sum_j \frac{1}{\lambda_j} \ge \frac{1}{2F},
\end{equation}
where we use the assumption $\frac{1}{\lambda_i} > \frac{1}{2} \sum_j \frac{1}{\lambda_j}$ in the first inequality and the equality $\sum_{j < k} \frac{1}{\lambda_j \lambda_k} = 1/F$ in the second inequality.

Thus,
\begin{align*}
\left( \sum_{k \neq i} \frac{1}{\lambda_k} \right) \left( 1 + \frac{t}{\lambda_i} \right)^2 &\ge \left( \sum_{k \neq i} \frac{1}{\lambda_k} \right) \frac{t^2}{\lambda_i^2} = \frac{t^2}{\lambda_i} \sum_{k \neq i} \frac{1}{\lambda_k \lambda_i} \\
&\ge \frac{t^2}{2F \lambda_i} \ge \frac{t^2}{4F} \sum_j \frac{1}{\lambda_j} \ge C\sum_j\frac{1}{\lambda_j} \ge C \sum_j F^{jj},
\end{align*}
where we use (\ref{4_9}) and the condition $\frac{1}{\lambda_i} > \frac{1}{2} \sum_j \frac{1}{\lambda_j}$ and Lemma \ref{n-2_trace_upperbound}.

Substituting this lower bound into the maximum principle inequality (\ref{n-2maxprin}) and dividing both sides by $\sum_j F^{jj}$, we have
\begin{align*}
h \left( \frac{C}{\sum_j F^{jj}} + t \right) &\ge C \left( \sum_i (x_i-y_i)^2 \right).
\end{align*}
Using the lower bound for the trace $\sum_j F^{jj}$ by Lemma \ref{general_trace_lower_bound}, we obtain
\begin{equation*} 
|x-y|^2 \le Ch.
\end{equation*}
By choosing $h$ sufficiently small depending on $r,n,k, t,\|u\|_{C^{0,1}(B_1(0))},\|\frac{1}{f}\|_{L^\infty}$ and $\|f\|_{C^{1,1}}$, we obtain $|x-y|^2 \le r^2$, ensuring that case 2 cannot happen. 

Case 3. $x_0$ lies on the boundary of $B_{\frac{1}{2}}(y) \backslash B_{r}(y) $. Since $\eta(x) = 0 $ on $\partial B_{1/2}(y)$ and $\eta(x)a_1(x)>0$ if $|x-y|\le R$, we have $x_0 \in \partial B_r(y)$. Hence,
\begin{equation}
\label{doublingb}
    \begin{split}
    \sup_{B_{R}(y)}a_1(x) & \leq \sup_{B_{R}(y)\backslash B_r(y)}a_1(x) + \sup_{B_{r}(y)}a_1(x) \\
    &\leq C  \sup_{B_{R}(y)\backslash B_r(y)}\eta(x)a_1(x) + \sup_{B_{r}(y)}a_1(x) \\
    &\le  C \sup_{B_r(y)}\eta(x)a_1(x) + \sup_{B_{r}(y)}a_1(x)\\
    &\leq C \sup_{B_r(y)} a_1(x) +  \sup_{B_r(y)} a_1(x) \\
    &\leq C  \sup_{B_r(y)} a_1(x). 
    \end{split}
\end{equation}
Hence, either (\ref{doublinga}) or (\ref{doublingb}) holds. We can obtain the desired doubling inequality by rewriting the inequality in terms of $\lambda_1$ and simplifying it.
\end{proof}

Now, we establish the doubling inequality for general $l$.
\begin{proposition}[Doubling inequality]
\label{doubling_general_l}
Let $n \ge 2$, $1 \le k < l \le n$ with $k \in \{l-1, l-2\}$, and let $f(x,u,Du)$ be a positive $C^{1,1}$ function satisfying the structural inequality $\sum_{i,j} f_{p_i p_j} \xi_i \xi_j \ge \frac{K_1}{f} \left(\sum_i f_{p_i} \xi_i\right)^2$ for all $\xi \in \mathbb{R}^n$. Suppose Assumption \ref{assumption1.1} holds and $u \in C^4(B_{2}(0))$ is a strictly convex solution to \eqref{main_equation_2} on $B_2(0)$. Then, for any point $y \in B_1(0)$, there exists a sufficiently small constant $R(n, l, \|u\|_{C^{0,1}(B_1(0))}) > 0$ such that for any $R > r > 0$, we have
\begin{equation}
\begin{split}
     \sup_{B_R(y)} \lambda_1(D^2u)  &\leq C \left(r, n, l, \|u\|_{C^{0,1}(B_2(0))}, \left\|\frac{1}{f}\right\|_{L^\infty}, \|f\|_{C^{1,1}},\sup_{B_r(y)} \lambda_1(D^2u)\right).
\end{split}
\end{equation}
\end{proposition}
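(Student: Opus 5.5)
We follow the proof of Proposition \ref{doubling} verbatim up to the maximum principle inequality \eqref{max_prin_inequality}, replacing Proposition \ref{jacobi} by Proposition \ref{jacobi_general_l_k}. Concretely, we use the same cutoff $\eta$ from \eqref{cutoff}, with $\varphi(x)=(x-y)\cdot Du-u+u(y)+\frac t2|x-y|^2$, and choose $t$ and $R$ exactly as before. We then maximize $a_1\eta$ over $\overline{B_{1/2}(y)}\setminus B_r(y)$ and split into the same three cases. Cases 1 and 3 go through unchanged. In Case 2 the Jacobi inequality \eqref{a_jacobi_2} applies to the smooth function $a_m$. The gradient condition at the maximum point again converts $\sum_i f_{p_i}(a_m)_i$ into $-\sum_i f_{p_i}a_m\eta_i$. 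This produces the term $h\sum_i(x_i-y_i)u_{ii}f_{p_i}$, which cancels against the same term coming from $\sum_iF^{ii}\varphi_{ii}$ via $\sum_i F^{ii}u_{iij}=\frac{d}{dx_j}f$. The only change in that step is that $\sum_iF^{ii}u_{ii}=(l-k)F\le C$, which holds by homogeneity. We therefore again arrive at
\[
h\Big(t\sum_iF^{ii}+C\Big)\ge\sum_iF^{ii}(x_i-y_i)^2(u_{ii}+t)^2 .
\]

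For $k=l-1$, we show $\sum_iF^{ii}\le C$. By the trace formula in the proof of Lemma \ref{general_trace_lower_bound}, $\sum_iF^{ii}\le (n-l+1)\sigma_{l-1}/\sigma_{l-1}=n-l+1$, so the left side is at most $h(C+t)$. For the right side we need a lower bound $F^{ii}(u_{ii}+t)^2\ge c>0$ for every $i$. Since $t\ge1$, it suffices to bound $F^{ii}(1+\lambda_i)^2$ from below. This is the estimate of Lu and Tsai: $F^{ii}(1+\lambda_i)^2$ is comparable to a constant in the $k=l-1$ case. One can also check it directly. For $i\le l-1$, the quantity $F^{ii}\lambda_i^2$ is comparable to $F$, using Lemma \ref{lem:2.3} and the expansion of $\sigma_l^{ii}\sigma_k-\sigma_l\sigma_k^{ii}$ in terms of $\lambda_i$. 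For $i\ge l$, $F^{ii}$ itself is bounded below since $\lambda_i\le C$. Then $c|x-y|^2\le h(C+t)$, and choosing $h$ small forces $|x_0-y|<r$, a contradiction.

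For $k=l-2$ we imitate the $k=n-2$ subcase. The aim is to show that each coefficient $F^{ii}(u_{ii}+t)^2$ is bounded below by $c\sum_jF^{jj}$. We then divide by $\sum_jF^{jj}$ and use Lemma \ref{general_trace_lower_bound} to bound $C/\sum_jF^{jj}\le C$, which gives $|x-y|^2\le Ch$. The key input is the Lu--Tsai two-sided bound $c^{-1}\lambda_{l-1}\le F^{ii}(1+\lambda_i)^2\le c\lambda_{l-1}$, taken from their estimates together with Lemma \ref{lem:2.4}. Since $t\ge1$, this gives $F^{ii}(u_{ii}+t)^2\ge c\lambda_{l-1}$. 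It remains to show $\sum_jF^{jj}\le C\lambda_{l-1}$, for which we sum the upper bound:
\[
\sum_jF^{jj}\le \sum_j F^{jj}(1+\lambda_j)^2\le nc\,\lambda_{l-1}.
\]
Alternatively, one can use the trace formula $\sum_jF^{jj}\le(n-l+1)\sigma_{l-1}/\sigma_{l-2}$ together with $\sigma_{l-1}/\sigma_{l-2}\lesssim\lambda_{l-1}$. The latter follows from the dominant-term structure $\sigma_{l-1}\approx\lambda_1\cdots\lambda_{l-1}$ and $\sigma_{l-2}\gtrsim\lambda_1\cdots\lambda_{l-2}$.

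The main obstacle is justifying the comparability $F^{ii}(1+\lambda_i)^2\sim\lambda_{l-1}$ uniformly in $i$, in the regime where $\lambda_1,\dots,\lambda_{l-1}$ may all be large and unbounded while $\lambda_l,\dots,\lambda_n$ are bounded. If the Lu--Tsai statement cannot be quoted directly, we would prove it by expanding $F^{ii}\sigma_k^2=\sigma_{l-1}(\lambda|\lambda_i)\sigma_k-\sigma_l\sigma_{k-1}(\lambda|\lambda_i)$, exactly as was done with the $T_j$ in Proposition \ref{jacobi_general_l_k}. We would then isolate the leading monomial and bound the negative terms using $\lambda_l\le C/\sqrt F$ and $\lambda_{l-1}\ge c/\sqrt F$ from Lemma \ref{lem:2.4}, treating $i\le l-1$ and $i\ge l$ separately. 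Once this is in place, Cases 1 and 3 yield the doubling bound exactly as in \eqref{doublinga} and \eqref{doublingb}.
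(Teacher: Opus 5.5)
Your proposal is correct and follows essentially the same route as the paper: you reduce to \eqref{max_prin_inequality} and then use Lu--Tsai's two-sided bounds, namely $F^{ii}(1+u_{ii})^2\sim 1$ for $k=l-1$ and $F^{ii}(1+u_{ii})^2\sim\lambda_{l-1}$ for $k=l-2$, dividing by the trace and invoking Lemma \ref{general_trace_lower_bound} in the latter case. The differences are cosmetic: for $k=l-1$ you bound $\sum_iF^{ii}\le n-l+1$ via the trace formula instead of summing the upper half of Lu--Tsai's estimate, and in your optional direct check the lower bound on $F^{ii}$ for $i\ge l$ really comes from Newton's inequality together with the eigenvalue ordering, not from $\lambda_i\le C$ alone.
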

\begin{proof}
The proof is highly similar to that of Proposition \ref{doubling}. It suffices to show that case 2 cannot happen. The constant $n-k$ in (\ref{phi_dd}) becomes $l-k$, but it does not affect the inequality. We begin with (\ref{max_prin_inequality}). 

For the case of general $l$ and $k = l - 1$, by equation (3.7) in \cite{LuTsai_general_quotient}, we have 
\begin{equation*}
    1/C \leq F^{ii}(1+u_{ii})^2 \leq C.
\end{equation*}
A simple computation yields
\begin{equation*}
    1/C \leq F^{ii}(t+u_{ii})^2 \leq C.
\end{equation*}
In particular, we have
\begin{equation*}
    \sum_i F^{ii}  \leq \sum_i F^{ii}(t+u_{ii})^2 \leq C.
\end{equation*}
Hence, (\ref{max_prin_inequality}) becomes 
\begin{equation*}
    Ch \geq C|x-y|^2.
\end{equation*}
Choosing $h$ sufficiently small, we show that the maximum point cannot be an interior point.

For the case of general $l$ and $k= l-2$, by equation (3.10) in \cite{LuTsai_general_quotient}, we have
\begin{equation*}
    \frac{\lambda_{l-1}}{C} \le F^{ii}(1+u_{ii})^2 \le C \lambda_{l-1}.
\end{equation*}
A simple computation yields
\begin{equation*}
    \frac{\lambda_{l-1}}{C} \le F^{ii}(t+u_{ii})^2 \le C \lambda_{l-1}.
\end{equation*}
In particular, we have
\begin{equation*}
    C\sum_j F^{jj} \le \sum_j F^{jj}(t+u_{jj})^2 \leq C\lambda_{l-1} \leq  C  F^{ii}(t+u_{ii})^2.
\end{equation*}
Substituting this lower bound for each $i$ into the right hand side of the maximum principle inequality (\ref{max_prin_inequality}), dividing both sides by $\sum_j F^{jj}$, and using the lower bound of $f$ yields
\begin{align*}
h \left( \frac{C}{\sum_j F^{jj}} + t \right) &\ge C \left( \sum_i (x_i-y_i)^2 \right).
\end{align*}
As in the case $k=n-2$, we use the lower bound for the trace $\sum_j F^{jj}$ by Lemma \ref{general_trace_lower_bound}. By choosing $h$ sufficiently small, we show that the maximum point cannot be an interior point.
\end{proof}

\section{Proof of the Main Theorems}
\label{s5}
We follow the framework of Shankar-Yuan \cite{ShankarYuan2024}, and we adapt the arguments from Section 6 of Fan \cite{fan2026hessian}. We give a sketch of the proof. 
\begin{proof}[Proof of Theorem \ref{main_thm}] 
We prove by contradiction.
Let $u_k$ and $f_k$ be sequences of functions that satisfy the assumptions of Theorem \ref{main_thm} with \\ $\|u_k\|_{C^{0,1}(B_2(0))} + \|f_k\|_{C^{1,1}} \leq A$, $\inf f_k \geq f_0 >0$, and $|D^2u_k(0)| \rightarrow \infty$. Choose a subsequence of $u_k$ that converges uniformly in $B_1(0)$ to a continuous function $u \in C^0(B_1(0))$, and choose a subsequence of $f_k$ that converges to $f$ in $C^{1,\alpha}(B_1(0))$ for any $0<\alpha<1$, for some $f \in C^{1,1}(B_1(0))$. 

Step 1. Partial regularity of the limit. Recall that we assume the functions $u_k$ to be convex. Hence, their uniform limit $u$ is convex as well. By the classical Alexandrov theorem, we choose a point $y \in B_{R/2}(0)$, where $R = R(n,\|u\|_{C^{0,1}(B_1(0))})$ is given by Proposition \ref{doubling}, such that $u$ is twice differentiable at $y$. We may choose a quadratic polynomial $Q$ such that
\begin{equation*}
    \sup_{x\in B_r(y)} |u(x)-Q(x)| \leq \sigma(r) = o(r^2).
\end{equation*}

Step 2. Flattening the error and Savin's stability.  We adapt the arguments from Section 6 Step 2 of Fan \cite{fan2026hessian} to rescale the equation and apply the small perturbation theorem. 

To apply the small perturbation theorem, we consider the rescaled functions
\begin{equation*}
    \tilde{v}_k(x) := \dfrac{1}{r^2}\left(u_k(y+r x) - Q(y+r x) \right), \quad x \in B_1(0).
\end{equation*}

Define
\begin{equation*}
    \tilde{f}_k(x,z,p) := f_k(rx+y, r^2z+Q(rx+y), rp+DQ(rx+y)),
\end{equation*}
and the operator
\begin{equation*}
    G(M,p,z,x) := \dfrac{\sigma_n}{\sigma_k}(D^2 Q + M) - \dfrac{\sigma_n}{\sigma_k}(D^2 Q) - \tilde{f}_k(x,z,p) + \tilde{f}_k(x,0,0).
\end{equation*}
By definition, $G(0,0,0,x) \equiv 0$, and a direct calculation shows that the functions $\tilde{v}_k$ satisfy the equation
\begin{equation}
\label{operator_G_def}
    G(D^2\tilde{v}_k, D\tilde{v}_k, \tilde{v}_k, x) = \tilde{f}_k(x,0,0) - \dfrac{\sigma_n}{\sigma_k}(D^2 Q) \quad \text{in } B_1(0).
\end{equation}
By Lemma \ref{lu_eigenvalue_bounds}, $\sigma_k(D^2u_k) \ge C >0$. Hence, $\sigma_k(D^2Q) \ge C $.
Thus, the operator $G(M,p,z,x)$ is uniformly elliptic and continuously differentiable in a neighborhood of the origin. It is also Lipschitz continuous in $p$ and $z$ (since $f,f_k \in C^{1,1}$), and satisfies $G(0,0,0,x) \equiv 0$. 

Furthermore, by the convergence of $f_k \to f$ in $C^{1,\alpha}$ and the continuity of $f$, the right hand side term $\tilde{f}_k(x,0,0) - \frac{\sigma_n}{\sigma_k}(D^2Q)$ can be made arbitrarily small in $C^{0,\alpha}(B_1(0))$ by choosing $r$ small and $k$ large.

Thus, we can choose a sufficiently small $r = r(\sigma)$ and a sufficiently large $k$ such that both $\|\tilde{v}_k\|_{L^{\infty}(B_1(0))}$ and the right hand side error satisfy the smallness conditions $\delta$ of Theorem \ref{generalized_savin}. Applying Theorem \ref{generalized_savin} to $\tilde{v}_k$ and rescaling back to $u_k$, we obtain a uniform bound.
\begin{equation}
\label{small_ball_error}
    \|u_k\|_{C^{2,\alpha}(B_{r/2}(y))} \le C(n, A, f_0, Q, \sigma).
\end{equation}

Step 3. Doubling to propagate the partial regularity.
By Proposition \ref{doubling}, the choice of $r=r(\sigma)$, and \eqref{small_ball_error}, we have
\begin{equation*}
    \begin{split}
        \sup_{B_{3R/4}(y)}|D^2u_k| &\leq C\left(r, n, A, f_0, \sup_{B_{r/2}(y)}|D^2u_k|\right)\\
        &\leq C(r, n, A, f_0, C(n, A, f_0, Q, \sigma))\\
        &\leq C(n, A, f_0, Q, \sigma).
    \end{split}
\end{equation*}
Since $y \in B_{R/2}(0)$, it follows that $0 \in B_{3R/4}(y)$, and therefore we have
\begin{equation*}
    |D^2u_k(0)| \leq C(n, A, f_0, Q, \sigma).
\end{equation*}
This uniformly bounds the Hessians at the origin, contradicting our assumption that $|D^2u_k(0)| \rightarrow \infty$.
\end{proof}
\begin{proof}[Proof of Theorem \ref{main_thm_general_l_k}]
    The proof is nearly identical to the above, except that we use the doubling inequality given by Proposition \ref{doubling_general_l} and apply Lemma \ref{lem:2.3} and Lemma \ref{lem:2.4} to justify that $G$ (defined in (\ref{operator_G_def})) is well defined near the origin.
\end{proof}

\section{Counterexample}
\label{s6}

\begin{proof}[Proof of Remark \ref{thm:counterexample}]
We first prove the case for $l=n$ and $k=n-1$. Consider the sequence of functions
\begin{equation*}
    w_\varepsilon(y) = \varepsilon y_1^2 + y_1^4 + \sum_{i=2}^n \frac{1}{2}y_i^2, \quad \text{for } \varepsilon > 0.
\end{equation*}
These functions are smooth and strictly convex, with their Hessians given by
\begin{equation*}
    D^2w_\varepsilon = \mathrm{diag}(12y_1^2 + 2\varepsilon, 1, \dots, 1),
\end{equation*}
and their Laplacians given by
\begin{equation*}
    \Delta w_\varepsilon = 12y_1^2 + 2\varepsilon + n - 1.
\end{equation*}
Define the right hand side functions $f_\varepsilon(p)$ for $p \in \mathbb{R}^n$ as
\begin{equation*}
    f_\varepsilon(p) = \frac{1}{12p_1^2 + 2\varepsilon + n - 1}.
\end{equation*}
For each $\varepsilon$, let $u_\varepsilon(x)$ be the Legendre transform of the convex function $w_\varepsilon(y)$.
We have
\begin{equation*}
\begin{split}
 x &= Dw_\varepsilon(y) \\
  u_\varepsilon(x) &= \sum_i x_i y_i - w_\varepsilon(y) \\
 y &= Du_\varepsilon(x) \\
D^2u_\varepsilon(x) &= (D^2w_\varepsilon(y))^{-1}.
\end{split}
\end{equation*}
By the relation $x = Dw_\varepsilon(y)$, we have $x_1 = 2\varepsilon y_1+4y_1^3$ and
$x_i = y_i$ for $i =2, \dots, n$.
We may solve for $y_1$ in terms of $x_1$ using Cardano's formula
\begin{equation}
\label{y_formula}
    y_1(x_1) = \sqrt[3]{\frac{x_1}{8}+\sqrt{\frac{x_1^2}{64}+\frac{\varepsilon^3}{216}}} + \sqrt[3]{\frac{x_1}{8}-\sqrt{\frac{x_1^2}{64}+\frac{\varepsilon^3}{216}}}.
\end{equation}
Now, we express $u_\varepsilon(x)$ in terms of $x$.
\begin{equation}
\label{u_formula}
\begin{split}
    u_\varepsilon(x) &= x_1y_1(x_1)-\varepsilon y_1^2(x_1)-y_1^4(x_1) + \frac{1}{2}\sum_{i=2}^n x_i^2 \\
    &= \varepsilon y_1^2(x_1) + 3y_1^4(x_1)+ \frac{1}{2}\sum_{i=2}^n x_i^2 ,
\end{split}
\end{equation}
where we use the relation $x_1 = 2\varepsilon y_1+4y_1^3$ in the last equality. It is clear that $u_\varepsilon$ is well defined and smooth on $B_1(0)$. Observing that $Du_\varepsilon(x) = y$, we have
\begin{equation*}
    \frac{\sigma_n}{\sigma_{n-1}}(D^2u_\varepsilon(x)) = \frac{1}{\Delta w_\varepsilon(y)} = f_\varepsilon(Du_\varepsilon(x)) \quad \text{on } B_1(0).
\end{equation*}
Now, we verify properties (i)-(iii).
For property (i), (\ref{u_formula}) and (\ref{y_formula}), together with the fact that $Du_\varepsilon(x) = y$, yield a uniform $C^{0,1}$ bound for $u_\varepsilon$ on $B_1(0)$.
For property (ii), the functions $f_\varepsilon$ are clearly uniformly bounded in $C^{1,1}$ with respect to the $Du$ variable. Furthermore, assuming $\varepsilon \le 1$, the functions $f_\varepsilon$ satisfy the following uniform lower bound
\begin{equation*}
    f_\varepsilon \ge \frac{1}{ 12(\sup |Du_\varepsilon|)^2+ n +1 } > 0.
\end{equation*}
For property (iii), since $Dw_\varepsilon(0) = 0$, we have 
\begin{equation*}
    D^2u_\varepsilon(0) = (D^2w_\varepsilon(0))^{-1} = \mathrm{diag}\left(\frac{1}{2\varepsilon}, 1, \dots, 1\right).
\end{equation*}
As $\varepsilon \to 0$, the largest eigenvalue $\lambda_1 = \frac{1}{2\varepsilon} \to \infty$.

Now, we generalize this construction to the other pairs of $l$ and $k$ stated in the remark. We use the same definitions of $u_\varepsilon(x)$ and $w_\varepsilon(y)$, but modify the definitions of $f_\varepsilon(Du_\varepsilon(x))$. Using the well known identity
\begin{equation*}
    \sigma_l(\lambda) = \sigma_n(\lambda) \sigma_{n-l}\left(\frac{1}{\lambda}\right),
\end{equation*}
we have 
\begin{equation*}
    \frac{\sigma_l}{\sigma_k}(D^2u_\varepsilon(x)) =\frac{\sigma_{n-l}}{\sigma_{n-k}}(D^2w_\varepsilon(y)).
\end{equation*}
A direct computation yields
\begin{equation*}
    \sigma_m(D^2w_\varepsilon(y)) = (2\varepsilon+12y_1^2) \binom{n-1}{m-1}+\binom{n-1}{m}.
\end{equation*}
The quotient equation can be written as 
\begin{equation*}
    \frac{\sigma_{n-l}}{\sigma_{n-k}}(D^2w_\varepsilon(y)) = \dfrac{(2\varepsilon+12y_1^2) \binom{n-1}{n-l-1}+\binom{n-1}{n-l}}{(2\varepsilon+12y_1^2) \binom{n-1}{n-k-1}+\binom{n-1}{n-k}}=:f_\varepsilon(Du_\varepsilon(x)).
\end{equation*}
By our choice of $l$ and $k$, we have $\binom{n-1}{n-k} > 0$. Therefore, the denominators of the functions $f_\varepsilon$ are uniformly bounded away from $0$. Hence, the functions $f_\varepsilon$ do not blow up and have a uniform $C^{1,1}$ bound in the $Du$ variable. Moreover, for $\varepsilon \le 1$, the functions $f_\varepsilon$ admit a uniform lower bound
\begin{equation*}
    f_\varepsilon(p) \ge \dfrac{\binom{n-1}{n-l}}{(2+12M^2) \binom{n-1}{n-k-1}+\binom{n-1}{n-k}}.
\end{equation*}
Hence, property (ii) is satisfied. Since the construction of $u_\varepsilon$ remains entirely unchanged, properties (i) and (iii) hold exactly as before.
\end{proof}
\begin{remark}
    The proof fails for $k=0$, as $\binom{n-1}{n}=0$ would cause the $C^{1,1}$ norm of $f_\varepsilon$ to blow up as $\varepsilon \rightarrow 0$.
\end{remark}

\section*{Acknowledgements}
The author would like to thank Yi Wang for her valuable guidance and supervision, Jin Yan and Marcin Sroka for helpful comments and pointing out relevant references, and Ravi Shankar for helpful conversations and encouraging discussions regarding the doubling argument.

\printbibliography

@article{Fan2026Savin,
  author  = {Fan, Zhenyu},
  title   = {A generalization of Savin's small perturbation theorem for fully nonlinear elliptic equations and applications},
  journal = {Calculus of Variations and Partial Differential Equations},
  volume  = {65},
  pages   = {182},
  year    = {2026}
}

@article{fan2026hessian,
  author  = {Fan, Zhenyu},
  title   = {Hessian estimates for the sigma-2 equation with variable right-hand side terms in dimension 4},
  journal = {Advances in Mathematics},
  volume  = {494},
  pages   = {110953},
  year    = {2026}
}

@article{WangYu2014SLE,
  author  = {Wang, Dake and Yuan, Yu},
  title   = {Hessian estimates for special Lagrangian equations with critical and supercritical phases in general dimensions},
  journal = {American Journal of Mathematics},
  volume  = {136},
  number  = {2},
  pages   = {481--499},
  year    = {2014}
}

@article{Shankar2024SLE,
  author  = {Shankar, Ravi},
  title   = {Hessian estimates for special Lagrangian equation by doubling},
  journal = {Analysis \& PDE},
  volume  = {19},
  number  = {2},
  pages   = {339--352},
  year    = {2026}
}

@article{LuTsai_general_quotient,
  author  = {Lu, Siyuan and Tsai, Yi-Lin},
  title   = {A note on interior $C^2$ estimate for general Hessian quotient equation},
  journal = {Communications on Pure and Applied Analysis},
  volume  = {33},
  pages   = {88--100},
  year    = {2026}
}

@article{Lu2025_top_quotient,
  author  = {Lu, Siyuan},
  title   = {Interior $C^2$ estimate for Hessian quotient equation in general dimension},
  journal = {Annals of PDE},
  volume  = {11},
  number  = {2},
  pages   = {Paper No. 17, 26 pp.},
  year    = {2025}
}

@article{Bombieri1969,
  author  = {Bombieri, Enrico and De Giorgi, Ennio and Miranda, Mario},
  title   = {Una maggiorazione a priori relativa alle ipersuperfici minimali nonparametriche},
  journal = {Archive for Rational Mechanics and Analysis},
  volume  = {32},
  pages   = {255--267},
  year    = {1969}
}

@article{Trudinger1972,
  author  = {Trudinger, Neil S.},
  title   = {A new proof of the interior gradient bound for the minimal surface equation in $n$ dimensions},
  journal = {Proceedings of the National Academy of Sciences of the United States of America},
  volume  = {69},
  number  = {4},
  pages   = {821--823},
  year    = {1972}
}

@article{Korevaar1987,
  author  = {Korevaar, Nicholas J.},
  title   = {A priori interior gradient bounds for solutions to elliptic Weingarten equations},
  journal = {Annales de l'Institut Henri Poincar{\'e} C, Analyse non lin{\'e}aire},
  volume  = {4},
  number  = {5},
  pages   = {405--421},
  year    = {1987}
}

@article{Zhou2021,
  author  = {Zhou, Xingchen},
  title   = {Hessian estimates to special Lagrangian equation on general phases with constraints},
  journal = {Calculus of Variations and Partial Differential Equations},
  volume  = {61},
  number  = {1},
  pages   = {4},
  year    = {2022}
}

@article{Chen2009,
  author  = {Chen, Jingyi and Warren, Micah and Yuan, Yu},
  title   = {A priori estimate for convex solutions to special Lagrangian equations and its application},
  journal = {Communications on Pure and Applied Mathematics},
  volume  = {62},
  number  = {4},
  pages   = {583--595},
  year    = {2009}
}

@article{MichaelSimon1973,
  author  = {Michael, Joseph H. and Simon, Leon M.},
  title   = {Sobolev and mean-value inequalities on generalized submanifolds of $\mathbb{R}^n$},
  journal = {Communications on Pure and Applied Mathematics},
  volume  = {26},
  number  = {3},
  pages   = {361--379},
  year    = {1973}
}

@article{ShankarYuansigma2dim4,
  author    = {Shankar, Ravi and Yuan, Yu},
  title     = {Hessian estimates for the sigma-2 equation in dimension four},
  journal   = {Annals of Mathematics},
  volume    = {201},
  number    = {2},
  pages     = {489--513},
  year      = {2025},
  publisher = {Department of Mathematics of Princeton University}
}

@article{Qiu2024,
  author  = {Qiu, Guohuan},
  title   = {Interior Hessian Estimates for $\sigma_2$ Equations in Dimension Three},
  journal = {Frontiers of Mathematics},
  volume  = {19},
  pages   = {577--598},
  year    = {2024}
}

@article{Savin2007,
  author  = {Savin, Ovidiu},
  title   = {Small perturbation solutions for elliptic equations},
  journal = {Communications in Partial Differential Equations},
  volume  = {32},
  number  = {4},
  pages   = {557--578},
  year    = {2007}
}

@article{GuanQiu2019,
  author    = {Guan, Pengfei and Qiu, Guohuan},
  title     = {Interior {$C^2$} regularity of convex solutions to prescribing scalar curvature equations},
  journal   = {Duke Mathematical Journal},
  volume    = {168},
  number    = {9},
  pages     = {1641--1663},
  year      = {2019},
  publisher = {Duke University Press}
}

@article{ShankarYuan2024,
  author  = {Shankar, Ravi and Yuan, Yu},
  title   = {Regularity for the {M}onge--{A}mp{\`e}re equation by doubling},
  journal = {Mathematische Zeitschrift},
  volume  = {307},
  number  = {2},
  pages   = {34},
  year    = {2024}
}

@article{fung,
  author  = {Fung, Cheuk Yan},
  title   = {Doubling argument of the Hessian estimate for the special Lagrangian equation on general phases with constraints},
  journal = {Journal of Differential Equations},
  volume  = {482},
  pages   = {114687},
  year    = {2026},
}

@article{vis_lemma,
  author    = {Brendle, Simon and Choi, Kyeongsu and Daskalopoulos, Panagiota},
  title     = {Asymptotic behavior of flows by powers of the {G}aussian curvature},
  journal   = {Acta Mathematica},
  volume    = {219},
  number    = {1},
  pages     = {1--16},
  year      = {2017},
  publisher = {International Press of Boston}
}

@article{Zhou2024,
  title={Notes on generalized special Lagrangian equation},
  author={Zhou, Xingchen},
  journal={Calculus of Variations and Partial Differential Equations},
  volume={63},
  number={8},
  pages={197},
  year={2024}
}

@article{JiaoSui2026,
  title={Interior Hessian estimates for Hessian quotient equations in dimension three},
  author={Jiao, Heming and Sui, Zhenan},
  journal={preprint arXiv:2602.14064},
  year={2026}
}

@article{LuSroka2026,
  title={On Liouville's theorem for the Hessian quotient equation $\sigma_2/\sigma_1$},
  author={Lu, Siyuan and Sroka, Marcin},
  journal={preprint arXiv:2602.14946},
  year={2026}
}

@article{MeiYan2026,
  title={Interior $C^2$ estimate for semi-convex solutions to a class of Hessian quotient equations in arbitrary dimensions},
  author={Mei, Xinqun and Yan, Jin},
  journal={preprint arXiv:2604.23349},
  year={2026}
}

@article{BSW25Lagrangian_Largangian_mean_curvature,
  title={Doubling and the two-dimensional critical valued Lagrangian phase},
  author={Bhattacharya, Arunima and Shankar, Ravi and Wall, Jeremy},
  journal={arXiv preprint arXiv:2510.22887},
  year={2025}
}

@article{FanShankar_scalar_curvature_equation,
  title={Interior estimates and regularity for the scalar curvature equation in dimension 4},
  author={Fan, Zhenyu and Shankar, Ravi},
  journal={arXiv preprint arXiv:2608.22748},
  year={2026}
}

@article{Shankar2025Singularities,
  title={Removing singularities for fully nonlinear {PDE}s},
  author={Shankar, Ravi},
  journal={Proceedings of the American Mathematical Society},
  volume={153},
  number={11},
  pages={4743--4751},
  year={2025}
}

@article{Tsai2026concavity,
  author        = {Yi-Lin Tsai},
  title         = {A concavity inequality for {H}essian quotient equations},
  journal       = {arXiv preprint arXiv:2608.16383},
  year          = {2026}
}

@article{LiWu2026concavity,
  author        = {Zhisu Li and Ke Wu},
  title         = {A concavity inequality and interior {$C^2$} estimate for {H}essian quotient equations},
  journal       = {arXiv preprint arXiv:2608.17405},
  year          = {2026}
}

@article{DongZhang2026concavity,
  author        = {Weisong Dong and Ruijia Zhang},
  title         = {Interior {H}essian estimates for {H}essian quotient equations},
  journal       = {arXiv preprint arXiv:2608.19087},
  year          = {2026}
}

@article{GuanSroka2026,
  title={A special concavity property for positive Hessian quotient operators},
  author={Guan, Pengfei and Sroka, Marcin},
  journal={Discrete and Continuous Dynamical Systems},
  volume={54},
  pages={50--60},
  year={2026}
}
\end{document}